\documentclass[11pt]{article}
\usepackage[utf8]{inputenc}
\usepackage[english]{babel}
\usepackage{amsmath,amsfonts,amssymb,amsthm,mathtools, enumitem}
\usepackage{tikz-cd, tcolorbox} 
\usepackage[nohead, nomarginpar, margin=0.75in]{geometry}
\usepackage[hidelinks]{hyperref}
\hypersetup{colorlinks,citecolor=black,filecolor=black,
    linkcolor=black,urlcolor=black}
\usepackage{cleveref}

\newtheorem{theorem}{Theorem}[section]

\newtheorem{lemma}{Lemma}
\theoremstyle{definition}
\newtheorem{definition}[theorem]{Definition}
\newtheorem{proposition}[theorem]{Proposition}
\newtheorem{remark}[theorem]{Remark}
\newtheorem{example}[theorem]{Example}

\newcommand\K{\mathbf{K}}
\newcommand\T{\mathbf{T}}
\newcommand\U{\mathbf{U}}
\newcommand\com[1]{}
\newcommand\C{{\mathbb C}}

\newcommand\op[1]{\mathop{\rm #1}\nolimits}
\newcommand\p{\partial}
\newcommand\R{{\mathbb R}}
\newcommand\Z{{\mathbb Z}}

\newcommand\E{\mathcal{E}}
\newcommand\g{{\mathfrak g}}

\title{Differential Invariants of Carrollian Spacetimes}
\author{Boris Kruglikov, Eivind Schneider, Wijnand Steneker}

\date{}

\begin{document}
\maketitle

 \begin{abstract}
We compute invariants of Carrollian spacetimes, deriving them 
from the geometry of the screen bundle. 
For generic Carrollian structures we specify how to generate the entire algebra of
differential invariants, with emphasis on dimension 3, which has special
physical relevance.
Then, in the framework of jet-spaces, we compute the numerology behind these invariants: the Hilbert and Poincar\'e functions
that govern their numbers according to order.
Finally, we compute the Spencer cohomology behind the Carrollian geometry
that, in particular, contains the spaces of intrinsic torsion and intrinsic curvature,
which are fundamental invariants, important in the equivalence problem and symmetry analysis. Thus, we also discuss symmetry sizes of Carrollian spacetimes.
 \end{abstract}

\section{Introduction}

A \textit{Carrollian spacetime} is a triple $(M,g,K)$, where 
$M$ is a $(d+1)$-dimensional manifold and $g$ is a degenerate quadratic form
on $TM$ with kernel spanned by a nonvanishing vector field $K$; we also assume
$g$ nonnegative, i.e.\ $g(X,X)\geq0$ for $X\in TM$.
Degenerate metrics $g$ of rank $d$ arise as the ultra-relativistic limits 
of Lorentzian metrics when the speed of light $c\to0$, that is 
when the null cone of $g$ collapses to a line, and the null vector $K$ provides a time-scale on that line, see \cite{CP,Figueroa2}. We will assume that $d \geq 2$.

In general relativity, Carrollian spacetimes occur as null hypersurfaces with a 
distinguished null vector field. 
An important example is provided by a {\it Killing horizon}, which is 
a hypersurface $M\subset(\hat{M},\hat{g})$ in a Lorentzian manifold of dimension $(d+2)$
with a Killing vector field $K$ that is tangent to $M$ and is null along it. 
In this case $(M,\hat{g}|_M,K|_M)$ is a Carrollian spacetime. If $d\|K\|_g^2 \neq 0$ on $M$, then the hypersurface $M$ determined by the condition $\|K\|_g^2=0$ is a smooth, isolated Killing horizon. However, there exist Lorentzian spacetimes that are foliated by Killing horizons, such as Kundt spacetimes \cite{PLJ,MS}. 


Other examples come from asymptotic geometries \cite{ABL,Gibbons,BMN}.
The event horizons are null hypersurfaces $M\subset\hat{M}$ that often differ 
from Killing horizons. There may be no canonical choice of a null vector 
field $K$ on a null hypersurface. For generic non-expanding horizons, 
\cite{ABL} shows how one can choose $K$ uniquely, up to a constant factor. 
In more general settings, the null vector field can be introduced for generic 
null hypersurfaces via differential invariants: 
if $J$ is such an invariant with $dJ$ non-vanishing on the null line field,
then it trivializes this line field. 

Yet another example of Carrollian structures comes from conformal geometry: 
if $(N,[q])$ is a conformal structure, its bundle of scales 
$M=N\times\R(t)\stackrel{\pi}\to N$ has a natural degenerate quadric 
$g=e^t\pi^*q$ and the time-scale $K=\p_t$. 
The Fefferman--Graham construction realizes $M\subset\hat{M}$ as a null hypersurface
in a vacuum Einstein space on a (finite or infinite, depending on the parity of $d$)
jet-level along $M$ \cite{CG}. 

Invariants of Carrollian spacetimes and, more generally, null hypersurfaces
were extensively discussed in the literature \cite{NR,Duval,Figueroa}.
The classical Cartan--Karlhede approach is not directly applicable 
since these structures have infinite type (and so the corresponding Cartan
bundle is infinite-dimensional). Therefore, additional ingredients (like a choice
of compatible connection or Ehresmann connection) are often introduced 
\cite{Ciambelli,Herfray}.
Others employ extrinsic geometry through the restriction of the Cartan bundle
of $\hat{M}$ to $M$ \cite{Blitz,BMN}. Several papers address the question of 
unique realization of Carrollian structures 
as hypersurfaces in Lorentzian spacetimes $M\subset\hat{M}$, cf.\ \cite{Morand}. 
This is a vast subject of near-horizon geometry \cite{ABL,CST}. 

In this paper, we will pursue the purely intrinsic viewpoint 
and compute differential invariants of Carrollian spacetimes without additional geometric data.
While structures that are close to
integrable are important for applications and are often subject to simpler
computations of differential invariants (for instance, if the intrinsic torsion vanishes), 
we will focus on generic spacetimes. From physical viewpoints, these may
be interpreted through rigged connections and strong gravity distortion \cite{CP}.
In this general setting, we find generators for the entire algebra of 
local differential invariants. For extremal spacetimes and near horizon geometry,
this approach was previously realized in \cite{KS,MS} and we now apply it
to the classification of Carrollian structures.

In this context, the screen bundle $\U = TM/\langle K \rangle$ plays an important role. In particular, the quadratic form $g$, along with the intrinsic torsion $q_1=L_K g$ (and its higher-order analogs $q_l$), 
is a tensor field on $\U$. In section \ref{S2}, we show that,
provided the intrinsic torsion is nondegenerate,
there is a canonical flat connection on $\U$. When $\dim M=3$, it is given by the two conditions 
 \[ 
\nabla g=0, \qquad \nabla q_1 = \alpha \otimes g + \beta \otimes q_1,
 \]
for some 1-forms $\alpha,\beta$ that are uniquely determined, and it extends to the case of general $d$.  
Moreover, since $g$ is nondegenerate on $\U$, we can use it to raise and lower indices for tensors on this bundle. 
This lets us construct a sequence of operator fields $Q_l \in \op{End}(\U)$ from $q_l$. Similar to the classical approach with 
scalar polynomial curvature invariants (SPI) in general relativity,
these give a family of invariants $I_\sigma=\op{tr}(Q_\sigma)$,
enumerated by multi-indices $\sigma=(i_1,\dots,i_m)$ of arbitrary but finite
length, where $Q_\sigma=Q_1^{i_1}\cdots Q_m^{i_m}$.

These scalar (polynomial in jets) invariants generate the entire algebra of 
absolute differential invariants as follows.
Choose $(d+1)$ functionally independent invariants $I^0,\dots,I^d$ from
this family, which means the indicated invariants are functionally independent 
almost everywhere for generic Carrollian data $(g,K)$. 
They can be considered as local coordinates on $M$, and we can
express the Carrollian structure in terms of those:
 \[ 
g=\sum_{i,j=1}^d G_{ij}(I^0,\dots,I^d) dI^i dI^j,\qquad 
K=\sum_{i=0}^d H^i\p_{I^i}.
 \]
Writing $(g,K)$ in these coordinates amounts to bringing the Carrollian structure to normal form, 
thus solving the equivalence problem and resulting in a complete set of generators $(I^i,G_{ij},H^i,\p_{I^i})$ for 
the field of rational differential invariants. 

While this, in principle, solves the equivalence problem and provides us with all differential invariants, the actual expressions are involved. 
Moreover, the generators are not of minimal order,
and they neither indicate the number of differential invariants according to their order nor how to obtain those invariants 
(which turn out to be some complicated syzygies of the generators).
In Section \ref{S3}, we therefore conduct a more detailed analysis of the differential invariants. 
We provide a count of independent differential invariants
(via Hilbert and Poincar\'e functions \cite{BK}), 
and we find a transcendence basis for the field of rational second-order differential invariants when $\dim M=3$. The computations are done in the framework of jet bundles, which we will
briefly overview (within this framework, $I_\sigma$ are functions on
the space of jets, and $dI^i$ are horizontal differentials,
while $\p_{I^i}$ are total derivatives).  

Afterwards, in Section \ref{S4} we compute the Spencer cohomology
of the Carrollian structure (viewed as a $G$-structure). Intrinsic torsion naturally 
takes values in this cohomology, 
and is well studied \cite{Figueroa,Figueroa1}, 
so we discuss the second crucial ingredient, 
the intrinsic curvature. In the literature, the intrinsic curvature is typically defined 
for $G$-structures with vanishing intrinsic torsion. Here we define it also for non-vanishing (more precisely, nondegenerate) intrinsic torsion. 
The intrinsic curvature, like intrinsic torsion, 
is a fundamental invariant and hence is related
to the differential invariants studied in Section \ref{S3}. The nature of this relationship, however, is involved,
especially for generic Carrollian spacetimes.

\section{Geometry of the screen bundle}\label{S2}

A null hypersurface underlying the Carrollian structure is the triple
$(M,g,\K)$, where $\K=\langle K\rangle$
is the line bundle of kernels of the degenerate quadratic form $g$. We consider it intrinsically, without encoding any ambient space, and so the word {\em hypersurface} is nominal (alternative term: null manifold).

The {\em screen bundle} over a null hypersurface is $\U=TM/\K$. (For Bargmannian 
structures, the screen bundle is a bundle over spacetime, cf.\ \cite{BM,Figueroa1,FLTC},
but it is fully justified to have it over the null or Carrollian space as well.)
Note that $g$ is naturally a metric on this bundle $\U$, 
which we denote by the same symbol (the quadratic form on $M$
is recovered from it by pull-back).

\subsection{Intrinsic torsion and higher order endomorphisms} \label{S21}

Define a tensor field on $M$ via the Lie derivative by $q_1=L_Kg$;
this is called {\em intrinsic torsion} in the literature \cite{BM,Figueroa}, 
and we explain this naming in Section \ref{S4}. It is a degenerate quadratic form with the same kernel $\K$
and thus also descends to the screen bundle $\U$. 
Equivalently, we obtain a symmetric endomorphism $Q_1$ of $\U$ 
by raising an index with $g$:
 $$
g(Q_1X,Y)=q_1(X,Y),\qquad X,Y\in\U.
 $$
 \begin{lemma}
The tensor field $q_1$ (or $Q_1$) is the obstruction to $g$ being locally projectable along $K$.
 \end{lemma}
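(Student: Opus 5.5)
We make precise what "locally projectable" means and then show that $L_Kg$ is exactly the infinitesimal obstruction. Say $g$ is locally projectable along $K$ if every point has a neighbourhood $V$ and a submersion $\pi:V\to N$ onto a $d$-dimensional manifold whose fibres are the integral curves of $K$ (the leaves of $\K$), together with a Riemannian metric $h$ on $N$ such that $g|_V=\pi^*h$. Since $K$ is nonvanishing, the flow-box theorem gives local coordinates $(t,x^1,\dots,x^d)$ with $K=\p_t$. Locally the leaf space is $N=\{(x^i)\}$ and $\pi(t,x)=x$. Because $g(K,\cdot)=0$, in these coordinates
 \[
g=\sum_{i,j=1}^d g_{ij}(t,x)\,dx^i dx^j ,
 \]
with $(g_{ij})$ positive definite, since $g$ is nonnegative of rank $d$.

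\textbf{Step 1 (computation of $q_1$).} Since $L_{\p_t}dx^i=d(\p_t x^i)=0$, we get
 \[
q_1=L_Kg=\sum_{i,j=1}^d \p_t g_{ij}(t,x)\,dx^idx^j .
 \]
In particular $q_1(K,\cdot)=0$, which confirms that $q_1$ descends to $\U$. Because $g$ is nondegenerate on $\U$, $Q_1=0$ if and only if $q_1=0$. So it suffices to treat $q_1$.

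\textbf{Step 2 (equivalence).} Suppose $q_1=0$. Then $\p_tg_{ij}=0$, so $g_{ij}=g_{ij}(x)$ and $h=\sum g_{ij}(x)dx^idx^j$ is a Riemannian metric on $N$ with $g=\pi^*h$. Conversely, suppose $g=\pi^*h$ for some local submersion $\pi$ with $\ker d\pi=\K$. Any two such local quotients agree up to a local diffeomorphism of leaf spaces, so we may take $\pi$ to be the flow-box projection above. Then $g_{ij}$ does not depend on $t$, hence $q_1=0$. Coordinate-free, this says $L_K\pi^*h=0$ because $d\pi(K)=0$ and $\pi\circ\phi_s=\pi$ for the flow $\phi_s$ of $K$. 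Finally, projectability depends only on $\K$, not on the scaling of $K$: for $f\neq0$,
 \[
L_{fK}g=f\,L_Kg+df\cdot g(K,\cdot)=fL_Kg ,
 \]
since $g(K,\cdot)=0$. So the vanishing of $q_1$ is a property of the pair $(g,\K)$, as it must be.

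\textbf{Step 3 (local and pointwise interpretation).} To see that $q_1$ measures the failure of projectability and not just its on/off status, use the flow $\phi_s$ of $K$. We have $\frac{d}{ds}\phi_s^*g=\phi_s^*q_1$, so $q_1$ is the infinitesimal variation of the induced metric on the transversal $\U\cong T N$ as one moves along the leaves. The metric is invariant along an open set of leaves exactly where $q_1$ vanishes there.

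There is no serious obstacle. The only delicate points are fixing the definition of projectability, which here means descending to the local leaf space of the line field $\K$, and noting that the $dK$-type term in the Lie derivative disappears because $K$ lies in the kernel of $g$.
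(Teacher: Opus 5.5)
Your proof is correct and is the argument the paper has in mind when it calls the lemma obvious. In flow-box coordinates with $K=\partial_t$ one has $q_1=\partial_t g_{ij}\,dx^i dx^j$, so $q_1=0$ exactly when $g$ is the pullback of a metric on the local leaf space; your extra remarks (invariance under $K\mapsto fK$, and $\frac{d}{ds}\phi_s^*g=\phi_s^*q_1$) agree with the paper's later tensoriality lemma and its stress on the local nature of projectability.
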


This lemma is obvious, however we would like to stress its local nature.
If the foliation of $M$ by $K$ is a global fibration over a manifold $U$, then
$g$ is a pullback of a metric on $U$, and the entire structure is reduced
to a Riemannian metric in dimension $(d-1)$. 

Similarly, if 
$Q_1=\op{diag}(\lambda,\dots,\lambda)$ is proportional to the identity,
then $g$ descends to $U$ as a conformal structure and 
conversely $M$ can be identified as a bundle of scales,
used in the Fefferman--Graham construction for $(U,[g])$, see \cite{CG} for
a tractor description.

 \begin{definition}\label{ndgq1}
We call the intrinsic torsion $q_1$ {\em nondegenerate} if $Q_1$ has a
simple spectrum, i.e.\ is conjugate to $\op{diag}(\lambda_1,\dots,\lambda_d)$
with $\lambda_i\neq\lambda_j$ for $i\neq j$.
We call it {\em strongly nondegenerate} if, in addition, all $\lambda_i\neq0$.
 \end{definition}

From the operator $Q_1$, we derive scalar invariants via traces of its powers 
(beware that for $I_l^k$, contrary to $Q_l^k$, the superscript is not a power):
 $$
I^k_1=\op{tr}(Q_1^k).
 $$
These become dependent for $k>d$ because of the Cayley--Hamilton 
theorem $p_{Q_1}(Q_1)=0$, where 
$p_{Q_1}(\lambda)=\op{det}(Q_1-\lambda\cdot{\bf1})=
(-\lambda)^d+c_{d-1}(-\lambda)^{d-1}+\dots-c_1\lambda+c_0$, whence
$I_1^d=c_{d-1}I_1^{d-1}-\dots+(-1)^dc_1I_1^1-(-1)^dc_0d$ and similarly for $k>d$:
$I_1^{d+1}=c_{d-1}I_1^d-c_{d-2}I_1^{d-1}+\dots+(-1)^dc_1I_1^2-(-1)^dc_0I_1^1
=(c_{d-1}^2-c_{d-2})I_1^{d-2}-\dots+(-1)^d(c_1c_{d-1}-c_0)I_1^1-(-1)^dc_0c_{d-1}d$, etc.
 
Similarly, we get higher order tensors $q_l\in S^2\U^*$ and endomorphisms
$Q_l\in\op{End}(\U)$ via
 $$
q_l= L_K^lg = \underbrace{(L_K\circ\cdots\circ L_K)}_{l\, \mathrm{times}}g,\qquad g(Q_lX,Y)=q_l(X,Y).
 $$
Thus, we obtain higher order invariants
$I_l^k=\op{tr}(Q_l^k)$ for $l>0$, $k<d$.
There are also traces of mixed product, as indicated in the introduction,
and they may give more invariants in addition to $I_l^k$.

Actually, in general, the spectrum of $Q_1$ is simple, so we have $d$ different
eigenvalues $\lambda_1,\dots,\lambda_d$ and corresponding eigenvectors 
$v_1,\dots,v_d$.  We assume that they are normalized $\|v_i\|_g=1$, which defines them up to $\pm$. 
Thus 
$q_l(v_i,v_j)$ give more independent invariants compared to $I_l^k$ for $l>1$.
Note that these invariants are local, as they depend on numeration of 
eigenvalues and express via formulae in radicals.

One way to resolve this problem is by averaging over the action of the relevant symmetry group of the roots 
$S_d\ltimes\Z_2^d$, which yields differential invariants that are rational in jets. In Section \ref{S3.3}, we use another approach to construct global invariants and readily identify syzygies among them.

\subsection{Remark on null hypersurfaces}

For null hypersurfaces, considered intrinsically, in contrast to 
Carrollian spacetimes, we do not have a distinguished null vector field $K$, 
but only its span $\K=\op{Ker}(g)$. One possibility is to 
select a section $K$ of $\K$ via normalization $dI(K)=1$ for some
differential invariant, for instance $I_1^1$, thus obtaining a Carrollian 
structure. Another approach to obtain invariants is as follows.

 \begin{lemma}
Let $(M,g,\K)$ be a null hypersurface. For a null vector field $X\in\Gamma(\K)$ 
the Lie derivative $L_Xg$ is tensorial in $X$, i.e.,\ $L_{fX}g=fL_Xg$
for all $f\in C^{\infty}(M)$.
 \end{lemma}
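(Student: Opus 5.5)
The plan is to compute $L_{fX}g$ directly from the Leibniz rule for Lie derivatives of a $(0,2)$-tensor. The only input is that $X$ lies in the kernel of $g$, i.e.\ $g(X,\cdot)=0$. For arbitrary vector fields $Y,Z$ on $M$ we have
\[
(L_{fX}g)(Y,Z)=fX\bigl(g(Y,Z)\bigr)-g([fX,Y],Z)-g(Y,[fX,Z]).
\]
We then expand the brackets as $[fX,Y]=f[X,Y]-Y(f)X$, and similarly for $Z$.

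Substituting gives
\[
(L_{fX}g)(Y,Z)=f\,(L_Xg)(Y,Z)+Y(f)\,g(X,Z)+Z(f)\,g(Y,X).
\]
The two extra terms vanish because $X\in\Gamma(\K)$ and $\K=\op{Ker}(g)$, so $g(X,\cdot)\equiv0$. Hence $L_{fX}g=fL_Xg$.

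An equivalent coordinate-free route uses the formula $L_{fX}g=fL_Xg+df\odot\iota_Xg$ (symmetrized product), valid for any symmetric $2$-tensor $g$. The second term vanishes since $\iota_Xg=0$.

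There is no real obstacle; the only point to stress is that tensoriality holds exactly because $X$ is null in the strong sense $\iota_Xg=0$, not merely $g(X,X)=0$. As a consequence, $L_Xg$ is a section of $\K^*\otimes S^2T^*M$. It also descends to $S^2\U^*$, since $(L_Xg)(X,\cdot)=-g([X,X],\cdot)+X(g(X,\cdot))-g(X,[X,\cdot])=0$. This presumably is what the paper uses next to build invariants of null hypersurfaces, but it is not needed for the lemma itself.
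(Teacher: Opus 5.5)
Your proof is correct and is the same as the paper's argument. Both expand $(L_{fX}g)(Y,Z)$ using $[fX,Y]=f[X,Y]-Y(f)X$, then discard the extra terms $Y(f)\,g(X,Z)+Z(f)\,g(Y,X)$ because $g(X,\cdot)=0$.
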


 \begin{proof}
In addition to $X$, choose vector fields $Y,Z$. Then we get:
 \begin{equation*}
 \begin{split}
(L_{fX}g)(Y,Z) & = L_{fX}(g(Y,Z)) -g(L_{fX}Y,Z) -g(Y,L_{fX}Z) \\ 
& =  fL_{X}(g(Y,Z)) -g(fL_XY-Y(f)X,Z)  -g(Y,fL_XZ-Z(f)X) = f\cdot(L_Xg)(Y,Z).
 \end{split}
 \end{equation*}
Since $Y,Z$ are arbitrary, the claim follows.
 \end{proof}

Thus the corresponding operators $Q_l\in\op{End}(\U)$ are defined all up to the
same factor. We can extract the invariants of $Q_1$ as either the projective
quantity $[\lambda_1:\dots:\lambda_d]$ or by normalization $\sum|\lambda_i|^2=1$
or $\max|\lambda_i|=1$. Then we get $(d-1)$ invariants of order 1 from this
construction (one less than for Carrollian) but the spectrum of $Q_l$ for $l>1$ 
gives the same type invariants as for Carrollian spacetimes.

Alternatively, we can construct rational invariants via traces as follows:
$\op{tr}(Q_1^2)/\op{tr}(Q_1)^2$, $\op{tr}(Q_1^3)/\op{tr}(Q_1)^3$, \dots,
$\op{tr}(Q_1^d)/\op{tr}(Q_1)^d$, and similarly for $Q_l$ with $l>1$. 

Thus, we see a close relationship between the differential invariants of null hypersurfaces 
and  Carrollian spacetimes, and between the corresponding equivalence problems.

\subsection{A canonical connection on the screen bundle}

Under nondegeneracy assumption, 
we shall define a canonical connection $\nabla:\Gamma(\U)\to\Omega^1(M)\otimes\Gamma(\U)$.

 \begin{proposition}\label{cancon}
Suppose that the intrinsic torsion $q_1$ is nondegenerate. 
Then there exists a unique connection $\nabla$ on $\U$ satisfying 
 \begin{equation}\label{U-connection}
\nabla g=0, \qquad \nabla Q_1 = \sum_{i=0}^{d-1}\alpha_i\otimes Q_1^i
 \end{equation}
for some (uniquely determined) 1-forms $\alpha_i\in\Omega^1(M)$.
 \end{proposition}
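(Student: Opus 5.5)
We work locally in the $g$-orthonormal eigenframe $v_1,\dots,v_d$ of $Q_1$ from Section~\ref{S21}; it is smooth because $Q_1$ has simple spectrum, and it is determined up to signs. A connection on $\U$ is given by its connection forms $\omega_i^j\in\Omega^1(M)$, where $\nabla v_i=\sum_j\omega_i^j\otimes v_j$. The condition $\nabla g=0$ says exactly that $\omega_i^j=-\omega_j^i$, so a metric connection is an $\mathfrak{so}(d)$-valued 1-form.

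Next we compute $\nabla Q_1$ in this frame. Since $Q_1 v_i=\lambda_i v_i$, differentiating gives
\[
(\nabla Q_1)v_i = d\lambda_i\otimes v_i + \sum_j(\lambda_i-\lambda_j)\,\omega_i^j\otimes v_j .
\]
So the diagonal part of $\nabla Q_1$ is $\op{diag}(d\lambda_1,\dots,d\lambda_d)$, and its off-diagonal entries are $(\lambda_i-\lambda_j)\omega_i^j$.

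The right-hand side $\sum_{i=0}^{d-1}\alpha_i\otimes Q_1^i$ is diagonal in this frame, with diagonal entries $\sum_k\alpha_k\lambda_i^k$. The condition \eqref{U-connection} therefore splits into two parts. The off-diagonal equations read $(\lambda_i-\lambda_j)\omega_i^j=0$ for $i\ne j$; by nondegeneracy $\lambda_i\ne\lambda_j$, so they force $\omega_i^j=0$ for $i\ne j$. Skew-symmetry already gives $\omega_i^i=0$, hence $\omega=0$. The diagonal equations read $\sum_k\alpha_k\lambda_i^k=d\lambda_i$ for $i=1,\dots,d$. This is a linear system whose coefficient matrix is the Vandermonde matrix $(\lambda_i^k)$, which is invertible because the eigenvalues are distinct. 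So it has a unique solution $\alpha_0,\dots,\alpha_{d-1}$, and by Cramer's rule these 1-forms are smooth.

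It follows that $\nabla$ is the connection making the normalized eigenframe parallel. This proves existence, since that connection satisfies both conditions. It also proves uniqueness of both $\nabla$ and the $\alpha_i$, since any connection satisfying \eqref{U-connection} must have $\omega=0$ in this frame.

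The remaining step, and the only one needing care, is to check that this local construction is global and frame-independent. The eigenframe is defined only up to the signs $v_i\mapsto\pm v_i$, and possibly up to a reordering of the eigenvalues. Changing signs by constants keeps the frame parallel, so the connection is unchanged; reordering also does not change the connection. Alternatively, uniqueness itself shows that the local connections agree on overlaps, so they patch together to a global $\nabla$. The $\alpha_i$ are symmetric functions of the pairs $(\lambda_i,d\lambda_i)$, so they are independent of the ordering and globally defined as well.

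A byproduct of the construction is that $\nabla$ is flat, since it admits a local parallel frame.
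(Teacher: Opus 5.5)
Your proof is correct. It uses the same two facts as the paper's proof: the Vandermonde matrix $[\lambda_i^k]$ is invertible, and off-diagonal entries can be divided by $\lambda_i-\lambda_j$. The difference is the choice of reference connection.

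The paper starts from an arbitrary metric connection $\nabla^0$ and writes $\nabla=\nabla^0+A$ with $A\in\Omega^1(M,\mathfrak{so}(\U))$. It then solves $[A_X,Q_1]=-\nabla^0_XQ_1+\sum_i\alpha_i(X)Q_1^i$ in the eigenbasis, obtaining $a_{ij}=b_{ij}/(\lambda_i-\lambda_j)$ with $b=\nabla^0Q_1$. You instead take the trivial connection of the normalized eigenframe as the reference. In the paper's notation this makes $b_{ij}=\delta_{ij}\,d\lambda_i$ already diagonal, so the correction is $A=0$, and the $\alpha_i$ are read off from $\sum_k\alpha_k\lambda_i^k=d\lambda_i$.

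Your choice gives more than the paper's version does. It describes $\nabla$ explicitly as the connection for which the orthonormal eigenframe is parallel. Flatness of $\nabla$ then follows at once, whereas the paper proves it as a separate theorem using $d^2I_1^k=0$ and a second Vandermonde argument. Your computation also makes clear that the solvability condition is the vanishing of the whole diagonal of the right-hand side, not just its trace. Indeed, any commutator with a diagonal matrix has zero diagonal, and this is exactly what the choice of the $\alpha_i$ secures.

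The paper's formulation follows the generic gauge-fixing pattern: normalize an arbitrary compatible connection by a tensorial correction. That pattern carries over to settings where no parallel frame is available.
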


\begin{proof}
Let $\nabla^0$ be a connection preserving $g$ on $\U$ (such always exists). 
Any other connection on the bundle $\U$ is given by $\nabla=\nabla^0+A$, 
where $A\in\Omega^1(M,\mathfrak{so}(\U))$. 

Since $Q_1$ is symmetric and $\nabla^0$ is metric compatible, we have that 
$\nabla_X^0Q_1=[b_{ij}]$ is self-adjoint; that is, $b_{ji}=b_{ij}$ 
in an orthonormal basis of $\U$. As such, we choose the normalized eigenbasis 
of $Q_1$, in which $Q_1=\op{diag}(\lambda_1,\dots,\lambda_d)$, whence
$Q_1^k=\op{diag}(\lambda_1^k,\dots,\lambda_d^k)$. 
Note that for any $X\in TM$, $A_X=[a_{ij}]$ is skew-symmetric in such a basis:
$a_{ji}=-a_{ij}$.
The second equation in \eqref{U-connection} becomes
 \begin{equation}\label{U-conn2}
[A_X,Q_1]=-\nabla^0Q_1+\sum_{i=0}^{d-1}\alpha_i(X)\otimes Q_1^i
 \end{equation}
where the summation on the right is represented by the Vandermonde matrix
$[\lambda_j^i]_{d\times d}$ times the column-vector of $\alpha_i(X)$.
By the assumption $\lambda_i\neq\lambda_j$ for $i\neq j$, this 
can achieve any diagonal value. In particular, we can choose 1-forms $\alpha_i$
such that the right hand-side of \eqref{U-conn2} is traceless. 
Note that $\op{tr}([A,B]^k)=0$ for any $k$, so this is a necessary condition
to resolve \eqref{U-conn2}.

The right-hand side of \eqref{U-conn2} is symmetric; denote its entries by 
$-b_{ij}^0=-b_{ij}+\frac1d(\sum b_{kk})\delta_{ij}$.
The left-hand side of \eqref{U-conn2} is also symmetric, since it is a commutator 
of a skew-symmetric and a symmetric matrix. 

Since $Q_1$ is nondegenerate, this can be uniquely solved.
Indeed, the formula $a_{ij}=\frac{b_{ij}^0}{\lambda_i-\lambda_j}$
gives the unique solution to this equation. 
 \end{proof}

 \begin{remark}
The connection $\nabla$ constructed in Proposition \ref{cancon} depends only 
on the null hypersurface structure underlying the Carrollian spacetime. 
Indeed, a change of the null field $K\mapsto f\cdot K$
results in the change $Q_1\mapsto f\cdot Q_1$, so equation \eqref{U-connection}
still holds with modified forms $\alpha_i$. Thus the same $\nabla$, obtained
for $(M,g,K)$, is also a canonical connection for the Carrollian structure $(M,g,f\cdot K)$.
 \end{remark}

The 1-forms $\alpha_i$ can be expressed through the first and second order
differential invariants $I_1^i,I_2^j$. For instance, in the case $d=2$
 $$
\alpha_0=\frac{I_1^2dI_1^1-I_1^1dI_1^2}{2I_1^2-(I_1^1)^2},\quad 
\alpha_1=\frac{dI_1^2-I_1^1dI_1^1}{2I_1^2-(I_1^1)^2}.
 $$
Next, we show that the canonical connection $\nabla$ is actually flat. 

 \begin{theorem}
The curvature $R^{\nabla}\in\Omega^2(M,\mathrm{End}(\U))$
of the connection $\nabla$ on the screen bundle vanishes identically.
 \end{theorem}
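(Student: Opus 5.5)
Since $\nabla g=0$ and $Q_1$ has simple spectrum, I will work in the normalized eigenframe $v_1,\dots,v_d$ of $Q_1$. This frame is smooth locally, after fixing signs, and it is $g$-orthonormal. Write $Q_1v_i=\lambda_iv_i$ and $\nabla v_i=\sum_j\omega_{ji}v_j$, where $\omega$ is $\mathfrak{so}(d)$-valued. The plan is to show that $\omega\equiv0$ in this frame. In that case the frame is parallel, and a connection admitting a parallel frame has vanishing curvature $R^\nabla=d\omega+\omega\wedge\omega=0$.

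The key step is to compute $\nabla Q_1$ directly in this frame. Differentiating $Q_1v_i=\lambda_iv_i$ gives
\[
(\nabla Q_1)v_i=d\lambda_i\, v_i+\sum_j(\lambda_i-\lambda_j)\,\omega_{ji}\,v_j .
\]
So the off-diagonal entries of the matrix of $\nabla Q_1$ are $(\lambda_i-\lambda_j)\omega_{ji}$, and the diagonal entries are $d\lambda_i$. The defining condition \eqref{U-connection} says that $\nabla Q_1=\sum_k\alpha_k\otimes Q_1^k$. This is diagonal in the eigenframe, so the off-diagonal part must vanish. Since $\lambda_i\neq\lambda_j$ for $i\neq j$, this forces $\omega_{ji}=0$ for $i\neq j$. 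Skew-symmetry, coming from $\nabla g=0$ and orthonormality, already gives $\omega_{ii}=0$. Hence $\omega=0$.

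Comparing diagonal entries also yields $d\lambda_i=\sum_k\alpha_k\lambda_i^k$. Because the Vandermonde matrix $[\lambda_i^k]$ is invertible, this identifies the $\alpha_k$ in terms of the $d\lambda_i$. This is consistent with the uniqueness in Proposition \ref{cancon} and with the $d=2$ formulas above. Put differently, $\nabla$ is precisely the connection declaring the eigenframe parallel. One checks that this connection satisfies \eqref{U-connection}, so by uniqueness it is the canonical $\nabla$.

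The only delicate point is that the eigenframe is only local and defined up to signs. This does not matter, because curvature is a local tensor and sign changes $v_i\mapsto\pm v_i$ preserve parallelism. I therefore expect no serious obstacle. The main care needed is in verifying that the diagonal form of $\nabla Q_1$ is exactly what \eqref{U-connection} imposes, which holds because every power $Q_1^k$ is diagonal in the eigenframe.
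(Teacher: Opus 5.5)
Your argument is correct, and it reaches the result more directly than the paper does.

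\textbf{The paper's route.} It works at the level of curvature. It computes $R^\nabla Q_1$ from $d_\nabla^2$ and \eqref{U-connection}, and notes that this is a polynomial in $Q_1$, hence diagonal in the eigenframe. It then uses $d^2I_1^k=0$ and the Vandermonde matrix to kill that diagonal, deduces $[R^\nabla_{X,Y},Q_1]=0$, and concludes that $R^\nabla_{X,Y}$ is diagonal and skew, hence zero.

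\textbf{Your route.} You apply the same dichotomy one derivative earlier, to $\nabla Q_1$ itself. Polynomials in $Q_1$ are diagonal in the eigenframe, while commutators with $Q_1$ have zero diagonal. This forces $\omega\equiv0$.

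\textbf{What each buys.} Your version gives strictly more than flatness. It identifies $\nabla$ as the connection making the orthonormal eigenframe parallel. That description is independent of sign and ordering choices, so it is well defined on the whole nondegenerate locus. The forms $\alpha_k$ come out of $d\lambda_i=\sum_k\alpha_k\lambda_i^k$, and existence and uniqueness in Proposition \ref{cancon} follow at the same stroke. The paper's version frames flatness through the compatibility relations $d^2I_1^k=0$ of the trace invariants, which fits its invariant-theoretic viewpoint. Those relations are automatic, though: $R^\nabla$ acts on $\op{End}(\U)$ by commutators, so each $\op{tr}(R^\nabla Q_1^k)$ is the trace of a commutator. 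The substance of both proofs is therefore the same linear algebra.

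\textbf{A side remark to check.} For $d=2$ your relations $d\lambda_i=\alpha_0+\alpha_1\lambda_i$ do reproduce the displayed $\alpha_1$. However, they give
$\alpha_0=\bigl(I_1^2\,dI_1^1-\tfrac12 I_1^1\,dI_1^2\bigr)/\bigl(2I_1^2-(I_1^1)^2\bigr)$.
For example, with $\lambda_2\equiv0$ one needs $\alpha_0=0$, whereas the displayed formula gives $-d\lambda_1$. So the paper's $\alpha_0$ appears to contain a misprint, and your claim of consistency should be checked rather than asserted. This does not affect your proof.
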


\begin{proof}
The connection $\nabla$ defines an exterior derivative $d_{\nabla}:\Omega^{\bullet}(M, 
\mathrm{End}(\U)) \rightarrow \Omega^{\bullet + 1}(M, \mathrm{End}(\U))$. The curvature 
$R^{\nabla}$ measures the extent to which the vanishing of $d_{\nabla}^2$ fails, more precisely one has $d_{\nabla}^2(\omega) = R^{\nabla} \wedge\, \omega$ for all $\omega \in \Omega^{\bullet}(M, \mathrm{End}(\U))$. 

In view of the defining property \eqref{U-connection} of $\nabla$, we compute that
\begin{equation}
    \begin{split}
        R^\nabla_{X,Y} Q_1 
& = \sum_{i=0}^{d-1} d\alpha_i(X,Y) \cdot Q_1^i + \sum_{i=0}^{d-1}\sum_{j=1}^{d-1} j\, (\alpha_i \wedge \alpha_j)(X,Y) \cdot Q_1^{i+j-1}.
    \end{split}
\end{equation}
We now show that $\nabla$ is flat by imposing the compatibility 
conditions $d^2 I_1^k = 0$ for $k = 1, \dots, d$. Consider the 
orthonormal eigenbasis $v_l$ for $Q_1$ and denote its eigenvalues by $
\lambda_l$. In particular, $R^{\nabla}_{X,Y}Q_1$ is diagonal in this 
eigenbasis. 

Imposing the compatibility conditions $d^2 = 0$ on the trace invariants yields
\begin{equation}
    0 = d^2I_1^k = d^2 \mathrm{tr}(Q_1^k) = \mathrm{tr}(d_{\nabla}^2(Q_1^k)) = \mathrm{tr}(R^{\nabla} Q_1^k).
\end{equation}
The covariant derivative $\nabla Q_1$ is polynomial in $Q_1$, which implies $R^{\nabla} Q_1^k = k\, Q_1^{k-1} R^{\nabla} Q_1$. By using $I_1^k = \mathrm{tr}(Q_1^k) = \sum_l \lambda_l^k$, we obtain for $X,Y \in TM$ that 
\begin{equation}
    0 = k^{-1}\, d^2I_1^k(X,Y) = k^{-1}\, \mathrm{tr}(R_{X,Y}^{\nabla} Q_1^k) = \mathrm{tr}(Q_1^{k-1} R_{X,Y}^{\nabla}Q_1) = \sum_{l = 1}^d \lambda_l^{k-1} g(v_l,(R_{X,Y}^{\nabla}Q_1)(v_l)).
\end{equation}
By considering this equation for all $k = 1, \dots, d$, we get the action of the Vandermonde 
matrix $[\lambda_l^{k-1}]_{d\times d}$ on the diagonal of $R^{\nabla}_{X,Y}Q_1$. Nondegeneracy of the Vandermonde matrix is guaranteed by the assumption that $Q_1$ has simple spectrum. Consequently, the diagonal vanishes, and we obtain that $R_{X,Y}^{\nabla}Q_1 = 0$.

The eigenspaces of $Q_1$ are one-dimensional by simplicity of its spectrum. As $Q_1$ and $R^{\nabla}_{X,Y}$ commute, we have that 
$R^{\nabla}_{X,Y}$ preserves these one-dimensional eigenspaces, and so is 
also diagonal in this eigenbasis. In terms of the orthonormal eigenbasis $v_l$, the matrix $R_{X,Y}^{\nabla}$ is both skew-symmetric and diagonal which implies its vanishing. We conclude that the connection is flat.
\end{proof}

\section{Scalar differential invariants}\label{S3}

In the context of Carrollian structures $(M,g,K)$, differential invariants are functions on $M$ that are defined in terms of the components of $(g,K)$ and their partial derivatives in such a way that their expressions are coordinate-independent. 
Such functions are physically relevant, as they are observer-independent. 
In this section, we introduce another framework for differential invariants through jet-spaces, which allows us to discuss their completeness and independence.

We have already introduced differential invariants  $I_l^i=\op{tr}(Q_l^i)$. While $I_1^1,\dots,I_1^d$ form a transcendence basis for the field of rational differential invariants of first order, the picture is more complicated in higher order. In this section, we investigate differential invariants in more detail. We construct an invariant frame in two different ways; one local and one global. Then we use these to find a maximal set of independent second-order differential invariants, focusing on the most interesting case when $\dim M =3$, and we show that these can be used to generate a maximal independent set of any order. In the end, we count the number of algebraically independent differential invariants of any order for $\dim M \geq 3$ and summarize the result through the Poincaré function.

\subsection{A brief introduction to jet spaces} \label{sect:jets}

Let us introduce some terminology and notation related to jet spaces, as this provides a good framework for studying differential invariants, cf.\ \cite{KL1, O}. 

A Carrollian spacetime is a section $(g,K)$ of the fiber bundle 
\[\pi\colon S^2 T^*M \oplus TM \to M,\]
satisfying two additional properties:
\begin{enumerate}
    \item $g(K,\cdot)=0$,
    \item $\dim \ker g = 1$.
\end{enumerate}
The first condition defines an algebraic set $\E^0 \subset  S^2 T^*M \oplus TM$, while the second constrains us to an open subset of $\E^0$. Both $\E^0$ and the open subset project surjectively onto $M$. A choice $x^1, \dots, x^{d+1}$ of
local coordinates on $M$, induces
coordinates $u_{ij}=u_{ji}$ and $v^i$ on the fibers of the bundle 
$S^2 T^*M \oplus TM$, in which 
 \[
\E^0 = \{(x,u,v) : u_{ij} v^i=0, j=1,\dots, d+1\}.
 \] 
This corresponds to the relation $g_{ij}K^i=0$ on
the section $g=g_{ij}dx^idx^j,K=K^i \p_{x^i}$ 
(summation over $i,j=1,\dots,d+1$ and we impose $g_{ji} = g_{ij}$).

With this choice of coordinates, the $k$-th order jet bundle $\pi_k \colon J^k \pi \to M$ comes with canonical fiber coordinates $u_{ij,\sigma}, v^i_\sigma$, with $\sigma=(i_1, \dots, i_{d+1})$ being a multi-index with $i_s \geq 0$ and length $|\sigma| = i_1 + \cdots + i_{d+1} \leq k$. The section of $\pi$ given by $u_{ij} = g_{ij}(x), v^i =K^i(x)$ is naturally prolonged to a section of $\pi_k$, given by $u_{ij,\sigma} = \frac{\partial^{|\sigma|}}{\partial x^\sigma} g_{ij}(x),v^i_{\sigma} = \frac{\partial^{|\sigma|}}{\partial x^\sigma} K^i(x)$, which we refer to as the $k$-prolongation of the section $s=(g,K)$ and denote by $j^k s$. Another natural bundle structure that will be used in this paper is given by the projection 
\[\pi_{k,l}\colon J^k\pi \to J^l \pi, \quad k > l.\] 

We introduce the total derivative 
 \[ 
D_l = \partial_{x^l} + \sum_{|\sigma| = 0}^\infty \left( u_{ij,\sigma+1_l} \partial_{u_{ij,\sigma}} + v^i_{\sigma+1_l} \partial_{v^i,\sigma}\right):
C^{\infty}(J^{\infty})\to C^{\infty}(J^{\infty}).
 \]
The horizontal exterior derivative $\hat{d}: C^{\infty}(J^{\infty})\rightarrow\Omega^1(J^{\infty})$ acts on jet-functions by the formula
\begin{equation}
    \hat{d}f = \sum_{l = 1}^{d+1} D_l(f) dx^l.
\end{equation}  
Note that $\hat{d} f$ is not an arbitrary 1-form on $J^\infty \pi$, since it is a $C^\infty(J^\infty \pi)$-linear combination of $dx^1, \dots, dx^{d+1}$ only (no $du_{ij}$, $dv_i$, $du_{ij,l}$, etc.). Such 1-forms are called {\it horizontal}. 

If we restrict to the jet-prolongation of a section $s=(g,K) \in \Gamma(\pi)$, then the horizontal derivative agrees with the ordinary exterior derivative, i.e., $(\hat{d}f) \circ j^k s = d(f \circ j^k s)$. For example, $\hat{d}u_{ij} = u_{ij, l}\, dx^l$. Similar to the usual exterior derivative, the horizontal derivative extends to the map $\hat{d}: \Omega^{\bullet}(J^{\infty}) \rightarrow \Omega^{\bullet+1}(J^{\infty})$ satisfying 
the property $\hat{d}^2 = 0$ and the graded Leibniz rule.

If the functions $g_{ij}, K^i$ are smooth and satisfy $g_{ij}K^i=0$, they will also satisfy $\frac{\partial^{|\sigma|}}{\partial x^\sigma} (g_{ij} K^i)=0$ for any multi-index $\sigma$. For this reason, it makes sense to define the $k$-prolongation of $\E^0$ by 
 \begin{equation*}
\mathcal{E}^{k}:= \{D_{\sigma}(u_{ij} v^i)=0 : 0\leq|\sigma|\leq k\} \subset J^k\pi.
 \end{equation*}
Here $D_\sigma$ is the iterated total derivative, defined by composing total derivatives with respect to the $x^i$-coordinates according to the multi-index $\sigma$. It is readily seen that no compatibility conditions arise by prolonging the algebraic equation, i.e., the differential equation is formally integrable.

Any local diffeomorphism $\varphi$ on $M$ induces a local diffeomorphism $\hat \varphi$ on $S^2 T^* M \oplus TM$ by the standard contravariance/covariance properties. Similarly, a vector field $X$ on $M$ lifts canonically to a vector field $\hat X$ on $S^2 T^* M \oplus TM$. More specifically, if $X = a^i(x) \partial_{x^i}$, then the canonical lift is given by 
\begin{equation} \label{eq:Xhat0}
    \hat X = a^i \partial_{x^i}- \sum_{i\leq j}(u_{lj} a^l_{x^i}+u_{il} a^l_{x^j}) \partial_{u_{ij}} + a^i_{x^j} v^j \partial_{v^i}.
\end{equation}
A diffeomorphism on $S^2 T^* M \oplus TM$ transforms sections of the bundle, and therefore also their $k$-jets. This gives a canonical prolongation $\hat \varphi^{(k)}$ of $\varphi$ to $J^k \pi$, and a canonical prolongation of $\hat X$ to a vector field $\hat X^{(k)}$ on $J^k \pi$. For the expression of $\hat X^{(k)}$, we refer to formula (2) in \cite{KL1} or Theorem 4.16 in \cite{O}.  Due to the coordinate-independent definition of $\E= \{\E^i\}_{i=0}^\infty$, it is clear that $\hat \varphi^{(k)}$ preserves the submanifold $\E^k \subset J^k\pi$ for any diffeomorphism $\varphi$ on $M$, and that $\hat X^{(k)}$ is tangent to $\E^k$ for any vector field $X$ on $M$. 

\begin{definition}
    A differential invariant $I$ of order $k$ (of Carrollian structures) is a function on $\E^k$ that is invariant under prolongations of all local diffeomorphisms. It satisfies 
    \[ L_{\hat X^{(k)}} I = 0, \qquad \forall X \in \mathfrak{X}(M). \]
\end{definition}

 \begin{remark} \label{rk:jets}
When considering a specific section $s=(g,K) \in \Gamma(M)$,  the differential invariant $I$ restricts to a function $I \circ j^k s \in C^\infty(M)$. Often, the latter function is also referred to as a differential invariant (e.g.\ Section \ref{S2}). 
However, the jet language brings precision and new tools 
that we take advantage of. 
 \end{remark}

As we will see, there exist differential invariants of any order. However, they can all be generated by a finite number of differential invariants and invariant derivations. For differential invariants in a general setting, this idea goes back to Sophus Lie. 
We will use a modern global formulation due to \cite{KL2}. We have simplified the statement by making appropriate specifications for the application to Carrollian spacetimes: 

\begin{theorem}[\textbf{The global Lie-Tresse theorem} \cite{KL2}]\label{thm:GLT}
There exists a finite number of rational differential 
invariants $I_1, \dots, I_q$ and $d+1$ invariant derivations $\nabla_1, \dots, \nabla_{d+1}$, such that any rational 
differential invariant is of the following form for some rational function $F$:
 \[ 
F(I_i, \nabla_{j_1}(I_i), \nabla_{j_2}\nabla_{j_1} (I_i), \dots).
 \]
The field of rational differential invariants separates 
orbits in general position in $\mathcal{E}^{\infty}$.
\end{theorem}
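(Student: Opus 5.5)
The plan is to obtain the statement as a specialization of the general global Lie--Tresse theorem of \cite{KL2}. That theorem applies to an algebraic pseudogroup acting on a formally integrable differential equation whose prolongations $\E^k$ are (Zariski open subsets of) irreducible algebraic varieties over $M$, with the prolonged action algebraic on the fibers. So the work consists of checking these hypotheses for the Carrollian setting of Section~\ref{sect:jets}. After that, I would make the invariant derivations explicit, so that the abstract statement ties in with the invariants of Section~\ref{S2}.

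\textbf{Step 1: the equation.} In the fiber coordinates $u_{ij,\sigma},v^i_\sigma$, the prolongations $\E^k$ are cut out by the polynomial equations $D_\sigma(u_{ij}v^i)=0$ with $|\sigma|\le k$. The condition $\dim\ker g=1$ adds only a Zariski open restriction. Formal integrability was already noted: the projections $\pi_{k,k-1}\colon\E^k\to\E^{k-1}$ are surjective, with fibers that are affine spaces over a generic point. I would show that each fiber of $\E^k$ is irreducible as follows. Over the open set where $v\neq0$, the equation $u_{ij}v^i=0$ says that $u$ lies in $S^2(v^\perp)$. This is a vector bundle over $v$-space, hence irreducible. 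Each higher prolongation is an affine bundle over the previous one, so irreducibility is preserved.

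\textbf{Step 2: the action.} The pseudogroup is the group of all local diffeomorphisms of $M$. Its lift $\hat X$ in \eqref{eq:Xhat0} is linear in the fiber coordinates. The prolonged fields $\hat X^{(k)}$ are polynomial in the jet coordinates, and they depend on finitely many derivatives of $a^i$ at each point. The action of the stabilizer of a point on the fibers of $\E^k$ therefore factors through a finite-dimensional algebraic group, namely the $(k+1)$-jets of diffeomorphisms. This is the algebraicity hypothesis of \cite{KL2}. Rosenlicht's theorem then shows, order by order, that rational invariants separate orbits in general position in $\E^k$. Passing to the limit $k\to\infty$ gives the last sentence of the statement.

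\textbf{Step 3: invariant derivations.} To produce the $d+1$ invariant derivations concretely, I would pick $d+1$ rational differential invariants $I^0,\dots,I^d$ whose horizontal differentials $\hat dI^0,\dots,\hat dI^d$ are independent at a generic point of $\E^\infty$. A natural choice is $I^i=I_1^i$ for $i=1,\dots,d$, together with one second-order invariant such as $I^0=I_2^1=\op{tr}(Q_2)$. The Tresse derivatives $\nabla_j$ are the total derivations dual to the coframe $\hat dI^j$. They are rational, since they come from inverting the matrix $(D_l I^j)$, and they commute with the prolonged action. Finiteness of the generating set $I_1,\dots,I_q$ then follows from the Noetherian argument in \cite{KL2}: the prolonged algebraic varieties and the stabilization of orbit dimensions give a finite order beyond which everything is produced by differentiation.

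\textbf{Main obstacle.} The hardest step is checking the genericity in Step~3. One must show that the chosen $I^0$ is functionally independent of $I_1^1,\dots,I_1^d$ on generic jets, with horizontally independent differentials. The derivatives $D_lI_1^i$ involve second jets of $g$ and first jets of $K$. I would try to settle this by exhibiting a single explicit polynomial Carrollian structure, for example in $\dim M=3$, at which the $(d+1)\times(d+1)$ Jacobian $\det(D_lI^j)$ is nonzero. Zariski density then gives genericity. If $I_2^1$ fails at the test point, I would substitute $q_2(v_1,v_1)$ averaged over the root symmetries, or one of the mixed traces.
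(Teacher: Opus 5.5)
Your proposal is correct and follows the paper's route. The paper also takes the statement from \cite{KL2}, and in Section~\ref{S3.3} it realizes the invariant derivations as the Tresse derivatives $\hat\partial_{I^j}$ dual to $\hat dI^j$, using the same basis $I_1^1,I_1^2,I_2^1$ when $d=2$; there finiteness is also seen directly, because $I^i$, $G_{ij}=g(\hat\partial_{I^i},\hat\partial_{I^j})$ and $H^i=\hat dI^i(K)$ put $(g,K)$ in invariant normal form, so you do not actually need the Noetherian argument. For your genericity check you need a test structure for every $d$, not only $\dim M=3$: the family $g=\sum_i e^{F_i(x,t)}(dx^i)^2$, $K=\partial_t$ has $Q_1=\op{diag}(\partial_tF_i)$ and $I_2^1=I_1^2+\sum_i\partial_t^2F_i$, and with $F_1=tx^1+\tfrac12t^2x^1$, $F_i=tx^i$ for $i\geq2$ it gives $\det(D_lI^j)\neq0$ wherever $x^1\neq0$ and the eigenvalues are distinct.
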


The invariant derivations are operators of the form $\nabla_i = \alpha_i^j D_j$, satisfying $[\hat X^{(\infty)}, \nabla_i]=0$ for each $X \in \mathfrak{X}(M)$. They take invariants of order $k$ to invariants of order $k+1$, provided that $k$ is sufficiently large. They let us generate differential invariants of any order from invariants of some fixed order. The $d+1$ invariant derivations also act as an invariant frame. 



\subsection{An invariant frame and second-order invariants}\label{S3.2}

Let $\omega^1, \dots, \omega^{d} \in \Gamma(\U^*)$ 
denote $d$ independent eigenvectors of the operator $Q_1^* \in \op{End}(\U^*)$. These exist for a generic Carrollian structure. The eigenvectors are determined only up to multiplication by a function, which means that we may choose them such that $\|\omega^i\|_g=1$. The remaining ambiguity is a $\pm 1$ factor for each eigenvector in addition to their ordering, and we will consider a fixed but arbitrary choice of these. The eigenvectors $\omega^1, \dots, \omega^{d}$ can be considered as 1-forms on $M$ by pullback through the map $TM \to \U$. 

In the previous paragraph, everything is defined in terms of a specific section $(g,K)$. However, in the jet framework, the 1-forms $\omega^1, \dots \omega^d$ should be understood as horizontal 1-forms on $J^1 \pi$, and this is how we will treat them from now on. In a similar way, and in line with Remark \ref{rk:jets}, we will treat $g$ as a horizontal symmetric 2-form on $\E^0 \subset J^0 \pi$, $q_l$ as horizontal symmetric 2-forms on $\E^l \subset J^l \pi$, and $K$ as a horizontal vector field on $J^\infty \pi$. 

By adding the horizontal 
exterior derivative $\omega^{d+1}:=\hat{d}I_1^1$ of a first-order invariant, we obtain a complete horizontal coframe since $\omega^{d+1}(K)\neq 0$ generically. In other words, the $(d+1)$-form $\rho := \omega^1\wedge\cdots\wedge \omega^d\wedge\omega^{d+1}$ is nonzero on a Zariski open set
in $\E^k$ for every $k\ge2$.   
Note that $\omega^{d+1}$ has order two, while $\omega^i$ has order $1$ for $i \leq d$. We denote the dual invariant frame by $V_1, \dots, V_{d+1}$. These are invariant derivations of the form $V_i = \alpha_i^j D_j$ with the coefficients uniquely determined by the condition $\omega^i(V_j) = \delta^i_j$. 

Using the invariant frame $V_1,\dots, V_{d+1}$, we can construct the second-order differential invariants $g(V_i, V_j)$, $q_1(V_i,V_j)$, $q_2(V_i,V_j)$, along with the $k$-th order invariants $q_k(V_i,V_j)$. However, not all of these are independent. For example, $g(V_i, V_j)= 0$ when $i \neq j$ and $g(V_{d+1},V_{d+1})=0$, and similarly for $q_1$. Simultaneously, there are differential invariants that are not algebraic combinations of these invariants. This is easy to see from the Hilbert function, which is computed in Proposition \ref{prop:Hilbert} in Section \ref{S3.4}. 

In the particular case $d=2$ there exist 9 independent differential invariants of order 2, while $g(V_i,V_j)$, $q_1(V_i,V_j)$, $q_2(V_i, V_j)$ provide us with only $5$ independent invariants: 
 \[
q_1(V_1,V_1), \quad  q_1(V_2,V_2), \quad  q_2(V_1,V_1), \quad q_2(V_1,V_2),\quad q_2(V_2,V_2).
 \]
Let us discuss this case in more detail. It arguably encompasses the most important Carrollian spacetimes of $\dim M=3$, due to their occurrence on black hole horizons. 

 \begin{proposition} \label{prop:2Dlocal}
The following 9 differential invariants are independent as functions on $\E^2 \subset J^2 \pi$: 
 \[ 
I_1^1, \quad I_1^2, \quad q_2(V_1,V_1), \quad q_2(V_1,V_2), \quad q_2(V_2,V_2), \quad \hat{d}I_1^2 (V_1), \quad \hat{d}I_1^2 (V_2), \quad \hat{d}\omega^1(V_1,V_2), \quad \hat{d}\omega^2(V_1, V_2).
 \]
They provide a maximal set of functionally independent differential invariants of the second-order. 
 \end{proposition}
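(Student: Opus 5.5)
The plan has two parts: show the nine functions are invariant and of order at most two, then show they are functionally independent on $\E^2$; maximality follows by comparing with the count of second-order invariants. The count is the Hilbert function value computed later (Proposition \ref{prop:Hilbert}), namely $9$ for $d=2$, obtained as $\dim\E^2-\dim M$ minus the dimension of a generic orbit of the prolonged diffeomorphism pseudogroup on $\E^2$.

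For invariance and order, note that everything is built from natural objects: $g$, $q_1$, $q_2$, the eigen-coframe $\omega^1,\omega^2$ (order one), $\omega^3=\hat d I_1^1$ (order two), and the dual frame $V_i$. The frame is defined up to the discrete ambiguity of signs and ordering. Hence $I_1^1$, $I_1^2$ have order one. The functions $q_2(V_i,V_j)$, $\hat dI_1^2(V_i)$ and $\hat d\omega^a(V_1,V_2)$ have order two: $\hat d\omega^a$ involves only first derivatives of first-order quantities, and $V_1,V_2$ depend on $\omega^3$ only through the requirement $\omega^3(V_{1,2})=0$. Since $V_1,V_2$ span $\ker\omega^3$ and project onto the eigenlines in $\U$, this stays within second-order jets.

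For independence, I would avoid a blind Jacobian computation on $\E^2$ and work at a convenient point of $\E^2$ with adapted coordinates. By diffeomorphism invariance I can normalize, at a point $x_0$, $K=\partial_t$ (straightening $K$), which kills all $v$-jets. Then $g=g_{ab}(t,x,y)\,dx^a dx^b$ on the screen coordinates. With $K=\partial_t$ fixed, the residual freedom is diffeomorphisms preserving $\partial_t$. Each invariant then becomes an explicit function of the Taylor coefficients $g_{ab}$, $\partial g_{ab}$, $\partial^2 g_{ab}$ at $x_0$. I would further normalize $g_{ab}(x_0)=\delta_{ab}$ and $\partial_t g_{ab}(x_0)=\op{diag}(\lambda_1,\lambda_2)$, with $\lambda_1\neq\lambda_2$ and $\lambda_1+\lambda_2$ having generic $t$-dependence, so that $\hat dI_1^1(K)\neq0$.

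Next I would group the remaining seven invariants by the jets they see, giving a triangular Jacobian:
\begin{itemize}
\item $q_2(V_i,V_j)$ reads $\partial_t^2 g_{ab}$ at leading order (three entries);
\item $\hat dI_1^2(V_a)$ reads spatial derivatives $\partial_{x^a}$ of $\op{tr}(Q_1^2)$, i.e.\ combinations of $\partial_{x^a}\partial_t g_{bb}$;
\item $\hat d\omega^a(V_1,V_2)$ involves the spatial derivatives $\partial_x g$, the mixed derivatives $\partial_x\partial_t g_{12}$ (which rotate the eigenframe), and the $V$-components along $K$ coming from $\hat dI_1^1$.
\end{itemize}
Choosing which coefficients to vary (for instance $\partial_t^2g_{ab}$, $\partial_{x^a}\partial_t g_{11}$ versus $\partial_{x^a}\partial_t g_{22}$, and $\partial_{x^b}\partial_t g_{12}$), I expect a block-triangular Jacobian of rank $9$ at a generic point. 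Alternatively, a symbolic rank computation of the $9\times\dim\E^2$ Jacobian at a random rational point would suffice.

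The main obstacle is the two curvature-type invariants $\hat d\omega^1(V_1,V_2)$ and $\hat d\omega^2(V_1,V_2)$. I need to verify that they are independent of the others, and not, say, determined by $\hat dI_1^2(V_a)$ through the flatness of the canonical connection (the Theorem above). I would compute $\hat d\omega^a$ via the connection $\nabla$ of Proposition \ref{cancon}: $\hat d\omega^a$ is expressed through the connection form $a_{12}=b^0_{12}/(\lambda_1-\lambda_2)$ and the spatial Christoffel-type terms of $g$. I would then check that their dependence on $\partial_x g_{ab}$ (first order in spatial directions, which no other invariant sees non-trivially after normalization) gives the final two independent columns. If this normalization argument becomes messy, I would fall back on the explicit rank computation at a sample point.
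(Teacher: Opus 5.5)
Your plan is the paper's proof: independence comes from a rank computation of the Jacobian in the second-order jet variables, which the paper simply does in Maple, and maximality comes from the orbit-dimension count $h_{2,0}+h_{2,1}+h_{2,2}=0+2+7=9$ of Proposition~\ref{prop:Hilbert}. One correction to your hand version: at your normalized point, the columns that isolate $\hat d\omega^a(V_1,V_2)$ are $\partial_{x^a}\partial_t g_{12}$, which enter with coefficient $\pm(\lambda_1-\lambda_2)^{-1}$ and appear in none of the other eight invariants, whereas $\partial_x g_{ab}$ is not a clean choice in general, because $\hat dI_1^2(V_a)$ (and the $K$-component of $V_a$) also depends on the diagonal derivatives $\partial_{x^a}g_{bb}$.
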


 \begin{proof}
The independence is easily verified by direct computation of the rank of the Jacobi matrix with respect to the second-order jet variables. We used Maple for the symbolic computation. (Notice that the remaining freedom in choosing eigenvectors $\omega^1,\omega^2$ does not affect the validity of the proposition.)

The fact that there are no more than 9 functionally independent invariants on $\E^2$ can be shown by analyzing the orbit dimension of the Lie pseudogroup of local diffeomorphisms. We will postpone this analysis to Section \ref{S3.4} and Proposition \ref{prop:Hilbert}.  
 \end{proof}

The invariant 1-forms $\omega^1, \dots, \omega^{d}$ are defined in terms of the eigenvalues of $Q_1^*$, which are solutions to an algebraic equation of degree $d$. Because of this, the 1-forms and the scalar invariants in Proposition \ref{prop:2Dlocal} that are constructed from them, are only locally invariant.
In the simplest case, the expressions involve radicals, for which a diffeomorphism may change branches, so there is no global invariance. 
In general, these algebraic equations do not possess a closed formula for solutions, which restricts practical computations even for these local branches. All these obstacles disappear with the approach of Theorem \ref{thm:GLT}.
In the next section, we will construct a set of 9 rational differential invariants. These are global, i.e., they are defined outside a Zariski-closed invariant set in $\E^\infty$.

\subsection{Rational canonical frame}\label{S3.3}

Let us identify the screen bundle $\U$ with $\ker \hat{d}I_1^1$. We impose the \textit{cyclic condition} that this $\U$ can be generated by powers of $Q_1$ from a single vector $\xi$.
Recall that a generic endomorphism has many cyclic vectors. The purpose of the following theorem is to \textit{invariantly} fix one such vector.

 \begin{theorem}[\textbf{Rational canonical frame}]
Consider the Lie pseudogroup action on the space of jets of Carrollian $(d+1)$ spacetimes $(M,g,K)$. 
We identify $\U\simeq\ker \hat{d} I_1^1$ and view $Q_1$ as an endomorphism of this $\U$. Consider a Zariski open proper subset, on which $\hat{d}I_1^1\wedge \hat{d}I_1^2\wedge (\hat{d}I_1^2 \circ Q_1)\wedge\dots 
\wedge (\hat{d}I_1^2\circ Q_1^{d-1}) \neq 0$. There exists a unique horizontal 
invariant vector field $\xi$ satisfying the conditions
 \begin{equation}\label{eq:canonical_frame}
\xi\in\ker\hat{d}I_1^1,\qquad 
\xi,\,Q_1\xi,\,\dots,\,Q_1^{d-2}\xi\in\ker\hat{d}I_1^2, \qquad  \hat{d}I_1^2(Q_1^{d-1}\xi)=1
 \end{equation}
and $(\xi, Q_1\xi, \dots, Q_1^{d-1}\xi, K)$ is a rational canonical invariant frame of order 2 on this set.
 \end{theorem}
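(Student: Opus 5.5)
The plan is to recast the conditions \eqref{eq:canonical_frame} as a square linear system for $\xi$ and to read off existence, uniqueness, invariance, rationality and order from that system. Write $\theta:=\hat{d}I_1^2$ and $\eta:=\hat{d}I_1^1$; both are horizontal invariant 1-forms of order $2$. Since $Q_1$ is only an endomorphism of $\U=TM/\K$, the first step is to say what $Q_1^k\xi$ means for $\xi\in\ker\eta$. On the set where $\eta(K)\neq0$ (contained in the given Zariski open set, since otherwise the wedge product vanishes), the projection $\ker\eta\to\U$ is an isomorphism. Through it, $Q_1$ becomes an endomorphism of the $d$-dimensional space $\ker\eta$, so $Q_1^k\xi$ is a well-defined element of $\ker\eta$.

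Next, the conditions become linear equations in $\xi\in\ker\eta$: $(\theta\circ Q_1^k)(\xi)=0$ for $0\le k\le d-2$ and $(\theta\circ Q_1^{d-1})(\xi)=1$. These are $d$ equations on a $d$-dimensional space. The system is uniquely solvable exactly when the restrictions of $\theta,\theta\circ Q_1,\dots,\theta\circ Q_1^{d-1}$ to $\ker\eta$ are linearly independent. I will extend each $\theta\circ Q_1^k$ to $TM$ using the splitting $TM=\ker\eta\oplus\K$; a change of extension alters it only by multiples of $\eta$. Independence on $\ker\eta$ is then equivalent to $\eta\wedge\theta\wedge(\theta\circ Q_1)\wedge\dots\wedge(\theta\circ Q_1^{d-1})\neq0$, which is the stated open condition. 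Cramer's rule therefore gives a unique $\xi$, whose components are rational in the jet coordinates of order $\le2$. These coordinates are $Q_1$ (order 1, rational in $u_{ij},u_{ij,l},v^i$, using $g$ inverted on $\U$) and $D_lI_1^1,D_lI_1^2$ (order 2).

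Invariance follows from uniqueness. The forms $\eta,\theta$, the operator $Q_1$ and the identification $\ker\eta\simeq\U$ are all defined naturally, so they commute with the prolonged action $\hat\varphi^{(2)}$ of any local diffeomorphism. Hence $\hat\varphi^{(2)}_*\xi$ satisfies the same conditions and equals $\xi$. Infinitesimally this says $[\hat X^{(\infty)},\xi]=0$, so $\xi$ is an invariant derivation.

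The frame property comes from the same condition. The vectors $\xi,Q_1\xi,\dots,Q_1^{d-1}\xi$ span $\ker\eta$ because pairing them with $\theta\circ Q_1^j$ gives a triangular matrix with unit anti-diagonal: $\theta(Q_1^{j+k}\xi)=0$ for $j+k\le d-2$ and $=1$ for $j+k=d-1$. Adding $K$, with $\eta(K)\neq0$, yields a frame of $TM$. Every member is invariant, is rational in the jets, and has order $\le2$ ($K$ itself has order $0$). I expect the main subtlety to be the careful identification of $Q_1$ on $\ker\eta$ and the check that the wedge condition matches solvability. The only other care needed is that the Zariski open set is nonempty for generic structures, which one confirms on an explicit example.
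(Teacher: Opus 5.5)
Your proof is correct and follows essentially the same route as the paper. The conditions form a square linear system on $\ker\hat{d}I_1^1$ whose unique solvability is exactly the wedge condition; rationality comes from linearity, invariance from naturality plus uniqueness, and your anti-triangular matrix $\theta(Q_1^{j+k}\xi)$ is a direct version of the paper's contradiction argument for cyclicity of $\xi$. One small slip: your parenthetical has the inclusion backwards, since the identification $\ker\hat{d}I_1^1\simeq\U$ (and hence the wedge condition itself) presupposes $\eta(K)\neq0$, so the given set sits inside $\{\eta(K)\neq0\}$, not the other way around.
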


\begin{proof}
The first $d$ conditions are equivalent to 
\begin{equation}
    \xi \in L := \ker \hat{d}I_1^1 \cap \ker \hat{d}I_1^2 \cap \ker \hat{d}I_1^2 \circ Q_1 \cap \cdots \cap \ker \hat{d}I_1^2 \circ Q_1^{d-2} \subseteq \U. 
\end{equation}
By linear independence of the indicated 1-forms, it follows that the common kernel $L$ is one-dimensional and additionally that $(\hat{d}I_1^2 \circ Q_1^{d-1})|_L \neq 0$. Thus, the remaining condition $\hat{d}I_1^2(Q_1^{d-1}\xi) = 1$ has a 
unique solution on this line $L$. Note that $\xi$ is cyclic for $Q_1$. (To see this, assume for the sake of obtaining a contradiction that there exists $1 \leq k < d$ such that $Q_1^k\xi \in \langle \xi, Q_1 \xi, \dots, Q_1^{k-1} \xi \rangle$. Applying $Q_1^{d-1-k}$ to $Q_1^k\xi$ yields an expression in the kernel of $\hat{d}I_1^2$ which is 
a contradiction with $\hat{d}I_1^2(Q_1^{d-1}\xi) = 1$). The solution $
\xi$ is rational in jets because $I_1^2$ and $Q_1$ are rational together 
with \eqref{eq:canonical_frame} being linear in $\xi$. Canonicity and invariance readily 
follow from the same properties of $Q_1$.
\end{proof}

For the case $d = 2$, we now describe all second-order rational invariants. Let us denote 
$\eta = Q_1\xi$. In the $\xi, \eta$-basis, we have that 
\begin{equation}\label{eq:Q1_matrix}
    Q_1 = \begin{pmatrix}
   0 & I_1^2 - \frac12 (I_1^1)^2 \\ 1 & I_1^1 
\end{pmatrix}
\end{equation}
 
First, we can contract $\xi, \eta$ with the Carrollian metric (viewed as a horizontal 
symmetric $2$-tensor on $J^{\infty}$) to produce invariants $g(\xi, \xi), g(\xi, \eta)$ 
and $g(\eta, \eta)$. Of course, there are only two new functionally independent 
invariants among these, as we have
\begin{equation}\label{eq:syzygy1}
    g(\eta, \eta) = g(Q_1(\xi), \eta) = g(\xi, Q_1(\eta)) = \bigl(I_1^2 - \tfrac12 (I_1^1)^2\bigr)  g(\xi, \xi) + I_1^1 g(\xi, \eta).
\end{equation}

Moreover, we have that 
\begin{equation}
    Q_2(\eta) = J_1 \xi + J_2 \eta
\end{equation}
for two rational scalar differential invariants $J_1, J_2$. The trace of $Q_2$ is given 
by $I_2^1$ so the $\eta$-component of $Q_2(\xi)$ equals $I_2^1 - J_1$. An application of Cayley--Hamilton then yields the $\xi$-component, and we obtain the matrix representation 
\begin{equation}\label{eq:Q2_matrix}
    Q_2 = \begin{pmatrix}
J_1 & \frac{1}{2 J_2}\bigl(I_2^1(2 J_1 - I_2^1) - 2 J_1^2 + I_2^2\bigr) \\ J_2 & I_2^1 - J_1.    
\end{pmatrix}
\end{equation}

As a consequence, we get the following syzygy among the obtained invariants:
\begin{equation}\label{eq:syzygy2}
    g(Q_2\xi, \eta) = J_1\, g(\xi,\xi) + J_2\, g(\xi, \eta) = \left(\frac{1}{2 J_2}(I_2^1(2 J_1 - I_2^1) - 2 J_1^2 + I_2^2)\right)\,  g(\xi,\xi) + (I_2^1 - J_1)\, g(\xi, \eta).
\end{equation}

Recall from \cref{S3.2} that the vector fields $V_1, V_2$ are dual to $\omega^1, 
\omega^2$, and consider the two second-order invariants $
\hat{d}\omega^1(V_1, V_2)$ and $\hat{d}\omega^2(V_1, V_2)$. Two syzygies among the invariants were already exhibited in equations \eqref{eq:syzygy1} and \eqref{eq:syzygy2}. Now we show functional independence of nine rational invariants of order 2, which is the maximal possible number cf.\ Section \ref{S3.4}.

\begin{proposition} \label{prop:2Dglobal}
The following 9 second-order differential invariants are independent and rational functions of the 2-jet of the Carrollian metric:
\begin{equation}
    I_1^1,\ I_1^2,\ I_2^1,\ I_2^2,\ g(\xi,\xi),\ g(\xi, \eta),\ J_1,\ \hat{d}\omega^1(V_1, V_2)^2 + \hat{d}\omega^2(V_1,V_2)^2,\ \hat{d}\omega^1(V_1,V_2)^2 \,\hat{d}\omega^2(V_1,V_2)^2.
\end{equation}
\end{proposition}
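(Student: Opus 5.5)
The statement has two parts: rationality of the nine invariants and their functional independence on $\E^2$. I would treat them separately.

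\emph{Rationality.} $I_1^1,I_1^2,I_2^1,I_2^2$ are traces of powers of $Q_1,Q_2$, hence polynomial in $g^{-1}$ and in the 1- and 2-jets of $(g,K)$ restricted to $\U$, so they are rational. By the rational canonical frame theorem, $\xi$ is rational in 2-jets. Therefore $\eta=Q_1\xi$, $g(\xi,\xi)$, $g(\xi,\eta)$ and the coefficient $J_1$ in $Q_2\eta=J_1\xi+J_2\eta$ are rational too.

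The invariants $\hat d\omega^i(V_1,V_2)$ are only local: they change sign under $\omega^i\mapsto-\omega^i$, and they are interchanged when the eigenvalues are reordered. Their squares are unaffected by the sign ambiguity $\Z_2^2$. The symmetric sum and product of these squares are unaffected by the swap $S_2$. So the last two entries are invariant under the full monodromy group $S_2\ltimes\Z_2^2$ of the roots, and they are symmetric in the two branches $\lambda_{1,2}=\tfrac12\bigl(I_1^1\pm\sqrt{2I_1^2-(I_1^1)^2}\bigr)$. Hence they are rational functions of the jets, because a Galois-invariant expression in the root field of the characteristic polynomial lies in the base field. One also has to check that normalizing $\|\omega^i\|_g=1$ introduces only square roots that disappear after squaring. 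I expect this to follow from expressing $\hat d\omega^i(V_1,V_2)^2$ through $g$-norms.

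\emph{Independence.} By Section \ref{S3.4}, at most 9 functionally independent second-order invariants exist. It therefore suffices to show that the Jacobian of the nine functions has rank 9 at one point of $\E^2$. The clean way is to relate this set to Proposition \ref{prop:2Dlocal}, working locally on a branch.

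The local set there is $I_1^1$, $I_1^2$, $q_2(V_a,V_b)$, $\hat dI_1^2(V_a)$ and $\hat d\omega^a(V_1,V_2)$. The frame $(\xi,\eta)$ spans $\U=\ker\hat dI_1^1$. Since $\omega^a$ annihilates $K$, the $V_a$ for $a\le2$ differ from vectors in $\ker\hat dI_1^1$ by multiples of $K$, which is invisible to $g$, $q_1$, $q_2$. I would write $\xi=x_1\bar V_1+x_2\bar V_2$, where $\bar V_a$ are the projections of $V_a$ to $\ker\hat dI_1^1$. The defining conditions $\hat dI_1^2(\xi)=0$ and $\hat dI_1^2(Q_1\xi)=1$ then give $x_a$ rationally in $\lambda_a$ and $\hat dI_1^2(\bar V_a)$.

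From this, $g(\xi,\xi)=x_1^2+x_2^2$ and $g(\xi,\eta)=\lambda_1x_1^2+\lambda_2x_2^2$. Together with $I_1^1,I_1^2$ (equivalently $\lambda_{1,2}$), these determine $x_1^2,x_2^2$, and hence $\hat dI_1^2(\bar V_a)^2$. Next, $I_2^1$, $I_2^2$ and $J_1$ determine the matrix of $Q_2$ in the $(\xi,\eta)$ basis up to the discrete ambiguity coming from $J_2$. Combined with the frame data this recovers $q_2(V_a,V_b)$ up to signs. The last two invariants recover $\hat d\omega^a(V_1,V_2)^2$ up to the swap.

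So the local nine invariants are algebraic over the field generated by the global nine. Since Proposition \ref{prop:2Dlocal} gives transcendence degree 9, the global nine must also be independent.

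\emph{Main obstacle.} The delicate step is keeping track of the $\bar V_a$ versus $V_a$ distinction. The invariant $\hat dI_1^2(V_a)$ also involves the $K$-component of $V_a$, which is not seen by $(\xi,\eta)$, and this component has to be recovered from $\hat dI_1^1$. The argument also needs the relevant Jacobians, such as $\partial(x_1^2,x_2^2)/\partial(\cdot)$, to be nonzero generically. If this bookkeeping becomes unwieldy, the fallback is the paper's approach in Proposition \ref{prop:2Dlocal}: evaluate the $9\times$(number of second-order jet coordinates) Jacobian at a random rational point of $\E^2$ in Maple and verify rank 9.
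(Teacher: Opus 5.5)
Your rationality argument is the paper's argument. The paper lists the three radicals that occur ($\sqrt{g(\omega^1_0,\omega^1_0)}$, $\sqrt{g(\omega^2_0,\omega^2_0)}$ and the square root of the discriminant of $Q_1$) and checks that the last two entries are invariant under the group these radicals generate. That group is your signed-permutation group $S_2\ltimes\Z_2^2\cong D_4$.

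The step you left as an expectation does hold. Write $\omega^a=\omega^a_0/\sqrt{n_a}$ with $n_a=g(\omega^a_0,\omega^a_0)$, and let $V^0_a$ be dual to $\omega^a_0$. Then $V_a=\sqrt{n_a}\,V^0_a$ and
\[
\hat d\omega^1(V_1,V_2)=\sqrt{n_2}\,\Bigl(\hat d\omega^1_0(V^0_1,V^0_2)+\tfrac{1}{2n_1}\,\hat dn_1(V^0_2)\Bigr).
\]
Similarly, $\hat d\omega^2(V_1,V_2)$ is $\sqrt{n_1}$ times an expression free of $\sqrt{n_1},\sqrt{n_2}$. So the squares lie in the quadratic extension by the discriminant root, conjugation swaps them, and your symmetric functions descend to rational functions.

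For independence, the paper simply computes the Jacobian rank of the nine global invariants symbolically, which is your fallback. Your main route is genuinely different and correct. You show that the nine branch-dependent invariants of Proposition \ref{prop:2Dlocal} are algebraic over the field $F$ generated by the nine global ones, and then compare transcendence degrees. This needs no new computation, although it inherits the computer check behind Proposition \ref{prop:2Dlocal}. It also proves more than independence: the local invariants become algebraic functions of the rational ones, and the roles of the syzygies \eqref{eq:syzygy1} and \eqref{eq:syzygy2} become visible. The paper's direct check, in turn, is self-contained and does not rely on the local set.

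Three points in your independence argument need tightening.
\begin{itemize}
\item Your \emph{main obstacle} does not exist. $V_1,V_2$ are dual to $\omega^1,\omega^2$ inside the coframe $(\omega^1,\omega^2,\omega^3=\hat dI_1^1)$, so $\hat dI_1^1(V_a)=0$. Hence $\bar V_a=V_a$ is already a $g$-orthonormal eigenbasis of $Q_1$ on $\U=\ker\hat dI_1^1$, and $x_1=1/\bigl((\lambda_1-\lambda_2)\hat dI_1^2(V_1)\bigr)$, $x_2=-1/\bigl((\lambda_1-\lambda_2)\hat dI_1^2(V_2)\bigr)$ holds exactly.
\item $I_2^1$, $I_2^2$ and $J_1$ alone do not restrict $J_2$ to finitely many values. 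What makes the ambiguity discrete is the $g$-self-adjointness of $Q_2$, i.e.\ \eqref{eq:syzygy2} together with \eqref{eq:syzygy1}. After multiplying by $J_2$, this becomes a quadratic equation for $J_2$ with coefficients in $F$ and leading coefficient $\pm g(\eta,\eta)\neq0$, so $J_2$ is algebraic over $F$. You must state this explicitly.
\item No Jacobian condition such as $\p(x_1^2,x_2^2)/\p(\cdot)\neq0$ is needed. Algebraicity over $F$, together with transcendence degree $9$ from Proposition \ref{prop:2Dlocal}, already forces the nine global invariants to be algebraically, hence functionally, independent.
\end{itemize}
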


\begin{proof}
Rationality of the first seven invariants is clear, since the 
operator $Q_1$ and the frame $(\xi, \eta, K)$ are rational in the jets of $g$. 

We prove the rationality of the last two invariants by showing that these are invariant under the group generated by the flipping of signs of the appearing roots. From 
\eqref{eq:Q1_matrix}, we obtain that the eigenvalues of $Q_1$ are given by $\lambda_{\pm} 
= \frac12 I_1^1 \pm \sqrt{4 I_1^2 - (I_1^1)^2} $. In the notation of Section \ref{S3.2}, we take 
$\omega^1_0 := \lambda_{+}^{-1} dx + dy$ and $\omega^2_0 := \lambda_{-}^{-1} dx + dy$ as 
eigenvectors for $Q_1^{*}$. These eigenvectors are relative invariants, but can be made 
into absolute invariant eigenvectors by normalization, i.e., we define $\omega^1 := 
\frac{\omega^1_0}{|\omega^1_0|}$ and $\omega^2 := \frac{\omega^2_0}{|\omega^2_0|}$. 

Note that precisely three roots appear in the expressions of $\hat{d}\omega^i(V_1, V_2)$, 
namely $\sqrt{g(\omega^1_0,\omega^1_0)}, \sqrt{g(\omega^2_0, \omega^2_0)}$ and $\sqrt{4 I_1^2 - (I_1^1)^2}$. Now consider the following generators of our group:
\begin{equation}
    \sigma_1: \sqrt{g(\omega^1_0, \omega^1_0)} \rightarrow -\sqrt{g(\omega^1_0, \omega^1_0)},\qquad \sigma_2: \sqrt{g(\omega^2_0, \omega^2_0)} \mapsto -\sqrt{g(\omega^2_0, \omega^2_0)},\qquad \sigma_3: \lambda_{+} \mapsto \lambda_{-}\;.
\end{equation}
Note that $\sigma_1$ fixes $V_2$ but assigns $V_1 \mapsto -V_1$ and $\omega^1 \mapsto -
\omega^1$. Similarly, $\sigma_2$ fixes $V_1$ but assigns $V_2 \mapsto -V_2$ and $\omega^2 
\mapsto -\omega^2$. Finally, we see that $\sigma_3$ interchanges $V_1 \leftrightarrow 
V_2$ and $\omega^1 \leftrightarrow \omega^2$. Actually, $\sigma_3$ also interchanges $
\sqrt{g(\omega^1_0, \omega^1_0)} \leftrightarrow \sqrt{g(\omega^2_0, \omega^2_0)}$. The corresponding
monodromy group $\Gamma$ (that interchanges branches) is not Abelian, it is isomorphic to the dihedral group $D_4$ of order 8.

One readily computes the action of each $\sigma_i$ on $\hat{d}\omega^j(V_1, V_2)$ for $j=1,2$, for instance 
$\sigma_3(\hat{d}\omega^1(V_1, V_2)) = \hat{d}\omega^2(V_2, V_1) = -\hat{d}\omega^2(V_1, V_2)$. 
In view of these actions, we conclude that 
$\hat{d}\omega^1(V_1, V_2)^2 + \hat{d}\omega^2(V_1,V_2)^2$ and $
\hat{d}\omega^1(V_1,V_2)^2 \,\hat{d}\omega^2(V_1,V_2)^2$ are invariant under $\Gamma$, and 
are thus rational in the jets of $g$.

Independence of the invariants may be verified using a symbolic computation of the 
Jacobian wrt the jet variables. 
\end{proof}

In the introduction, we showed one way of generating all invariants from $d+1$ basis differential invariants $I^0, \dots, I^{d+1}$. In the jet framework, we can rephrase it like this: on the domain where 
$\hat dI^0\wedge\cdots\wedge\hat{d}I^{d+1}\ne0$  
construct the horizontal coframe $\hat dI^i$ and the dual horizontal frame of Tresse derivatives $\hat \partial_{I^j}=\alpha_j^l D_l$, with coefficients defined by the condition $\hat dI^i(\hat \partial_{I^j})=\delta_j^i$. The functions $G_{ij}=g(\hat \partial_{I^i}, \hat \partial_{I^j})$ and $H^i = \hat dI^i(K)$ are now differential invariants. Since these differential invariants fully encode the Carrollian spacetime (locally), they serve as generators for the differential invariants, thus realizing Theorem \ref{thm:GLT}. Notice that even though they, in principle, generate all invariants, it is not clear how to obtain, for example, $\hat d\omega^1(V_1,V_2)$ from Proposition \ref{prop:2Dlocal} if the 3 basis differential invariants are $I_1^1, I_1^2, I_2^1$. 

The generating set $\{I^i, G_{ij}, H^i, \hat \partial_{I^i}\}$ is not a minimal set of generators. 

 \begin{proposition}
The 6 rational  differential invariants 
 \begin{equation}\label{eq:6generators}
I_1^1,\quad I_1^2,\quad I_2^1,\quad I_2^2,\quad \hat{d}\omega^1(V_1, V_2)^2 + \hat{d}\omega^2(V_1,V_2)^2,\quad \hat{d}\omega^1(V_1,V_2)^2 \,\hat{d}\omega^2(V_1,V_2)^2.
 \end{equation} 
and the 3 invariant derivations $\xi, \eta, K$ generate 
a transcendence basis for the field of differential invariants (of any order). 
 \end{proposition}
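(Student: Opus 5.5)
The plan is to reduce the statement to Proposition~\ref{prop:2Dglobal} together with a Lie--Tresse-type regularity argument. Denote by $\mathcal{A}$ the differential field generated by the six invariants \eqref{eq:6generators} under the derivations $\xi,\eta,K$. These three derivations form the rational canonical invariant frame of the previous theorem, so they are invariant derivations in the sense of Theorem~\ref{thm:GLT}.

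First I show that $\mathcal{A}$ contains all nine second-order invariants of Proposition~\ref{prop:2Dglobal}. Only $g(\xi,\xi)$, $g(\xi,\eta)$ and $J_1$ are missing from \eqref{eq:6generators}. For these I use the commutator structure of the frame: write
\[
[\xi,\eta]=c^1\xi+c^2\eta+c^3K,
\]
and similarly for $[\xi,K]$ and $[\eta,K]$. The structure functions $c^i$ are differential invariants, obtained by applying the commutators to the generators $I_1^1,I_1^2$; for instance $[\xi,\eta]I_1^1=c^3\,K(I_1^1)$ and $[\xi,\eta]I_1^2=c^2+c^3K(I_1^2)$.

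The key observation is that $g(\xi,\xi)$ and $g(\xi,\eta)$ enter these structure functions and derivatives of the generators. Concretely, $K$ applied to $I_1^k$ produces $I_2^{\bullet}$-type terms. The metric contractions appear through the relation $q_1=L_Kg$: evaluating
\[
(L_Kg)(\xi,\xi)=K\bigl(g(\xi,\xi)\bigr)-2g([K,\xi],\xi)
\]
and using $q_1(\xi,\xi)=g(\eta,\xi)$ gives linear equations relating $g(\xi,\xi)$, $g(\xi,\eta)$ and the structure functions. Similarly $q_2(\xi,\eta)=g(Q_2\xi,\eta)$ brings in $J_1$ via the syzygy \eqref{eq:syzygy2}. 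I expect that a small number of these identities can be solved algebraically for $g(\xi,\xi)$, $g(\xi,\eta)$ and $J_1$ in terms of derivatives of the six generators. This is the step I would first check in Maple, by computing ranks of Jacobians of candidate third-order expressions such as $\xi(I_1^2)$, $K(I_1^1)$ and $[\xi,\eta](I_1^1)$ with respect to jet variables.

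Next, the equivalence argument. On the open set where the Tresse coframe built from $\hat dI_1^1,\hat dI_1^2,\hat dI_2^1$ is nondegenerate, the generators $G_{ij},H^i$ from the introductory construction are expressible through the frame $\xi,\eta,K$ and the metric contractions. Indeed $H^i=K(I^i)$, and $G_{ij}$ is obtained by inverting the matrix $[\xi(I^j),\eta(I^j),K(I^j)]$ and contracting with $g(\xi,\xi)$, $g(\xi,\eta)$ and $g(\eta,\eta)$, the last of which is given by \eqref{eq:syzygy1}. Since $\{I^i,G_{ij},H^i,\hat\partial_{I^i}\}$ generate all rational invariants (Theorem~\ref{thm:GLT} as realized above), the derivatives of the six generators generate the whole field up to algebraic extension. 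It then remains to extract a transcendence basis order by order.

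Finally, I compare counts with the Hilbert function from Proposition~\ref{prop:Hilbert}. For each order $k$, I take derivatives of the generators along words in $\xi,\eta,K$ of length $k-2$ and keep a maximal independent subset. Their number must match the Hilbert function, which I would verify via Jacobian rank at a generic point for low $k$. For large $k$, I would argue that symbols of the derivations are surjective on the leading jet parts, using the fact that the symbol module is generated in order $2$ as the Poincar\'e function indicates.

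The main obstacle is the first step: actually exhibiting $g(\xi,\xi)$, $g(\xi,\eta)$ and $J_1$ as rational, or at least algebraic, functions of third-order derivatives of the six generators. If a direct algebraic inversion fails, I would fall back to an independence count: show that the third-order invariants generated by the six plus derivations already have the full Hilbert count. In that case the three missing invariants are algebraic over them, which suffices for a transcendence basis.
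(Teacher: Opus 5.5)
Your proposal is essentially the paper's proof. The paper checks by computer algebra that the six invariants together with $\xi,\eta,K$ produce $25=s_{2,3}$ independent invariants through order $3$ (your fallback in Step 1). It then notes that for second-order basic invariants $I^0,I^1,I^2$ the Tresse derivatives, and hence $G_{ij}$ and $H^i$, are of order $3$, so all higher-order invariants are generated (your Step 2, which you make more explicit by writing $\hat\partial_{I^i}$ and $G_{ij}$ through $\xi,\eta,K$, $g(\xi,\xi)$ and $g(\xi,\eta)$).

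Some cautions:
\begin{itemize}
\item Your primary route in Step 1 will not close as written. The identities from $q_1=L_Kg$ involve the unknown derivatives $K(g(\xi,\xi))$ and $K(g(\xi,\eta))$, so they are differential rather than algebraic.
\item In the fallback count, the seven elements of order $\le 2$ (the generators and $K(I_1^2)$, which brings in $\op{tr}(Q_1Q_2)$) plus the fifteen first derivatives of the five second-order ones give at most $22$. To reach $25$ you must also include third-order frame coefficients such as $c^1$ in $[\xi,\eta]=c^1\xi$ (your $c^2$ and $c^3$ vanish).
\item Step 3 is superfluous once Step 2 shows every invariant is algebraic over your field.
\end{itemize}
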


Let us recall that a transcendence basis is a maximal independent subset of the field of invariants, which implies
that any other differential invariant can be obtained
by an algebraic extension (simplest case: extracting a root of a power). 
Actually, this extension is only required for invariants of order 2 because the higher order invariants obtained by invariant derivations are affine in their top-order jets.

 \begin{proof}
For $d=2$, it is not hard to show, using a computer algebra system, that invariants \eqref{eq:6generators} and the three invariant derivations generate 9 independent invariants of order 2, in addition to 16 new independent invariants of order 3. We thus obtain a transcendence basis of the field of rational differential invariants of order 3 (compare to the counting in Table \ref{tab:Hilbert}).  

Looking back at the previous generators, let us choose basic invariants $I^0, I^1, I^2$ of the second order; the respective Tresse derivations $\hat\p_{I^i}$ are of the third order. This implies that the corresponding functions $G_{ij}$ and $H^i$ are third-order invariants. Thus, we can generate all higher order differential invariants from those of order $3$, and the statement of the proposition follows.    \end{proof}

 \begin{remark}
As is well-known, cf.\ \cite{S}, all invariants of a frame
$V_i$ with dual coframe $\omega^j$ on a (finite-dimensional) manifold can be expressed through structure functions 
$c_{ij}^k=\omega^k([V_i,V_j])=-d\omega^k(V_i,V_j)$ and their derivatives along $V_i$. 
The frames we construct restrict to jet-sections,
that are Carrollian structures, and they encode those structures 
(under a minor modification, by normalizing $\omega^{d+1}$ or equivalently changing $V_{d+1}$ to $K$).
While in the framework of the previous subsection all $c_{ij}^k$ have order 2,
the structure functions in this section have orders 2 and 3.
Thus, generating second-order differential invariants from them would involve syzygies among the 
structure functions and their invariant derivatives.
 \end{remark}

\subsection{Counting invariants: The Hilbert and Poincaré functions}\label{S3.4}

We stated in Section \ref{S3.2} that there are 9 independent differential invariants of second order when $d=2$. In this section, we justify this and count the number of independent invariants of any order for $d\geq2$. 

Let us introduce local coordinates $x^1, \dots, x^d, t$ on $M$
in which the vector field $K$ is straightened:
 \begin{equation}\label{straightened}
K = \partial_t, \quad  g = g_{ij}(x,t)\,dx^i\,dx^j \quad (g_{ji} = g_{ij}). 
 \end{equation}
The vector fields on $M$ preserving this class 
(in particular, commuting with $K$)
of  Carrollian structures take the general form 
 \[
X= a^i(x) \partial_{x^i} + b(x) \partial_t,
 \]
For what follows, we note that the local straightening of $K$
does not influence the count of differential invariants.
 
The Carrollian structures given by \eqref{straightened}
with $\op{rank}(g_{ij})=d$
are sections of a subbundle $E\subset S^2T^*M$ defined by the condition $g(\p_t,\cdot)=0$, and we keep the same notation $u_{ij}=u_{ji}$ for the coordinates in the fiber (only now with $i,j \leq d$),
and $\pi$ for the projection. 
Then the lift of $X$ to $E$ takes the form
 \begin{equation} \label{eq:Xhat}
\hat X= a^i \partial_{x^i} + b \partial_t-\sum_{i\leq j}\left(a^l_{x^i} u_{lj} + a^l_{x^j} u_{il}\right) \partial_{u_{ij}}. 
 \end{equation}
Repeated indices indicate a sum from $1$ to $d$ unless otherwise specified. 
Notice that $\hat X$ depends on the 1-jet of functions $a^i$ and the 0-jet of $b$. Similarly, as we will see below, the general prolonged vector field $\hat X^{(k)}$ depends on the $(k+1)$-jet of $a^i$ and the $k$-jet of $b$. Thus, the dimension of the span of these vector fields in $J_{\theta_k}^k(E)$ is bounded from above by 
 \[
\dim J_p^{k+1}(M,\mathbb R^{d}) + \dim J_p^k(M,\mathbb R) = d\,\binom{d+k+1}{d}+\binom{d+k}{d},
 \]
where $J_p^k(M, \R^i)$ denotes the space of $k$-jets of maps from $M$ to $\R^i$ at the point $p\in M$ (the above equality does not depend on $p$). 
Since $\dim J^k(E) = d+1+\binom{d+1}{2}\binom{d+k+1}{d+1}$, we obtain a lower bound for the number of independent differential invariants of order $k$: 
 \begin{equation} \label{eq:Hilbert0}
r_{d,k} =d+1+\binom{d+1}{2}\binom{d+k+1}{d+1}-d\, \binom{d+k+1}{d}-\binom{d+k}{d}.
 \end{equation} 

 \begin{definition}
We let $s_{d,k}$ denote the number of algebraically independent differential invariants of order $k$, and define the Hilbert function $h_{d,k}$ by $h_{d,0} = s_{d,0}$ and $h_{d,k}=s_{d,k} - s_{d,k-1}$, $k >0$.
 \end{definition}

Thus, $s_{d,k}$ counts the total number of algebraically independent differential invariants of order $k$, while $h_{d,k}$ counts the number of independent differential invariants of ``pure order'' $k$. In proposition \ref{prop:Hilbert}, we will see that the lower bound \eqref{eq:Hilbert0} is usually attained for $k \geq 3$, i.e., $s_{d,k} = r_{d,k}$. The only exception is $d=2$, in which case $s_{2,k} = r_{2,k} +2$ for $k \geq 1$. 

 \begin{proposition}\label{prop:Hilbert} 
The Hilbert function for differential invariants of  Carrollian structures ($d \geq 2$) 
is given by $h_{d,0}=0$, $h_{d,1}=d$, then 
$h_{d,2} = r_{d,2} -d+2 \delta_{d,2}$ for $k=2$ and, in general, 
 \[
h_{d,k} = r_{d,k}-r_{d,k-1}= \binom{d+1}{2}\binom{d+k}{d}-
d\,\binom{d+k}{d-1}-\binom{d+k-1}{d-1}\quad\text{for }\ k \geq 3.
 \] 
 \end{proposition}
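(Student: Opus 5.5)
The count rests on the orbit dimension of the prolonged pseudogroup action: $s_{d,k}=\dim\E^k-\dim(\text{generic orbit in }J^k)$. We work in the straightened coordinates \eqref{straightened}, where $\dim J^k(E)=d+1+\binom{d+1}{2}\binom{d+k+1}{d+1}$. The prolongations $\hat X^{(k)}$ of the vector fields $X=a^i(x)\p_{x^i}+b(x)\p_t$, evaluated at a generic point $\theta_k$, span a space whose dimension is at most $\dim J^{k+1}_p(M,\R^d)+\dim J^k_p(M,\R)$. The reduction to $K=\p_t$ does not change the invariants, since the straightening is a partial normalization. Hence $s_{d,k}\ge r_{d,k}$, with equality exactly when the action of the $(k+1)$-jets of $a$ and $k$-jets of $b$ is generically free. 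So the plan has three parts: compute the stabilizer of a generic $k$-jet for $k=0,1,2$ directly, prove freeness for $k\ge3$ (with the $d=2$ exception), and then take differences.

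For low orders we argue directly. At $k=0$ the fiber coordinates $u_{ij}$ can be normalized to $\delta_{ij}$ by $a^i_{x^j}$, and the base point moves, so $s_{d,0}=0$. At $k=1$ the first-order invariants are the eigenvalues of $Q_1$, equivalently $I_1^1,\dots,I_1^d$. There are $d$ of them, and no more: the normal form $g=\delta$ at the point, with $\p_t g_{ij}$ diagonal and $\p_{x^l}g_{ij}$ killed by second jets of $a$, shows that all remaining first jets lie in one orbit. This gives $h_{d,1}=d$.

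At $k=2$ the count is $s_{d,2}=r_{d,2}+2\delta_{d,2}$, which yields the stated formula for $h_{d,2}$. The key observation is that the jets of $a^i$ depend only on $x$, not on $t$. The $t$-derivatives $\p_t g$ and $\p_t^2 g$ therefore transform only through the first jet of $a$, and $b$ enters only via $\p_t$-free directions. We find the stabilizer by writing out the linearized action of the top-order jets of $(a,b)$ on the top-order symbols $u_{ij,\sigma}$ and computing its kernel at a generic point. For $d=2$ this kernel is nonzero and leads to the $+2$, consistent with the 9 independent second-order invariants of Proposition~\ref{prop:2Dlocal} and Proposition~\ref{prop:2Dglobal} (giving $s_{2,2}=9$, versus $r_{2,2}=7$). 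For $d=2$ the two extra invariants already appear at $k=1$: there $r_{2,1}=3+3\cdot 6-2\cdot 3-3=0$, yet $s_{2,1}=2$, so the stabilizer at order $1$ has dimension $2$.

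For $k\ge3$ we show the stabilizer is trivial at the symbol level, that is, on the highest-order jets $a^{(k+1)}$ and $b^{(k)}$. The map on these symbols is
\[
(a^l_{\tau},\,b_{\tau'})\mapsto -\bigl(a^l_{\tau+1_i}u_{lj}+a^l_{\tau+1_j}u_{il}\bigr)
\]
together with the terms in which the $t$-derivatives of $u$ are hit by the $x$-derivatives of $a$ and $b$. Since $a$ and $b$ are $t$-independent, we use the components $u_{ij,\sigma}$ with one $t$ in $\sigma$, which involve $q_1$. Nondegeneracy of $Q_1$ (Definition~\ref{ndgq1}) should make the corresponding symbol map injective: on components where $\sigma$ contains $t$, the $k$-jet of $b$ enters paired with $q_1$. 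We would try first to reduce injectivity to a Vandermonde-type argument in the eigenbasis of $Q_1$, as in Proposition~\ref{cancon}.

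Once freeness holds at order $k-1\ge2$, it persists at all higher orders, because the symbol map is a prolongation and injectivity of symbols prolongs (a Spencer $\delta$-regularity argument). Then $s_{d,k}=r_{d,k}$ for $d>2$, and $s_{2,k}=r_{2,k}+2$ with the constant defect carried along, so $h_{d,k}=r_{d,k}-r_{d,k-1}$ for $k\ge3$. The binomial simplification follows from Pascal's rule.

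The main obstacle is the exact stabilizer dimension at $k=1,2$, in particular explaining the defect of $2$ for $d=2$ and its absence for $d\ge3$. For this we would fall back on an explicit rank computation of the prolonged vector fields at a random rational jet, as the paper does elsewhere with Maple.
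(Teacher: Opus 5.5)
Your treatment of orders $0$ and $1$ matches the paper. The real content of the proposition, however, is the stabilizer dimension at orders $k\ge2$: $\mathfrak{st}_k=0$ for $d\ge3$ and $\dim\mathfrak{st}_k=2$ for $d=2$. This you do not prove. At $k=2$ you defer to a rank computation at a random jet, which settles only finitely many pairs $(d,k)$. For $d=2$ you only assert that the defect is ``carried along''.

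Your prolongation argument (differentiate a homogeneous kernel element of the constant-coefficient symbol map) is sound, but it cannot help for $d=2$. There the top-order map $(S^{k+1}\U^*\otimes\U)\oplus(S^k\U^*\otimes\K)\to S^k\U^*\otimes S^2\U^*$ goes from dimension $3k+5$ to $3k+3$, so it is never injective.

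What is missing is the step the paper uses at each order. The top-order jets $a^{(k+1)},b^{(k)}$ act only on the pure component $S^k\U^*\otimes S^2\U^*$. The previous stabilizer $\mathfrak{st}_{k-1}$, built from $a^{(k)}$ and $b^{(k-1)}$, acts freely on the mixed component $\K^*\otimes S^{k-1}\U^*\otimes S^2\U^*$. For example, at $k=2$ the parameters $b_l$ of $\mathfrak{st}_1$ hit $u_{ij,lt}$ through $-b_l\,u_{ij,tt}$ and the $\lambda_i$-terms, contrary to your remark that $b$ enters only $\p_t$-free directions. Only this guarantees that $\mathfrak{st}_1\simeq\U^*\otimes\K^*$ dies at order $2$ for $d\ge3$, and that the $d=2$ defect stays exactly $2$ rather than accumulating.

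Relatedly, your claim that for $d=2$ the two extra invariants ``already appear at $k=1$'' conflates two mechanisms. The order-$1$ stabilizer has dimension $d$ for every $d$; it produces the $-d$ in $h_{d,2}$ and is resolved at order $2$. The $d=2$ defect at orders $\ge2$ comes from a new stabilizer. (Your arithmetic for $r_{2,1}$ also uses wrong binomials; it is $3+3\cdot4-2\cdot6-3=0$.)

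Second, the mechanism you propose for injectivity is misplaced. The $k$-jet of $b$ does not enter components containing $t$. It enters only the pure component, as $P\,q_1$ with $P\in S^k\U^*$, next to the Killing symbol $\delta A$ of $A=a^{(k+1)}$. That symbol is injective because the prolongations of $\mathfrak{so}(\U)$ vanish. Hence the kernel of the top-order map is $\{P:\ P\,q_1\in\op{Im}\delta\}$, and distinct eigenvalues (a Vandermonde argument) play no role.

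The relevant condition is that the linearized curvature of $P\,q_1$ vanishes, i.e. $(q_1)_{il}P_{jk}+(q_1)_{jk}P_{il}-(q_1)_{ik}P_{jl}-(q_1)_{jl}P_{ik}=0$.
\begin{itemize}
\item For $d\ge3$ and $q_1$ invertible, contracting with $q_1^{-1}$ gives $(d-2)\op{Hess}P+(\op{tr}_{q_1}\op{Hess}P)\,q_1=0$. So $\op{Hess}P=0$, and $P=0$ for $k\ge2$.
\item For $d=2$ only the scalar equation $\lambda_2P_{11}+\lambda_1P_{22}=0$ remains. Its homogeneous solutions of degree $k$ form a $2$-dimensional space. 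Together with the dimension count $3k+5-(3k+3)=2$, this gives exactly the $+2$ (compare the harmonic functions in Example~\ref{conflat}).
\end{itemize}
Your outline needs two ingredients in place of the deferred computer check: the freeness of $\mathfrak{st}_{k-1}$ on the mixed component, and this kernel computation.
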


 \begin{proof}
The action is transitive on $M$, so we fix a point $0\in M$ and consider the vector fields of the form \eqref{eq:Xhat} vanishing at $0$. This stabilizer (isotropy) algebra of 0 is 
$\U^*\otimes\U$, parametrized by $d^2$ constants $a_{x^i}^l$.
Let us split the tangent space 
$T_0M = \U\oplus\K$, where $\U=\langle \p_{x^1},\dots, \p_{x^{d}}\rangle$ and $\K = \langle\p_t\rangle$. 
(This splitting is not canonical but useful in computations.) 
By \eqref{eq:Xhat}, the action of the stabilizer $\U^*\otimes\U$ on $E_0\simeq S^2\U^*$ is standard and hence has an open orbit (Sylvester's law of inertia), implying $h_{d,0}=0$. The stabilizer of a generic point $\theta_0$ in this space
(given by $\op{rank}(u_{ij})=d$) is $\mathfrak{st}_0\simeq \mathfrak{so}(\U)\simeq\Lambda^2\U^*$. 

We consider the vector fields whose lift vanishes on $\theta_0$ and analyze how they act on the fiber of $J^1 E \to J^0 E$ and higher order jet bundles. It is a general fact (see \cite[§1.1]{KL1}) that $\pi_{k+1,k}\colon J^{k+1} E \to J^{k} E$ is an affine bundle with the fiber $\pi_{k+1,k}^{-1}(\theta_{k})\simeq S^kT_{\pi_k(\theta_k)}^*M\otimes E_{\pi_k(\theta_k)}$, $\theta_k \in J^k E$. We will use this fact extensively below, together with the decomposition $T_0M = \U \oplus \K$. To simplify the formulas, we will 
use jet-notations $a^k_i=a^k_{x^i}$, $a^k_{ij}=a^k_{x^ix^j}$, and similar for $b$, while $u_{ij,k}=(u_{ij})_{x^k}$, $u_{ij,t}=(u_{ij})_{t}$, $u_{ij,kt}=(u_{ij})_{x^kt}$, etc. 
If $j < i$, we will naturally identify $a^k_{ji}$ with $a^k_{ij}$. 

Now, fix the point $\theta_0 \in J^0 E$ by $t=0$, $x^i=0$, $u_{ij}=\delta_{ij}$, which lies in the open orbit of $E$.  By the standard formula for jet-prolongation (see 
\cite[formula (2)]{KL1} or \cite[Theorem 4.16]{O}), 
the vector fields that vanish on $\theta_0$ satisfy
 \begin{equation} \label{eq:Xhat1}
\hat X^{(1)}|_{\pi_{1,0}^{-1}(\theta_0)} = -\sum_{i\leq j} \Big( a^j_{il}  + a^i_{jl} +   b_{l} u_{ij,t} +\gamma_{ijl} \Big) \partial_{u_{ij,l}} -  \sum_{i \leq j}(a_i^m u_{mj,t} + a_j^m u_{im,t}) \partial_{u_{ij,t}},
 \end{equation} 
where $a^j_{i} = -a^i_{j}$ and 
$\gamma_{ijl}= a_i^m u_{jm,l} + a_j^m u_{im,l} +  a_l^m u_{ij,m}$. 
The action of the vector fields vanishing on $\theta_0$ corresponds to an action of $(S^2\mathbf{U}^* \otimes \mathbf{U}) \oplus (\mathbf{U}^* \otimes \mathbf{K}) \oplus \mathfrak{st}_0$ on 
 \[ 
T_0^*M \otimes E_0 \simeq (\mathbf{U}^* \otimes S^2 \mathbf{U}^*) \oplus (\mathbf{K}^* \otimes S^2 \mathbf{U}^*).
 \]
From the second term of \eqref{eq:Xhat1}, we see that the previous stabilizer resolves at this step, as orbits of $\mathfrak{st}_0 \simeq \Lambda^2 \mathbf{U}^*$ in $\mathbf{K}^* \otimes S^2 \mathbf{U}^*$ are $\binom{d}{2}$-dimensional. Next, keeping $a_i^m$ fixed in the first term of \eqref{eq:Xhat1}, we see that $(S^2\mathbf{U}^* \otimes \mathbf{U}) \oplus (\mathbf{U}^* \otimes \mathbf{K})$ acts transitively on $\mathbf{U}^* \otimes S^2 \mathbf{U}^*$, and we have $\mathfrak{st}_1 \simeq \mathbf{U}^* \otimes \mathbf{K}^*$, as the first term vanishes if and only if
\begin{equation}\label{eq:sola2}
    2a^j_{il} = -u_{ij,t} b_l + u_{li,t} b_j- u_{jl,t} b_i - \gamma_{ijl} + \gamma_{lij} - \gamma_{jli}.
\end{equation} 
Thus, we can compute the number of independent differential invariants of first order simply by comparing dimensions:  \[h_{d,1} = \dim S^2 \mathbf{U}^* - \dim \Lambda^2 \mathbf{U}^*=d.\] 

It is clear from \eqref{eq:Xhat1} that we can use the action to fix the point $\theta_1$ by $u_{ij,l}=0$,  $u_{ij,t} = \lambda_i \delta_{ij}$, and $\pi_{1,0}(\theta_1)=\theta_0$. The parameters $\lambda_i$ are the eigenvalues of the operator $Q_1$ at the point $\theta_1$, and we will consider a point in general position where they are all different. The vector fields that vanish on $\theta_1$ satisfy 
 \begin{align*}
\hat X^{(2)}|_{\pi_{2,1}^{-1}(\theta_1)} = &- \sum_{i \leq j} \sum_{l \leq m}\Big(a_{ilm}^j + a_{jlm}^i+\lambda_j \delta_{ij} b_{lm}+b_l u_{ij,mt} + b_m u_{ij,lt}\Big) \p_{u_{ij,lm}} \\
&+ \sum_{i \leq j} \left(\lambda_j^2 \delta_{ij} b_{l} +\frac{1}{2} (\lambda_i-\lambda_j) 
\big(\lambda_ib_j\delta_{il}- \lambda_jb_i\delta_{jl}\big) -b_lu_{ij,tt}\right)\p_{u_{ij,l t}},
 \end{align*} 
where we have used \eqref{eq:sola2}. This action can be identified with the action of $(S^3 \mathbf{U}^* \otimes \mathbf{U}) \oplus (S^2 \mathbf{U}^* \otimes \mathbf{K}) \oplus \mathfrak{st}_1$ on 
\[ S^2 T_0^*M \otimes E_0 \simeq (S^2 \mathbf{U}^* \otimes S^2 \mathbf{U}^* )\oplus (\mathbf{K}^* \otimes \mathbf{U}^* \otimes S^2 \mathbf{U}^*) \oplus( S^2 \mathbf{K}^* \oplus S^2 \mathbf{U}^*). \]
There is no action on the third component, while $\mathfrak{st}_1 \simeq \mathbf{U}^* \otimes \mathbf{K}^*$ acts on the second component. Since the action of $\mathfrak{st}_1$ on $\K^* \otimes \U^* \otimes S^2 \U^*$ has $d$-dimensional orbits, the stabilizer $\mathfrak{st}_1$ resolves. The rest acts on the first component. For $d \geq 3$, this action is free. Notice that this is possible since 
\[ \dim \left( (S^3 \mathbf{U}^* \otimes \mathbf{U}) \oplus (S^2 \mathbf{U}^* \otimes \mathbf{K})\right) \leq  \dim \left(S^2 \mathbf{U}^* \otimes S^2 \mathbf{U}^*\right). \]
This gives the expected count $h_{d,2} = r_{d,2}-d$ for $d \geq 3$. However, for $d=2$ we have
\[ \dim \left( (S^3 \mathbf{U}^* \otimes \mathbf{U}) \oplus (S^2 \mathbf{U}^* \otimes \mathbf{K})\right) =11, \quad  \dim \left(S^2 \mathbf{U}^* \otimes S^2 \mathbf{U}^*\right) = 9,  \]
and we get a 2-dimensional stabilizer $\mathfrak{st}_2$, and $h_{3,2} = (r_{3,2}-2)+2 = 7$.

This pattern continues for jets of order $k >2$. We have 
\begin{equation} \label{eq:SkT}
S^k T_0^* \otimes E_0 \simeq (S^k \mathbf{U}^* \otimes S^2 \mathbf{U}^* )\oplus (\mathbf{K}^* \otimes S^{k-1} \mathbf{U}^* \otimes S^2 \mathbf{U}^*) \oplus \cdots \oplus (S^{k}\mathbf{K}^*\otimes S^2 \mathbf{U}^*).
\end{equation}
and the vector fields vanishing at a generic point $\theta_{k-1}$ (satisfying $\pi_{k-1}(\theta_{k-1})=0$) correspond to 
\[(S^{k+1} \mathbf{U}^* \otimes \mathbf{U}) \oplus (S^k \mathbf{U}^* \otimes \mathbf{K}) \oplus \mathfrak{st}_{k-1},  \]
acting on $S^k \U^* \otimes \U$. For $d>2$ and $k \geq 2$, $\mathfrak{st}_k=0$, and the action is free. The formula in the proposition follows from \[ h_{d,k} = \dim(S^{k} \mathbf{U}^* \otimes S^2 \mathbf{U}^*) - \dim\left((S^{k+1} \mathbf{U}^* \otimes \mathbf{U})\oplus (S^k \mathbf{U}^* \otimes \mathbf{K}) \right).\]
For $d = 2$, we have $\dim \mathfrak{st}_k = 2$ for $k \geq 2$. The situation is similar to the case $k=2$ that we looked at above since
\[ \dim(S^{k} \mathbf{U}^* \otimes S^2 \mathbf{U}^*)  = 3k+3, \qquad \dim\left((S^{k+1} \mathbf{U}^* \otimes \mathbf{U})\oplus (S^k \mathbf{U}^* \otimes \mathbf{K}) \right) = 3k+5. \]
Therefore, the action never becomes free. However, $\mathfrak{st}_{k-1}$ always resolves when increasing the order by $1$, as it acts freely on the second component of \eqref{eq:SkT} for $k \geq 2$. Thus, the same formula holds for $h_{2,k}$, the only exception being $h_{2,2}$, where the 2-dimensional stabilizer first appears.  
 \end{proof}

\begin{table}[]
    \centering
\begin{tabular}{|| c | c c c c c||} 
 \hline
 $d \backslash k$ & 1 & 2 & 3 & 4 & 5 \\ [0.5ex] 
 \hline\hline
 2 & 2 & 7 & 16 & 28 & 43  \\ 
 \hline
 3  & 3 & 21 & 65 & 132 & 231   \\ 
 \hline
  4 & 4 & 56 & 190 & 441 & 868  \\ [1ex]  
  \hline
\end{tabular}
    \caption{Values of the Hilbert function $h_{d,k}$ for small values of $d$ and $k$. }
    \label{tab:Hilbert}
\end{table}

\begin{remark}
    Notice that $h_{2,0}+h_{2,1}+h_{2,2} = 9$, which completes the proof of Proposition \ref{prop:2Dlocal}.
\end{remark}

The Hilbert function can be compactly encoded in terms of the corresponding Poincaré function (a rational function for the moduli \cite{BK}), defined by 
 \[ 
p_d(z) = \sum_{i=0}^\infty h_{d,k} z^k.
 \]
The computation of this from the formula for $h_{d,k}$ is elementary when taking advantage of the binomial formula 
\[  \frac{1}{(1-z)^{d}} = \sum_{k=0}^\infty \tbinom{d+k-1}{k} z^k. \]

\begin{proposition}
    The Poincaré function for differential invariants of  Carrollian structures is given by 
    \begin{align*}
        p_2(z) = \frac{z(2+z+z^2-z^3)}{(1-z)^3}
    \end{align*}
    when $d=2$, while for $d > 2$ it is given by 
    \[p_d(z)=\frac{2z^2 + (d^2 + 3d - 2)z - 2d}{2z(1-z)^{d+1}}+\frac{d}{z}-d \cdot  z^2-\frac{d(d-3)}{2} z +\frac{d(d-1)}{2}+1.\]
\end{proposition}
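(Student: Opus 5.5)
The plan is to sum the series $p_d(z)=\sum_k h_{d,k}z^k$ directly from Proposition \ref{prop:Hilbert}. First extend the closed formula for $h_{d,k}$, valid for $k\ge3$, to all $k\ge0$, then correct the three low-order coefficients. Set
\[
f_d(k)=\tbinom{d+1}{2}\tbinom{d+k}{d}-d\,\tbinom{d+k}{d-1}-\tbinom{d+k-1}{d-1},\qquad k\ge0,
\]
so that $h_{d,k}=f_d(k)$ for $k\ge3$. Then
\[
p_d(z)=F_d(z)+\sum_{k=0}^{2}\bigl(h_{d,k}-f_d(k)\bigr)z^k,\qquad F_d(z)=\sum_{k\ge0}f_d(k)z^k .
\]

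To compute $F_d$, I would use the binomial series $\frac{1}{(1-z)^{m}}=\sum_k\binom{m+k-1}{k}z^k$ termwise:
\begin{itemize}
\item $\binom{d+k}{d}=\binom{d+k}{k}$ gives $(1-z)^{-(d+1)}$;
\item $\binom{d+k-1}{d-1}=\binom{d+k-1}{k}$ gives $(1-z)^{-d}$;
\item $\binom{d+k}{d-1}=\binom{d+k}{k+1}$ is the coefficient of $z^{k+1}$ in $(1-z)^{-d}$, so it gives $\frac1z\bigl((1-z)^{-d}-1\bigr)$.
\end{itemize}
Hence
\[
F_d(z)=\frac{d(d+1)}{2(1-z)^{d+1}}-\frac{d}{z(1-z)^{d}}+\frac dz-\frac{1}{(1-z)^{d}}.
\]
Putting the three $(1-z)$-terms over the common denominator $2z(1-z)^{d+1}$ gives the numerator
\[
d(d+1)z-2d(1-z)-2z(1-z)=2z^2+(d^2+3d-2)z-2d .
\]
This reproduces the first fraction of the claimed formula together with the extra $d/z$.

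It remains to check that the polynomial correction $\sum_{k\le2}(h_{d,k}-f_d(k))z^k$ equals $-dz^2-\frac{d(d-3)}2z+\frac{d(d-1)}2+1$ for $d>2$. This uses $h_{d,0}=0$ and $h_{d,1}=d$, while $h_{d,2}=r_{d,2}-d$ by Proposition \ref{prop:Hilbert}, and $f_d(2)=r_{d,2}-r_{d,1}$.
\begin{itemize}
\item Constant term: $f_d(0)=\frac{d(d+1)}2-d^2-1$, so $h_{d,0}-f_d(0)=\frac{d(d-1)}2+1$.
\item Linear term: $f_d(1)=\frac{d(d+1)}2(d+1)-d\binom{d+1}{2}-d=\frac{d(d+1)}2-d$, so $h_{d,1}-f_d(1)=2d-\frac{d(d+1)}2=-\frac{d(d-3)}2$.
\item Quadratic term: $h_{d,2}-f_d(2)=r_{d,1}-d$. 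Evaluating \eqref{eq:Hilbert0} at $k=1$ gives $r_{d,1}=d+1+\binom{d+1}{2}\binom{d+2}{2}-d\binom{d+2}{2}-(d+1)$. The quadratic coefficient of the claim requires this to equal $-d$, i.e.\ $r_{d,1}=0$; I expect this, since $s_{d,1}=h_{d,0}+h_{d,1}=d$ while $h_{d,1}=d$ in \eqref{eq:Hilbert0}'s normalization.
\end{itemize}

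The quadratic term is the main obstacle: it is purely a bookkeeping issue, namely reconciling the normalization of $r_{d,k}$ (which includes the $d+1$ base coordinates) with $h_{d,k}$. If $r_{d,1}$ evaluated literally from \eqref{eq:Hilbert0} is not $0$, I would recompute $h_{d,2}$ directly as $\dim(S^2\U^*\otimes S^2\U^*)-\dim(S^3\U^*\otimes\U)-\dim(S^2\U^*\otimes\K)$, using the last display of the proof of Proposition \ref{prop:Hilbert}, and use that value instead.

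For $d=2$, the coefficient $h_{2,2}$ gains an extra $+2$, so $p_2(z)$ equals the $d>2$ expression at $d=2$ plus $2z^2$. Specializing, this is
\[
\frac{2z^2+8z-4}{2z(1-z)^3}+\frac2z-2z^2+z+2+2z^2 ,
\]
and I would clear denominators to reach $\frac{z(2+z+z^2-z^3)}{(1-z)^3}$. Finally I would cross-check the first few coefficients $2,7,16,28$ against Table \ref{tab:Hilbert}.
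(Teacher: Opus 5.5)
Your route is the paper's own: it sums the Hilbert function of Proposition~\ref{prop:Hilbert} termwise using the binomial series. Your $F_d(z)$, the numerator $2z^2+(d^2+3d-2)z-2d$, and the constant and linear corrections $\tfrac{d(d-1)}2+1$ and $-\tfrac{d(d-3)}2$ are all correct. The $d=2$ reduction also works: it does clear to $z(2+z+z^2-z^3)/(1-z)^3$, with coefficients $2,7,16,28,43$ as in Table~\ref{tab:Hilbert}.

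The one step you left open, the quadratic coefficient, closes once you fix a slip in your evaluation of $r_{d,1}$. In \eqref{eq:Hilbert0} at $k=1$ the second term is $\binom{d+1}{2}\binom{d+2}{d+1}=\binom{d+1}{2}(d+2)$, not $\binom{d+1}{2}\binom{d+2}{2}$. Hence $r_{d,1}=\tfrac12 d(d+1)(d+2)-d\binom{d+2}{2}=0$ identically, so $h_{d,2}-f_d(2)=r_{d,1}-d=-d$, as claimed.

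This vanishing is pure arithmetic, and your heuristic via $s_{d,1}$ does not explain it: in fact $s_{d,1}=d\neq r_{d,1}$, since the lower bound is simply not sharp at order $1$.

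You should also drop the fallback. The expression $\dim(S^2\U^*\otimes S^2\U^*)-\dim(S^3\U^*\otimes\U)-\dim(S^2\U^*\otimes\K)$ gives $0$ for $d=3$ instead of $h_{3,2}=21$. It keeps only one summand of the fiber $S^2T_0^*\otimes E_0$ and ignores the $d$-dimensional orbit of $\mathfrak{st}_1$. The correct count is $\dim(S^2T_0^*\otimes S^2\U^*)-\dim(S^3\U^*\otimes\U)-\dim(S^2\U^*\otimes\K)-d$. Likewise, the last display in the proof of Proposition~\ref{prop:Hilbert} must be read with $S^kT_0^*$ in place of $S^k\U^*$ to reproduce the stated $h_{d,k}$.
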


\section{Fundamental invariants and Symmetry}\label{S4}

We begin with a computation of the Spencer cohomology, which contains
algebraic information on invariants and symmetry of the structure.
We refer to \cite{S,KL1,BK} for details on this formalism, its relation to
the jet framework, and implications for geometric structures.

\subsection{Spencer Cohomology of Carrollian spacetimes}

The Lie equation encodes symmetries of geometric structure of the given type.
Formally, it starts with a symbol of the structure and linear stabilizer. 
For the Carrollian structure, the group $G\subset\op{End}(\T)$
of linear transformations of the tangent space $\T=T_oM$ at a point $o\in M$  preserving $(g,K)$ is $SO(\U)\ltimes \U^*$. Its Lie algebra
$\g=\op{Lie}(G)$ is given by the exact sequence
 $$
0\to \K\otimes\U^*\longrightarrow\g\longrightarrow\Lambda^2\U^*\to0,
 $$
where we identify $\mathfrak{so}(\U)$ with skew-symmetric matrices via $g$; 
we also identify $\U\simeq\U^*$ in what follows.

For the flat model of the Carrollian spacetime 
 \begin{equation}\label{flat}
M=\R^{d+1}(x^1,\dots,x^d,t),\quad K=\p_t,\quad g=\sum_{i=1}^d(dx^i)^2
 \end{equation}
the Lie group $G$ is generated by rotations in $\R^d(x)$ space and
transformations $(x,t)\mapsto(x,t+a\cdot x)$.

Note the canonical exact sequence 
 $$
0\to\K\longrightarrow\T\longrightarrow\U\to0,
 $$
whence $\T\simeq\K\oplus\U$. For this splitting one notes that 
$\K$ is injective part, while $\U$ is projective part. 
Dually, $\U^*$ is injective part of $\T^*\simeq\K^*\oplus\U^*$, 
while $\K^*$ is projective part.
We will exploit such non-canonical splitting in what follows 
(they arise from gradings associated with filtration for spectral sequences used to compute cohomology, but for the sake of simplicity, we omit this abstract technique).

The Cartan--Sternberg prolongation of $\g$ gives the symbolic system 
$\g_k=\g^{(k-1)}:=(\g\otimes S^{k-1}\T^*)\cap(T\otimes S^k\T^*)$. 
In our case, since the prolongation of the orthogonal algebra is trivial, 
we get:
 $$
\g_k=\K\otimes S^k\U^*\subset \T\otimes S^k\T^*,\quad k>1,
 $$
and we complete this by $\g_0=\T$, $\g_1=\g$. 
Note that $\g_0\oplus\g_1$ is a trivial one-dimensional extension
of what Lévy-Leblond named the {\it Carroll algebra} \cite{Duval,Figueroa},
while $\oplus_{k=0}^\infty\g_k$ is an infinite-dimensional Lie algebra
(subalgebra in the algebra of formal vector fields).
This algebra is closely related to BMS algebra, see \cite{Duval2}.

The Spencer complex, associated to this symbolic system $\{\g_k\}$, is 
 $$
\dots\to\g_{i+1}\otimes\Lambda^{j-1}\T^*\stackrel{\delta}\to
\g_i\otimes\Lambda^j\T^*\stackrel{\delta}\to
\g_{i-1}\otimes\Lambda^{j+1}\T^*\to\dots
 $$ 
where $\delta$ is the symbol of the de Rham operator.
Its cohomology is denoted by $H^{i,j}(\g)$.

 \begin{theorem}\label{Spencer1}
The non-trivial Spencer cohomology groups for a Carrollian spacetime are the following:
 \begin{gather*}
H^{0,0}=\T,\quad H^{0,1}=\T\oplus S^2\U^*,\quad H^{0,2}=S^2\U^*,\\
H^{1,2}=\Gamma(2\pi_2),\quad 
H^{1,k}=\Gamma(\pi_2+\pi_{k-1})\oplus\Gamma(\pi_2+\pi_k)\ \text{ for }\ 2<k<d,\\ 
H^{1,d}=\Gamma(\pi_2+\pi_{d-1})\oplus\Gamma(\pi_2),\quad H^{1,d+1}=\Gamma(\pi_2),
 \end{gather*}
where $\pi_i$ is the $i$-th fundamental representation of 
$A_{n-1}=\mathfrak{sl}(U)$ (note projective invariance
of the curvarture modules) and $\Gamma(\varpi)$ is the representation of weight $\varpi=\sum m_i\pi_i$. 
 \end{theorem}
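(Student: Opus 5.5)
The plan is to compute the Spencer cohomology $H^{i,j}$ of the symbolic system $\g_0=\T$, $\g_1=\g$, $\g_k=\K\otimes S^k\U^*$ ($k>1$) degree by degree. Each graded piece of the complex is a $G_0=SO(\U)$-module (after fixing the splitting $\T=\K\oplus\U$), so we work with characters. We compute $\dim H^{i,j}$ as an Euler characteristic combined with the ranks of $\delta$, and we identify the modules by the highest weights that appear.

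The step $i=0$ is elementary. $H^{0,0}=\T$ is immediate. Next, $H^{0,1}=\T\otimes\T^*/\delta(\g_1)=\T\otimes\T^*/\g$. This splits as $\T\oplus S^2\U^*$: the part $\K\otimes\K^*\oplus\U\otimes\K^*$ accounts for the copy of $\T$, and $\U\otimes\U^*/\mathfrak{so}(\U)\simeq S^2\U^*$. For $H^{0,2}$, consider $\T\otimes\Lambda^2\T^*$ modulo $\delta(\g_1\otimes\T^*)$. Since $\mathfrak{so}$ has trivial first prolongation, the torsion of the $\mathfrak{so}(\U)$-part is absorbed entirely, except for the $\K$-valued component $\K\otimes\K^*\wedge\U^*\simeq\U^*\otimes\U^*$. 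There, $\delta(\K\otimes\U^*\otimes\T^*)$ hits only the skew part, leaving $S^2\U^*$. This is exactly the intrinsic torsion $q_1=L_Kg$, as expected.

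For $i\ge1$ the key is a filtration by $\K^*$-degree. We split $\Lambda^j\T^*=\Lambda^j\U^*\oplus\K^*\otimes\Lambda^{j-1}\U^*$ and $\g_k$ into its $\mathfrak{so}$ and $\K\otimes\U^*$ parts. For $k\ge2$ the complex $\K\otimes S^{\bullet}\U^*\otimes\Lambda^\bullet\U^*$ is the Koszul complex tensored with the line $\K$, and it is exact away from bidegree $(0,0)$. The $\K^*$-factor contributes a shifted copy. So the only cohomology in degree $i=1$ comes from the interaction between $\g_1\otimes\Lambda^j\T^*$ and the non-prolonging $\mathfrak{so}(\U)$. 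Concretely, $H^{1,j}$ equals the kernel of $\delta:\mathfrak{so}(\U)\otimes\Lambda^j\U^*\to\U\otimes\Lambda^{j+1}\U^*$ (the Riemannian Bianchi-type map), coupled via the $\K^*$-shift with $\mathfrak{so}(\U)\otimes\K^*\otimes\Lambda^{j-1}\U^*$. We then quotient by the image of $\K\otimes S^2\U^*\otimes\Lambda^{j-1}\T^*$.

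For $j=2$ we expect the Riemann-curvature-type module $\Gamma(2\pi_2)$. For $2<j<d$ we expect two $\mathfrak{sl}$-type hook modules $\Gamma(\pi_2+\pi_{j-1})$ and $\Gamma(\pi_2+\pi_j)$, arising respectively from the $\K^*\otimes\Lambda^{j-1}$ and $\Lambda^j$ pieces. At the ends, the exterior powers truncate: $\pi_d$ is trivial, and for $j=d+1$ only the $\K^*\otimes\Lambda^d\U^*$ piece survives. This yields $\Gamma(\pi_2+\pi_{d-1})\oplus\Gamma(\pi_2)$ and $\Gamma(\pi_2)$. Vanishing of $H^{i,j}$ for $i\ge2$ follows from exactness of the Koszul parts together with $\mathfrak{so}^{(1)}=0$.

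The main obstacle is the precise identification of the $i=1$ cohomology modules, that is, computing the kernel of $\delta$ on $\mathfrak{so}(\U)\otimes\Lambda^j$ modulo the image of $\K\otimes S^2\U^*\otimes\Lambda^{j-1}$. This is the first place where the two summands of $\g$ interact. I would first verify the dimension by an Euler-characteristic count using the (already exact) Koszul pieces, then locate highest weight vectors by decomposing $\Lambda^2\U\otimes\Lambda^j\U$ into $\Gamma(\pi_2+\pi_j)\oplus\Gamma(\pi_1+\pi_{j+1})\oplus\Gamma(\pi_{j+2})$ and checking which components survive. Small cases ($d=2,3$) would be checked by computer.
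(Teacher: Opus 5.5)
Your overall route is the paper's: fix $\T=\K\oplus\U$, discard the exact Koszul pieces $\K\otimes S^{\bullet}\U^*\otimes\Lambda^{\bullet}\U^*$ and their $\K^*$-shifted copy, and reduce $H^{1,\bullet}$ to the Bianchi-type kernel on $\mathfrak{so}(\U)\otimes\Lambda^{\bullet}\U^*$ plus its $\K^*$-shift. However, your $H^{0,2}$ step is wrong as written, and taken literally it gives $H^{0,2}=0$.

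\textbf{The $H^{0,2}$ step.} The component $\K\otimes(\K^*\wedge\U^*)$ is $d$-dimensional, not $\simeq\U^*\otimes\U^*$. Moreover $\delta$ maps $\K\otimes\U^*\otimes\K^*$ isomorphically onto it, and $\K\otimes\U^*\otimes\U^*\to\K\otimes\Lambda^2\U^*$ is onto, so no $\K$-valued torsion survives. Conversely, $\mathfrak{so}(\U)\otimes\T^*$ does not absorb all $\U$-valued torsion. The map $\mathfrak{so}(\U)\otimes\U^*\to\U\otimes\Lambda^2\U^*$ is an isomorphism, but $\mathfrak{so}(\U)\otimes\K^*\to\U\otimes(\K^*\wedge\U^*)\cong\U\otimes\U^*\otimes\K^*$ hits only $\mathfrak{so}(\U)\otimes\K^*$, leaving $S^2\U^*\otimes\K^*$. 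A dimension count already shows this: $\dim\U\otimes\Lambda^2\T^*-\dim\mathfrak{so}(\U)\otimes\T^*=\binom{d+1}{2}$. You have swapped $\K$ and $\U$ in the value slot. The intrinsic torsion is $\U$-valued, consistent with $T_\nabla(\cdot,K)_{\mathrm{sym}}=-\tfrac12Q_1$ later in the paper.

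\textbf{The ``main obstacle''.} The same observation removes what you call the main obstacle. The differential $\delta$ acts only on the form factors, never on the value. Also $\g_1=(\K\otimes\U^*)\oplus\mathfrak{so}(\U)$ with $\mathfrak{so}(\U)\subset\U\otimes\U^*$, and $\g_k=\K\otimes S^k\U^*$ for $k\ge2$. Hence the Spencer complex is a direct sum of two subcomplexes:
\begin{itemize}
\item a $\K$-valued one, namely the Koszul complex of $\U^*$ tensored with $\K\otimes\Lambda^{\bullet}\K^*$, whose cohomology is only $\K\subset H^{0,0}$ and $\K\otimes\K^*\subset H^{0,1}$;
\item a $\U$-valued one, equal to the Spencer complex of $\mathfrak{so}(\U)$ tensored with $\Lambda^{\bullet}\K^*$.
\end{itemize}
The image of $\g_2\otimes\Lambda^{j-1}\T^*$ lies entirely in the $\K$-valued part. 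So nothing is quotiented on the $\mathfrak{so}(\U)$ side, and the two summands of $\g$ never interact.

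Thus, up to those Koszul terms,
\[
H^{i,j}=H^{i,j}(\mathfrak{so}(\U))\oplus\K^*\otimes H^{i,j-1}(\mathfrak{so}(\U)),
\]
where $H^{0,0}(\mathfrak{so}(\U))=\U$, $H^{0,1}(\mathfrak{so}(\U))=S^2\U^*$, $H^{1,j}(\mathfrak{so}(\U))=\Gamma(\pi_2+\pi_j)$ for $2\le j\le d$, and all others vanish. This yields every group in the statement uniformly. In particular, $H^{0,2}$ is the $\K^*$-shift of $H^{0,1}(\mathfrak{so}(\U))$, and $H^{0,j}=0$ for $j\ge3$; your proposal does not address the latter.

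The one genuine input left is the Riemannian fact that $\Lambda^2\U^*\otimes\Lambda^j\U^*\to\U^*\otimes\Lambda^{j+1}\U^*$ is onto for $j\ge1$, with kernel $\Gamma(\pi_2+\pi_j)$ for $j\ge2$. Your highest-weight decomposition does handle that.
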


In particular, the full curvature $H^{\bullet,2}(\g)$ consists of 
the intrinsic torsion $H^{0,2}$ and the intrinsic curvature $H^{1,2}$; 
the system $\g$ is involutive after $(d+1)$ prolongations ($H^{i,\bullet}=0$ for $i>d+1$).

 \begin{proof}
The 0-th Spencer complex consists of one term $\T$, whence $H^{0,0}$.
In the first complex
 $$
0\to(\K\otimes\U^*)\oplus(\Lambda^2\U^*)\longrightarrow\T\otimes\T^*\to0
 $$
we decompose the second nontrivial term into summands
$(\K\otimes\K^*)\oplus(\K\otimes\U^*)\oplus
(\U\otimes\K^*)\oplus(\U\otimes\U^*)$, whence the summands
$\T\simeq (\K\otimes\K^*)\oplus(\U\otimes\K^*)$ and 
$S^2\U^*\simeq(\U\otimes\U^*)/(\Lambda^2\U^*)$.
These encode the Lie equations for $\g$: preservation of $K$ and of $g$.

The second Spencer complex is well-known, and its computation gives
the intrinsic torsion; we refer to \cite{Figueroa,Figueroa1} for details.
The next Spencer complex is
 $$
0\to\K\otimes S^3\U^*\longrightarrow
\K\otimes S^2\U^*\otimes\T^*\longrightarrow
((\K\otimes\U^*)\oplus(\Lambda^2\U^*))\otimes\Lambda^2\T^*
\longrightarrow\T\otimes\Lambda^3\T^*\to0.
 $$
Note that it contains the exact ``Koszul" subcomplex 
$(\K\otimes S^{3-j}\U^*\otimes\Lambda^j\U^*,\delta)$; quotient by it
contains another exact ``Koszul" subcomplex 
$(\K\otimes S^{2-j}\U^*\otimes\Lambda^j\U^*\otimes\K^*,\delta)$.
Since this quotient does not change the cohomology, it suffices to consider
the resulting complex
 $$
0\to(\Lambda^2\U^*\otimes\K^*\otimes\U^*)\oplus
(\Lambda^2\U^*\otimes\Lambda^2\U^*)\longrightarrow
(\U\otimes\K^*\otimes\Lambda^2\U^*)\oplus
(\U^*\otimes\Lambda^3\U^*).
 $$
This, in turn, contains an exact two-term subcomplex, removing which we get
the only nonzero cohomology $H^{1,2}=\op{Ker}(\beta:\Lambda^2\U^*\otimes\Lambda^2\U^*\to\U^*\otimes\Lambda^3\U^*)$.
The map $\beta$ (restriction of $\delta$) is skew-symmetrization,
so its kernel is the space of 4-tensors satisfying the first Bianchi's
identity, and this allows to identify $H^{1,2}$ with the space
of algebraic curvature tensors $\mathcal{R}=\op{Ker}(S^2\Lambda^2\U^*\to\Lambda^4\U^*)$.

The higher Spencer cohomology is computed in a similar way.
 \end{proof}

\subsection{Intrinsic torsion and intrinsic curvature}

The spaces $H^{*,2}(\g)$ are important as a home of obstructions to flatness,
measured as compatibility for the Lie equation on symmetries, but also
as the space of fundamental invariants. Leaving discussion of symmetry till the next section, we concentrate on fundamental invariants. 
For Carrollian spacetimes $H^{*,2}=H^{0,2}\oplus H^{1,2}$ is split
as the space of torsions and the space of curvatures and we discuss them in turn. The story of the intrinsic torsion is well-known, cf.\ \cite{Figueroa,Figueroa1}, but we present it here in our language for completeness; the part about
the intrinsic curvature is apparently novel.

Consider the space $\mathcal{T}$ of {\em compatible} connections on $M$, given by 
the conditions $\nabla K=0$, $\nabla g=0$. Since pointwise the Carrollian structure 
has no moduli, such connections always exist.
Fixing one of them $\nabla\in\mathcal{T}$, any other is obtained by
the gauge transformation $\tilde{\nabla}=\nabla+A$, where 
$A\in\T\otimes\T^*\otimes\T^*$, $\T\otimes\T\ni(X,Y)\mapsto A_XY\in\T$,
is a tensor satisfying the following conditions to ensure 
conservation of compatibility:
 $$
A_XK=0,\quad g(A_XY,Z)+g(Y,A_XZ)=0\quad\forall X,Y,Z\in\T. 
 $$
Denote the space of such tensors by $\mathcal{A}$. 
The intrinsic torsion is an invariant part of torsion $T_{\tilde\nabla}$, when $A=\tilde\nabla-\nabla$ variates over $\mathcal{A}$. Note that $T_\nabla(\cdot,K)$ is an endomorphism of $\T$
and it can be naturally split into a symmetric and skew-symmetric parts. We claim that the former is independent of $\nabla\in\mathcal{T}$, while the residual gauge group acts transitively on the latter.

 \begin{proposition}
The intrinsic torsion is $T_\nabla(\cdot,K)_{\op{sym}}=-\tfrac12Q_1\in S^2\U^*=H^{0,2}(\g)$.
 \end{proposition}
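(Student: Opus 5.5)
We compute $T_\nabla(X,K)$ directly from the compatibility conditions and show that only the symmetric part of the resulting endomorphism is gauge invariant. Fix $\nabla\in\mathcal{T}$. Since $\nabla K=0$, we get $T_\nabla(X,K)=\nabla_XK-\nabla_KX-[X,K]=-\nabla_KX+L_KX$. The map $X\mapsto T_\nabla(X,K)$ is tensorial in $X$. It also kills $K$, since $T(K,K)=0$. So it defines an endomorphism of $\T$ vanishing on $\K$. Composing with the projection $\T\to\U$ gives an endomorphism $S$ of $\U$; here we check that $X\mapsto T(X,K)\bmod\K$ is well defined on $\U=\T/\K$. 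Only the $\U$-part matters: the $\K$-component is the part of $H^{0,1}$-type data, or is absorbed by the gauge freedom, and the statement concerns $S^2\U^*$.

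Next we compute the metric-symmetric part of $S$ by evaluating $q_1=L_Kg$ on $X,Y$. Using $\nabla g=0$,
\[
(L_Kg)(X,Y)=K(g(X,Y))-g(L_KX,Y)-g(X,L_KY)=g(\nabla_KX-L_KX,Y)+g(X,\nabla_KY-L_KY).
\]
Substituting $\nabla_KX-L_KX=-T_\nabla(X,K)$ gives
\[
q_1(X,Y)=-g(T(X,K),Y)-g(X,T(Y,K)),
\]
that is, $g\bigl((S+S^{*})X,Y\bigr)=-g(Q_1X,Y)$, where $S^*$ is the $g$-adjoint on $\U$. Hence $S_{\op{sym}}=\tfrac12(S+S^*)=-\tfrac12Q_1$. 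This depends only on $(g,K)$, so it is independent of the choice of $\nabla\in\mathcal{T}$.

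It remains to show that the skew part is pure gauge, which justifies calling the symmetric part the intrinsic torsion and identifying it with $H^{0,2}(\g)\simeq S^2\U^*$. For $\tilde\nabla=\nabla+A$ with $A\in\mathcal{A}$ we have $T_{\tilde\nabla}(X,K)=T_\nabla(X,K)+A_XK-A_KX=T_\nabla(X,K)-A_KX$. Here $A_K$ ranges over $\mathfrak{so}(\U)$ (with the $\K$-valued part free as well), so the skew part of $S$ can be set to any value, while the symmetric part is unchanged. This agrees with the Spencer computation in Theorem~\ref{Spencer1}: $H^{0,2}$ is $S^2\U^*$, the cokernel of $\delta:\g\otimes\T^*\to\T\otimes\Lambda^2\T^*$ restricted to the $\K^*\wedge\U^*$ component, and $\delta(A)$ realises exactly the skew endomorphisms.

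The main obstacle is bookkeeping: we must confirm that $\delta(\g\otimes\T^*)$ fills all of $\T\otimes\Lambda^2\T^*$ apart from the symmetric $\U^*\otimes\U^*\otimes\K^*$ piece. The $\U\otimes\Lambda^2\U^*$ piece is handled by the standard Levi-Civita argument, which shows that $\mathfrak{so}(\U)\otimes\U^*$ fills it. The $\K$-valued pieces are filled by $\K\otimes\U^*\otimes\T^*$. We would check these by a dimension count against $\dim H^{0,2}=\binom{d+1}{2}$.
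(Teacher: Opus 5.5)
Your proposal is correct and follows essentially the same route as the paper. It uses the gauge formula $T_{\tilde\nabla}(X,K)=T_\nabla(X,K)-A_KX$, where the $\U$-part of $A_K$ ranges over $\mathfrak{so}(\U)$ and the $\K$-part is free, to show only the symmetric part survives, and the identity $q_1(X,Y)=-g(T_\nabla(X,K),Y)-g(X,T_\nabla(Y,K))$, from $\nabla g=0$ and $\nabla K=0$, to identify that part with $-\tfrac12Q_1$. The only difference is order: you get gauge independence directly from this identity holding for every $\nabla\in\mathcal{T}$, whereas the paper first normalizes the torsion (also killing $T_\nabla|_\U$, which this statement does not need).
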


 \begin{proof}
Let us split $\T=\K\oplus\U$ and decompose $A=A^K+A^U$ accordingly
for the values of $A_XY$. Then $A^K_X$ is an arbitrary 1-form on $\U$,
while $A^U_X$ is in $\mathfrak{so}(\U)$ for any $X\in\T$. 
For $X\in\U$ we have:
 $$
T_{\tilde\nabla}(X,K)= T_\nabla(X,K)-A^K_KX-A^U_KX.
 $$
The term $A^K_KX$ is an arbitrary vector in $\K$, so this component can be 
eliminated. The operator $A^U_K$ is arbitrary orthogonal, which eliminates the
skew-symmetric part in $\U$-value of this part of torsion, so that we get
$T_\nabla(\cdot,K)\in\op{End}_{\op{sym}}(\U)$.
For $X,Y\in\U$ we have:
 $$
T_{\tilde\nabla}(X,Y)= T_\nabla(X,Y)+(A^K_XY-A^K_YX)+(A^U_XY-A^U_YX).
 $$
The first parenthetical expression realizes arbitrary 2-form on $\U$,
while the second parenthetical expression realizes arbitrary tensor
in $\U\otimes\Lambda^2\U^*$. Thus we can completely eliminate this part of torsion:
$T_\nabla|_\U=0$. 

The remaining gauge is subject to conditions $A_K=0$ and $A_XY=A_YX$ for $X,Y\in\U$,
which describes the first prolongation $\g^{(1)}=\K\otimes S^2\U^*$. Finally, 
for $X,Y\in\U$ we compute
 \begin{multline*}
q_1(X,Y)=(L_Kg)(X,Y)=K\cdot g(X,Y)-g(L_KX,Y)-g(X,L_KY)\\
=g(\nabla_KX-L_KX,Y)-g(X,\nabla_KY-L_KY)=g(T_\nabla(K,X),Y)+g(X,T_\nabla(K,Y)),
 \end{multline*}
which identifies the invariant part of torsion with what we call
the intrinsic torsion in Section \ref{S2}. 
 \end{proof}

Given a choice of subbundle $\U\subset TM$ (splitting of the exact sequence for the
screen bundle) and the restrictions in the normalization of the proposition, let us
call a compatible connection $\nabla\in\mathcal{T}$ {\em minimal} if
$T_\nabla|_\U=0$,
$\op{Im}\bigl(T_\nabla(\cdot,K)\bigr)\subset\U$ and
$T_\nabla(\cdot,K)\in\op{End}(\U)$ is $g$-symmetric.

Now let us discuss the intrinsic curvature. In general, the next obstruction
from $H^{1,2}(\g)$ is not well-defined unless the previous $Q_1\in H^{0,2}(\g)$
vanishes. Actually, if $Q_1=0$ then $g$ is projectable along $\K$ to the 
(local or global) quotient-manifold $U$ and then the fundamental invariant is 
the curvature $R_g\in\mathcal{R}$ on $U$. Note the flatness of the Carrollian structure
is equivalent to the vanishing of the intrinsic torsion $Q_1$ and 
the intrinsic curvature $R_g$, the reason to call them fundamental invariants.

However for generic Carrollian spacetimes we still have well-defined intrinsic curvature.

 \begin{proposition}
If $I_2^1=K\cdot I_1^1\neq0$, denote $\U=\op{Ker}(dI_1^1)$ a canonical choice of 
the screen subbundle and by $\pi:\T\to\U$ the canonical projection along $\K$.
Restrict the connection $\nabla\in\mathcal{T}$ to be $\U$-minimal. 
Then the tensor $\pi R_\nabla|_\U\in\mathcal{R}=H^{1,2}(\g)$ is well-defined.
 \end{proposition}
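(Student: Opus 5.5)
We need to show that the projected restricted curvature does not depend on which $\U$-minimal compatible connection is chosen, and that it lands in $\mathcal{R}$. The plan is to determine the residual gauge freedom explicitly, compute how $\pi R_\nabla|_\U$ changes under it, and show the change vanishes. The same computation should also yield the Bianchi symmetry.

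\textbf{Residual gauge.} The proof of the preceding proposition shows that, once $\U\subset TM$ is fixed, two minimal connections differ by a tensor $A\in\mathcal{A}$ with $A_K=0$ and $A_XY=A_YX$ for $X,Y\in\U$. Writing $A=A^K+A^U$, the $\mathfrak{so}(\U)$-valued part $A^U_X$, being symmetric in $X,Y\in\U$ and skew in $Y,Z$, vanishes on $\U\times\U$. This is the triviality of the first prolongation of $\mathfrak{so}$.

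We must also check the mixed components $A^U_YK$. This is where the requirement that $T_\nabla(\cdot,K)$ has image in $\U$ and is $g$-symmetric enters. Preservation of $\op{Im}\subset\U$ kills $A^K_KX$. Symmetry, together with $A_K=0$, kills the skew part of $A^U_XK$ once $A_XK=0$ is used.

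So the residual gauge is $A=\phi\otimes K$ on $\U\times\U$, with $\phi\in C^\infty(M,S^2\U^*)$. That is, $\tilde\nabla_XY=\nabla_XY+\phi(X,Y)K$, matching $\g^{(1)}=\K\otimes S^2\U^*$. Here the canonical choice $\U=\op{Ker}(dI_1^1)$ is essential: it removes the extra gauge that would come from changing the splitting. The hypothesis $I_2^1=K\cdot I_1^1\neq0$ guarantees that this kernel is transversal to $\K$, so it is a genuine screen subbundle.

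\textbf{Change of curvature.} Next compute $R_{\tilde\nabla}(X,Y)Z$ for $X,Y,Z\in\U$. Use the standard formula
\[
R_{\tilde\nabla}=R_\nabla+d_\nabla A+[A\wedge A],
\]
with a care for the torsion term. The quadratic term is $A_X A_Y Z=\phi(Y,Z)A_XK=0$. The linear term is $(\nabla_XA)_YZ-(\nabla_YA)_XZ$, plus torsion contributions $A_{T(X,Y)}Z$. Since $T_\nabla|_\U=0$, only the $\K$-part of the bracket could enter, and $A_K=0$ kills it. All surviving terms take values in $\K$, because $\nabla K=0$ gives $(\nabla_X\phi\otimes K)=\nabla_X\phi\otimes K$. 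They also need $\nabla_XY$ modulo $\K$, which stays in $\U$ by minimality up to $\K$-terms annihilated by $\phi$ after projection. Hence $\pi R_{\tilde\nabla}|_\U=\pi R_\nabla|_\U$, so the tensor is well defined.

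\textbf{Membership in $\mathcal{R}$ and the main obstacle.} Finally, $\pi R_\nabla|_\U$ lies in $\Lambda^2\U^*\otimes\mathfrak{so}(\U)$ because $\nabla g=0$ and $\nabla K=0$. It satisfies the first Bianchi identity by the algebraic Bianchi identity
\[
\mathfrak{S}R(X,Y)Z=\mathfrak{S}\bigl(T(T(X,Y),Z)+(\nabla_XT)(Y,Z)\bigr).
\]
On $\U$ the right-hand side involves $T_\nabla|_\U=0$ and $T_\nabla(\cdot,K)$ only through $\K$-components of $T(X,Y)$. Those components are $\K$-valued, or vanish, since $\U$ is not assumed integrable.

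This last point is the main obstacle: $\U=\op{Ker}(dI_1^1)$ is integrable, since it is the kernel of an exact form, so $[X,Y]\in\U$ and hence $T(X,Y)\in\U$ is consistent with $T|_\U=0$. I would exploit precisely this integrability to dispose of the problematic terms $T(T(X,Y),Z)$. After projecting by $\pi$, the cyclic sum vanishes, so $\pi R_\nabla|_\U\in\mathcal{R}$, identified with $H^{1,2}(\g)$ by Theorem~\ref{Spencer1}.
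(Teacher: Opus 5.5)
Your well-definedness argument is the paper's proof. The residual gauge between $\U$-minimal connections is $A=\phi\otimes K$ with $\phi\in S^2\U^*$, every term by which $R_{\tilde\nabla}(X,Y)Z$ differs from $R_\nabla(X,Y)Z$ for $X,Y,Z\in\U$ is $\K$-valued, and $\pi$ removes it.

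Two slips there are harmless for this step. First, $A^U_XK=0$ is just $A_XK=0$; it is the $g$-symmetry of $T_\nabla(\cdot,K)$ that kills the skew operator $A^U_K$. Second, minimality does \emph{not} make $\nabla_XY\in\U$ for $X,Y\in\U$: the $\K$-component of $\nabla_XY$ is shifted by $\phi$ and is generally nonzero.

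The paper stops at well-definedness and leaves the first Bianchi identity implicit. Your attempt to supply it finds the right ingredient, integrability of $\U$, but attaches it to the wrong term, so as written that step does not go through.

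The terms $\mathfrak{S}\,T(T(X,Y),Z)$ vanish identically. Minimality imposes $T_\nabla(X,Y)=0$ for all $X,Y\in\U$, $\K$-component included, with or without integrability, because the $\K$-valued gauge part $A^K_XY-A^K_YX$ is an arbitrary 2-form. The terms that matter are the ones you skip, $(\nabla_XT)(Y,Z)$. They do not vanish, because $\nabla$ need not preserve $\U$. Write $\nabla_XY\equiv\kappa(X,Y)K \pmod{\U}$ for $X,Y\in\Gamma(\U)$, and $S=T_\nabla(K,\cdot)|_\U$ $(=\tfrac12Q_1)$. Then
\[
(\nabla_XT)(Y,Z)=-\kappa(X,Y)\,SZ+\kappa(X,Z)\,SY,
\]
which is $\U$-valued and survives $\pi$. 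Its cyclic sum is $-\mathfrak{S}\bigl(\kappa(X,Y)-\kappa(Y,X)\bigr)SZ$. The condition $T_\nabla|_\U=0$ identifies $\kappa(X,Y)-\kappa(Y,X)$ with the $\K$-component of $[X,Y]$.

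This is exactly where integrability of $\U=\op{Ker}(dI_1^1)$ is used: it makes $\kappa$ symmetric, so $\mathfrak{S}R_\nabla(X,Y)Z=0$ on $\U$ even before projecting. For a non-integrable screen with $d\ge3$ and $Q_1$ invertible this defect is nonzero, so integrability is genuinely needed. With this replacement your argument proves both well-definedness and $\pi R_\nabla|_\U\in\mathcal{R}$.
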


 \begin{proof}
Variations in the class of connections of the proposition are given by 
$\tilde\nabla=\nabla+A$, where the tensor $A\in\K\otimes S^2\U^*$. Thus
$A_X\in \K\otimes\U^*$ implying $[A_X,A_Y]=0$ for $X,Y\in\U$, and we also have:
$A_{[X,Y]}\in \K\otimes\U^*$, $\nabla_XA_Y\in \K\otimes\U^*$.
Since 
 $$
R_{\tilde\nabla}(X,Y)=R_\nabla(X,Y)+(\nabla_XA_Y-\nabla_YA_X)+[A_X,A_Y]-A_{[X,Y]}
 $$
the post-composition with projection $\pi$ eliminates the entire freedom.
 \end{proof}

Note that the statement holds true in every case when we have a canonical choice of
a complementary subbundle to $\K\subset TM$ to be identified with the screen bundle,
for instance it works under mild nondegeneracy assumptions on the intrinsic torsion $Q_1$.

The intrinsic curvature is the second fundamental invariant tensor field and a source of scalar differential invariants  (by doing full contractions with an invariant frame, such as  $V_i$ or $\xi_i$ for $i\leq d$ from Section \ref{S3}). 

 \begin{remark}
Proposition  \ref{prop:2Dlocal} (and \ref{prop:2Dglobal}) does not straightforwardly generalize for $d>2$. Actually, the Hilbert polynomial $h_{d,2} = \tfrac{1}{12}d^4+ \tfrac12d^3+ \tfrac{5}{12}d^2-d$ has degree 4, while the number of generators used there, namely the structure functions $c_{ij}^l=-\hat d\omega^l(V_i,V_j)$, is $\tfrac{1}{2}d^2(d+1)$, which has cubic growth in $d$.
The number of components of the intrinsic curvature (in an invariant frame) is $\tfrac{1}{12}d^2(d^2-1)$. 
Since $\tfrac{1}{12}d^2(d^2-1)+\tfrac{1}{2}d^2(d+1)=h_{d,2}+d$, these may suffice to generate all second order invariants. 
However, the intrinsic curvature is defined through the second-order 1-form $dI_1^1$, whence some curvature components are of third order. 
One still has to verify if there are enough independent
remaining invariants of the second order.
 \end{remark}

\subsection{Symmetry bounds and examples}

The formal theory of PDEs helps identify the maximal symmetry of the structure. 
The affine characteristic variety (over $\C$) consists of projections to the second component of the set of rank one elements in $\g\subset\T\otimes\T^*$, so it is $\op{Char}(\g)=\U^*$. This has dimension $d$ and degree 1, hence 
the general solution depends on at most $1$ functions of $d$ arguments.

This latter is indeed realizable for the flat model \eqref{flat}, where the symmetries are
 $$
f(x^1,\dots,x^d)\p_t,\quad \p_{x^k},\quad x^i\p_{x^j}- x^j\p_{x^i}\quad (1\leq k\leq d,1\leq i<j\leq d).
 $$
In fact, this largest symmetry can only happen for the flat model. Indeed, 
the intrinsic torsion $q_1$ controls the size of the symmetry $f\p_t+\dots$, 
where $\langle\p_t\rangle=\K$ is the null line direction internal to the structure. 

In the next examples of a Carrollian structure we use adapted coordinates $(x,t)$, where $K=\p_t$. We first consider dimension 3 ($d=2$), where 
the maximal size symmetry depends on 1 function of 2 arguments.

 \begin{example}\label{conflat}
The submaximal symmetry is achieved for
 $$
g=e^{\lambda t}\bigl((dx^1)^2+(dx^2)^2\bigr).
 $$
Its symmetry algebra $\mathfrak{s}$ consists of the following vector fields
 $$
\lambda f_{x^2}\p_{x^1} +\lambda f_{x^1}\p_{x^2} -2f_{x^1x^2}\p_t,
 $$
where $f=f(x^1,x^2)$ is a harmonic function. 
The algebra $\mathfrak{s}$ is parametrized by 2 functions of 1 argument,
and contains the subalgebra
$\op{Euc}(2)=\mathfrak{so}(2)\ltimes\R^2$ generated by functions $f\in\{x^1,x^2,(x^1)^2-(x^2)^2\}$.
 \end{example}

 \begin{example}
The Carrollian structure 
 $$
g=e^{\lambda t}(dx^1)^2+(dx^2)^2
 $$
has nondegenerate torsion. Its symmetry algebra $\mathfrak{s}$ consists of the following vector fields
 $$
\lambda f\p_{x^1} +c\,\p_{x^2} -2f'\p_t,
 $$
where $f=f(x^1)$ is arbitrary function. 
Thus $\mathfrak{s}$ is parametrized by 1 functions of 1 argument.
 \end{example}

 \begin{example}\label{exlmbda12}
For nonzero $\lambda_1\neq\lambda_2$ the Carrollian structure 
 $$
g=e^{\lambda_1t}(dx^1)^2+e^{\lambda_2t}(dx^2)^2
 $$
has strongly nondegenerate torsion. Its symmetry algebra $\mathfrak{s}$ generated by
 $$
\p_{x^1},\ \p_{x^2},\ \p_t-\lambda_1x^1\p_{x^1}-\lambda_2x^2\p_{x^2}.
 $$
This $\mathfrak{s}$ is parametrized by 3 constants.
 \end{example}

Now we give an implication of the concept in Definition \ref{ndgq1}
on the symmetry algebra $\mathfrak{s}$.

 \begin{theorem}
A Carrollian structure $(M,g,K)$ with strongly nondegenerate intrinsic torsion $Q_1$ 
is of finite type, that is, it has finite-dimensional symmetry.
In fact, $\dim\mathfrak{s}\leq d+1$, so if $(M,g,K)$ is homogeneous
then the symmetry acts locally simply transitively. 
 \end{theorem}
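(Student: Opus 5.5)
The plan is to show that a symmetry is determined by its value at a single point $o\in M$, i.e.\ by $X(o)\in T_oM$. This immediately gives $\dim\mathfrak{s}\le \dim M=d+1$. If moreover $\mathfrak{s}$ acts transitively, then $\dim\mathfrak{s}\ge d+1$, so equality holds, the isotropy at $o$ is trivial, and the action is locally simply transitive. So everything reduces to proving that the isotropy subalgebra $\mathfrak{s}_o=\{X\in\mathfrak{s}:X(o)=0\}$ vanishes.

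\textbf{Step 1: reduce the isotropy to a $d$-dimensional candidate.} Work in the adapted coordinates \eqref{straightened}, where every symmetry has the form $X=a^i(x)\p_{x^i}+b(x)\p_t$. Take $X\in\mathfrak{s}_o$. The linear part of $X$ at $o$ lies in $\g=\mathfrak{so}(\U)\ltimes(\K\otimes\U^*)$. Since $X$ preserves $g$ and $K$, it preserves $q_1=L_Kg$, hence $Q_1$ at $o$. Its $\mathfrak{so}(\U)$-component therefore commutes with $Q_1(o)$. Because the spectrum of $Q_1$ is simple, this component is diagonal in the orthonormal eigenbasis, and being skew it vanishes (the same argument as in the flatness theorem). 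This is the resolution of $\mathfrak{st}_0$ in the proof of Proposition~\ref{prop:Hilbert}. What survives is the $\K\otimes\U^*$-part, parametrized by $b_l=b_{x^l}(o)$. The invariance $L_Xg=0$ at first order then forces the second derivatives $a^j_{il}$ to be given by \eqref{eq:sola2} in terms of $b_l$. So the $2$-jet of $X$ at $o$ is determined by the vector $(b_1,\dots,b_d)$ together with higher terms that do not affect the $(x,t)$-mixed components.

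\textbf{Step 2: kill $b$ using strong nondegeneracy.} Impose that $\hat X^{(2)}$ vanishes at the $2$-jet $\theta_2=j^2_o(g,K)$. Normalize $\theta_1$ as in the proof of Proposition~\ref{prop:Hilbert} ($u_{ij,l}=0$, $u_{ij,t}=\lambda_i\delta_{ij}$). Then use the explicit formula for $\hat X^{(2)}$ there, restricted to the $\p_{u_{ij,lt}}$-components. These components involve only $b_l$, because the $a^j_{ilm}$ enter only the $\p_{u_{ij,lm}}$-part. For $i\ne j$ and $l=i$ the coefficient is
$$\tfrac12(\lambda_i-\lambda_j)\lambda_i b_j - b_i u_{ij,tt},$$
and for $l\notin\{i,j\}$ it is $-b_l u_{ij,tt}$. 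The key observation is that the factor $\tfrac12(\lambda_i-\lambda_j)\lambda_i$ is nonzero exactly under strong nondegeneracy: $\lambda_i\ne\lambda_j$ and $\lambda_i\neq0$. Fix $j$ and run $i$ over the indices $\ne j$ (possible since $d\ge2$). We obtain a linear system for $b$ whose "principal part" is invertible. We also have the diagonal equations $\lambda_i^2 b_l-b_lu_{ii,tt}=0$, which hold for every $l$. Together these should force $b=0$, hence $\mathfrak{s}_o=0$.

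\textbf{Main obstacle.} The main difficulty is that the $u_{ij,tt}$-terms (the off-diagonal part of $q_2$) can in principle conspire to make this system singular at special points. To avoid this, I will not use the raw coordinate system alone. Instead, I will exploit invariant objects. A symmetry preserves the canonical flat connection $\nabla$ of Proposition~\ref{cancon}, the normalized eigen-coframe $\omega^1,\dots,\omega^d$ (it preserves it infinitesimally, since the residual ambiguity is a finite group of signs and reorderings), and the invariant 1-forms obtained by contracting $K$ into $d\omega^i$.

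Writing $g=\sum(\omega^i)^2$ and $q_1=\sum\lambda_i(\omega^i)^2$, the component $X=b_lx^l\p_t+\dots$ acts on $\omega^i$ at $o$ by $\omega^i\mapsto \omega^i+\tfrac12\lambda_i b_i\,dt$-type shifts. More precisely, it shifts the $\K^*$-component of the frame data by terms proportional to $\lambda_i b$. The first thing I will try is to show that the $\K^*\otimes\U^*$-part of the invariant data $(\nabla,\omega^i)$ transforms under $b$ by a map of the form $b\mapsto(\lambda_ib_j\pm\lambda_jb_i)_{ij}$. The map $b\mapsto(\lambda_ib_i)_i$ is injective as soon as all $\lambda_i\ne0$. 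Invariance would then give $b=0$ without any genericity condition on $q_2$.

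If this shortcut fails, I will fall back on the diagonal equations above. These give $b_l=0$ for all $l$ unless $\lambda_i^2=u_{ii,tt}$ for every $i$. In that degenerate case, I will combine the $l=i$ and $l=j$ off-diagonal equations for each pair $i\ne j$; their $2\times2$ determinant involves $\lambda_i\lambda_j(\lambda_i-\lambda_j)^2$.
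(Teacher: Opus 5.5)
\textbf{There is a genuine gap in Step 2, and it cannot be closed: for $d=2$ the statement itself is false.}

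On $M=\R^3(u,v,t)$ take $K=\p_t$ and $g=(dv+2t\,du)^2+du^2$. Then $q_1=L_Kg=4\,du\,(dv+2t\,du)$. In the $g$-orthonormal coframe $\omega^\pm=\tfrac{1}{\sqrt2}(dv+2t\,du\pm du)$ one gets $q_1=2(\omega^+)^2-2(\omega^-)^2$. So $Q_1$ has eigenvalues $\pm2$, and the torsion is strongly nondegenerate.

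Yet for every function $H$ of one variable, the field $Z_H=H'(u)\p_t-2H(u)\p_v$ commutes with $\p_t$. It also satisfies $L_{Z_H}du=0$ and $L_{Z_H}(dv+2t\,du)=-2H'(u)\,du+2H'(u)\,du=0$. Hence $\mathfrak{s}\supset\langle\p_u\rangle\oplus\{Z_H\}$ is infinite-dimensional. For instance, $Z_{u^2}=2u\p_t-2u^2\p_v$ is a nonzero isotropy element at the origin with $b_u=2\neq0$.

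\textbf{Where your Step 2 breaks.} This is exactly the degeneration you called the main obstacle.

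For a pair $i\neq j$, your two equations are $u_{ij,tt}b_i-\tfrac12(\lambda_i-\lambda_j)\lambda_ib_j=0$ and $u_{ij,tt}b_j+\tfrac12(\lambda_i-\lambda_j)\lambda_jb_i=0$. Their determinant is $u_{ij,tt}^2+\tfrac14\lambda_i\lambda_j(\lambda_i-\lambda_j)^2$. Strong nondegeneracy does not control it: it vanishes for suitable $u_{ij,tt}$ whenever $\lambda_i\lambda_j<0$.

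The diagonal equations give nothing when $K(\lambda_i)=0$, since $u_{ii,tt}-\lambda_i^2=K(\lambda_i)$ at the normalized jet. In the example, after an orthogonal change of $(u,v)$ diagonalizing $Q_1$ at the origin, one has $\lambda=\pm2$, $u_{ii,tt}=4=\lambda_i^2$ and $u_{12,tt}=4$. The determinant is $16-16=0$.

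Your shortcut rests on an incorrect formula:
\begin{itemize}
\item The isotropy element $K\otimes\beta$ acts trivially on the $\omega^i$, since they annihilate $K$.
\item It acts on $d\omega^i$ by $-\beta\wedge\iota_Kd\omega^i=-\beta\wedge L_K\omega^i$.
\item $L_K\omega^i$ is not $\tfrac12\lambda_i\omega^i$. Rather, $L_K\omega^i=\tfrac12\lambda_i\omega^i+\sum_jS^i_j\omega^j$, where $S$ is skew with $S^i_j=q_2(e_i,e_j)/(\lambda_i-\lambda_j)$ and $(e_i)$ is dual to $(\omega^i)$.
\end{itemize}
So $\beta$ is detected iff $\operatorname{rank}(\tfrac12\Lambda+S)\ge2$. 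For $d=2$ this fails exactly when $\tfrac14\lambda_1\lambda_2+(S^1_2)^2=0$, which is the same condition as above. In the example, $L_{\p_t}\omega^\pm=\omega^+-\omega^-$.

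\textbf{The paper's proof has the same hole.} It asserts $\p_t\omega^k=\tfrac12\lambda_k\omega^k$, i.e.\ $S=0$. Once $S$ is restored, its step ``the $1$-jet of $f$ is expressed through lower jets'' requires inverting a $2\times2$ system. That system degenerates under the same condition.

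\textbf{What survives.} Suppose $\operatorname{rank}(\tfrac12\Lambda+S)\ge2$.
\begin{itemize}
\item This is automatic for $d\ge3$: $\Lambda=(\tfrac12\Lambda+S)+(\tfrac12\Lambda+S)^{T}$ has rank $d$, so $\operatorname{rank}(\tfrac12\Lambda+S)\ge d/2$.
\item For $d=2$ it holds, e.g., when $\lambda_1\lambda_2>0$.
\end{itemize}
Then $\beta\mapsto(\beta\wedge L_K\omega^i)_i$ is injective. You can therefore fix a transversal form $\omega^0$ with $\omega^0(K)=1$: require the $\Lambda^2\U^*$-parts of the $d\omega^i$ to be $g$-orthogonal to the image of this map. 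This gives an invariant coframe $(\omega^0,\dots,\omega^d)$. The condition $L_X\omega^a=0$ is then a linear ODE for the functions $\omega^a(X)$. Hence $X(o)=0$ forces $X\equiv0$ on a connected domain where the rank condition holds, and $\dim\mathfrak{s}\le d+1$.

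This also fills a second hole in your plan. Showing $b_l=0$ only kills the $1$-jet of an isotropy element. Concluding $\mathfrak{s}_o=0$ requires control of all higher jets (such as $b_{lm}$ and $a^j_{ilm}$). Your plan does not provide that, and the invariant coframe does.
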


 \begin{proof}
Let $\omega^i$ be the orthonormal eigen-coframe (locally defined up to $\pm$), 
in which $g=\sum_{i=1}^d(\omega^i)^2$ and $q_1=\sum_{i=1}^d\lambda_i(\omega^i)^2$.
Let us choose a foliation $U$ of $M$ complementary to $K$ (does not matter which)
and let $e_i$ be the dual coframe tangent to this $U$. We can decompose any
infinitesimal symmetry as $X=f\p_t-\sum_1^da^ie_i$, where $K=\p_t$ and
$a^1,\dots,a^d,f\in C^\infty(M)$. Denote $c^k_{ij}=-2\omega^k(e_i,e_j)$
the structure functions. 

Note that the symmetry of $(g,K)$ must preserve $\omega^i$ and $\lambda_i$.
Hence $\p_t\omega^k=\tfrac12\lambda_k\omega^k$ (no summation) and we have
(no summation by repeated index unless indicated)
 \begin{equation}\begin{split}\label{spleq}
(L_X\omega^i)(e_j) &=(\iota_Xd\omega^i+d\iota_X\omega^i)(e_j)
=\tfrac12f\lambda_i\delta^i_j-\sum_k a^kc^i_{kj}-e_j(a^i)=0,\\
(L_X\omega^i)(\p_t) &=(\iota_Xd\omega^i+d\iota_X\omega^i)(\p_t)
=\tfrac12f\lambda_i a^i-\p_t(a^i)=0.
 \end{split}\end{equation}
Thus $\p_ta^i$, $e_j(a^i)$ and $\frac1{\lambda_i}e_i(a^i)-\frac1{\lambda_j}e_j(a^j)$
for $i\neq j$ express through 0-jets, and consequently all 2-jets of $a^i$
and 1-jet of $f$ are expressed through lower jets. This implies the claim.
 \end{proof}

 \begin{remark}
An alternative argument for finite-type is the following. When we established
that $a^i$ depend only on finitely many parameters, it follows that if 
$\dim\mathfrak{s}=\infty$, then there exists a symmetry of type $f\p_t$ with 
$f\neq0$. However this implies $\lambda_i=0$, i.e.\ $Q_1=0$.
 \end{remark}

An example of homogeneous Carrollian structure with strongly nondegenerate intrinsic torsion is given by generalization of Example \ref{exlmbda12} as follows.

 \begin{example}
For nonzero $\lambda_i\neq\lambda_j$ the Carrollian structure 
 $$
g=\sum_{i=1}^de^{\lambda_it}(dx^i)^2
 $$
is homogeneous: its symmetry algebra $\mathfrak{s}$ is generated by
 $$
\p_{x^i},\ \p_t-\sum_{i=1}^d\lambda_ix^i\p_{x^i}.
 $$
 \end{example}

In this example, instead of coframe $dx^i$ we could choose a left-invariant
coframe $\theta^i$ on Lie algebra $H$; this would give another homogeneous $M=H\times\R$. Next we consider the case of scalar $Q_1$, that is a degenerate 
conformally flat quadric, generalizing Example \ref{conflat}.

 \begin{example}
For $d>2$ the Carrollian structure 
 $$
g=e^{\lambda t}\sum_{i=1}^d(dx^i)^2
 $$
has symmetry algebra $\mathfrak{s}\simeq\mathfrak{so}(d+1,1)$ consisting of 
vector fields
 $$
\p_{x^i},\ x^i\p_{x^j}-x^j\p_{x^i},\ \xi:=\sum x^i\p_{x^i}-2a^{-1}\p_t,\
-\frac12\sum(x^i)^2\p_{x^k}+x^k\xi.
 $$
This is the conformal symmetry algebra.
The difference with the case $d=2$ is that the conformal symmetry in that dimension
is infinite-dimensional as we saw in Example \ref{conflat}.
 \end{example}

There are non-flat examples in higher dimensions $d>2$ with infinite symmetry, 
however, the largest of those we observed depends only on functions of 1 argument.

\section{Outlook}

In this paper we discussed differential invariants of Carrollian structures
from the viewpoint of intrinsic geometry. Rational absolute invariants are 
global, but are undefined for a proper Zariski closed subset in jets, which
corresponds to close-to-flat structures. To describe those structure a technique
of relative differential invariants, similar to application in \cite{MS},
can be developed. 

For example, the shear $\sigma$, introduced in \cite{NR}, is a complex-valued
relative invariant, vanishing of which is explored in \cite{BMN}. 
The absolute value $|\sigma|$ has the same weight as the other relative invariant
$\rho$ of \cite{NR}. It is important to understand the weight lattice of
scalar relative differential invariants in this problem.

Related is the invariant variational problem for Carrollian structures.
Note that every Carrollian spacetime $(M,g,K)$ has a canonical volume form:
given the volume form $\omega$ of $g$ on $\U$ and arbitrary Ehresmann connection
$\tau$, with $\tau(K)=1$, the top form $\Omega=\tau\wedge\omega$ is well-defined
(independent of the choice of Ehresmann connection). Thus an arbitrary 
invariant Lagrangian density $\lambda$, obtained from the generators
of the algebra of scalar absolute differential invariants, gives rise
to an invariant Lagrangian $L=\lambda\Omega$ and the corresponding action $S=\int L$.
Such field theories have been investigated in \cite{Bagchi} and our
approach leads to the most general actions governing Carrollian spacetimes.

\appendix

\section{Intrinsic curvature in other non-relativistic geometries}

In this appendix we generalize computations of fundamental invariants 
for Carrollian spacetimes to other structures. 
We refer to details on the formal theory of PDEs to \cite{S,KL1,BK}.

\subsection{Spencer cohomology for Galilean spacetimes}

Galilean and Carrollian spacetimes have the same abstract linear symmetry 
$\g\simeq\mathfrak{so}(U)\ltimes U$ but embedded differently into $\mathfrak{gl}(V)$ 
it gives different prolongations, spaces of invariants, etc.

The Galilean spacetime arises in the non-relativistic limit $c\to\infty$.
It is given by 1-form $\tau$ and a (positive definite) symmetric bi-vector 
$\gamma$ on the distribution $\U=\op{Ker}(\tau)\subset\T$. The flat model is:
 \begin{equation}\label{GS}
M=\R^{d+1}(x^1,\dots,x^d,t),\quad \tau=dt,\quad \gamma=\sum_1^d\p_{x^i}\cdot\p_{x^i}.
 \end{equation}
In this case again $\T\simeq\U\oplus\K$ but (contrary to Carrollian) 
$\U$ is the injective part of $\T$ while $\K=\T/\U$ is the projective part.
Simlarly, $\U^*$ is a projective part of $\T^*$ 
while $\K^*=\op{Ann}(\U)=\langle\tau\rangle$ is the injective part.

The space $\U$ is equipped with Riemannian metric, so we identify $\U^*=\U$ and 
$\mathfrak{so}(\U)=\Lambda^2\U^*$.
This latter is embedded in the projective part $\U\otimes\U^*$ of the injective part 
$\U\otimes\T^*\subset\mathfrak{gl}(\T)$, and we will again exploit such 
non-canonical summands in what follows.

The symbol of the Lie equation for symmetries (Lie algebra of the structure group) is 
$\g=\Lambda^2\U+\U\otimes\K^*\subset\U\otimes\T^*\subset\mathfrak{gl}(\T)$. 
Its prolongations are:
 $$
\g_k=\g^{(k-1)}=(\Lambda^2\U\otimes S^{k-1}\K^*)\oplus(\U\otimes S^k\K^*)
\subset \T\otimes S^k\T^*,\quad k>0,
 $$
and we complete this by $\g_0=\T$. Using the same notations and the arguments
as in Theorem \ref{Spencer1} we get:

 \begin{theorem}
The non-trivial Spencer cohomology groups for the Galilean spacetime are the following:
 \begin{gather*}
H^{0,0}=\T,\quad H^{0,1}=S^2\U\oplus\T^*,\quad
H^{0,k}=\Lambda^k\T^*\ \text{ for }\ 2\leq k\leq d+1,\\
H^{1,2}=\Gamma(2\pi_2),\quad H^{1,k}=\Gamma(\pi_2+\pi_k)\ \text{ for }\ 2<k<d,\quad H^{1,d}=\Gamma(\pi_2).
 \end{gather*}
 \end{theorem}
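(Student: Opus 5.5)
We compute the Spencer cohomology of the Galilean symbol row by row, following the pattern of Theorem \ref{Spencer1}. Throughout we use the non-canonical splittings $\T\simeq\U\oplus\K$ and $\T^*\simeq\U^*\oplus\K^*$, with $\U\simeq\U^*$ via the metric. We also use the description
\[
\g_k=(\Lambda^2\U\otimes S^{k-1}\K^*)\oplus(\U\otimes S^k\K^*).
\]
Since $\K^*$ is one-dimensional, $S^k\K^*\simeq\K^*{}^{\otimes k}$ is one-dimensional. Hence $\g_k\simeq\Lambda^2\U\oplus\U$ for every $k\ge1$.

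\textbf{Rows $i=0,1$.} $H^{0,0}=\T$ is immediate. For $H^{0,1}$ we compute the cokernel of $\delta:\g\to\T\otimes\T^*$. Decompose
\[
\T\otimes\T^*=(\U\otimes\U^*)\oplus(\U\otimes\K^*)\oplus(\K\otimes\U^*)\oplus(\K\otimes\K^*).
\]
The image of $\g$ is $\Lambda^2\U\oplus(\U\otimes\K^*)$. The cokernel is therefore $S^2\U\oplus(\K\otimes\U^*)\oplus(\K\otimes\K^*)$. The last two summands identify with $\K\otimes\T^*\simeq\T^*$, since $\K$ is a trivial line. These are the Lie equations for preserving $\gamma$ and $\tau$.

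For $H^{0,k}$ with $k\ge2$ we look at $\T\otimes\Lambda^k\T^*$ modulo $\delta(\g\otimes\Lambda^{k-1}\T^*)$. The $\U$-valued part is exhausted, as in the torsion computation for $G$-structures: $\g$ contains all of $\U\otimes\K^*$, and $\Lambda^2\U$ kills $\U\otimes\Lambda^2\U^*$ (the usual Levi-Civita argument). The $\K$-valued part, $\K\otimes\Lambda^k\T^*\simeq\Lambda^k\T^*$, is untouched. This reflects that the Galilean intrinsic torsion is $d\tau$ and its higher analogues. I would then check that $\delta$ from $\g_1\otimes\Lambda^{k-1}$ hits exactly the $\U$-part in each degree $k$. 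This gives $H^{0,k}=\Lambda^k\T^*$ for $2\le k\le d+1$.

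\textbf{Row $i=1$.} Here I would imitate the proof of Theorem \ref{Spencer1}, peeling off exact Koszul subcomplexes. The pieces $\U\otimes S^{\bullet}\K^*\otimes\Lambda^\bullet\K^*$ and $\Lambda^2\U\otimes S^\bullet\K^*\otimes\Lambda^\bullet\K^*$ form Koszul complexes in the single variable $\K^*$, which are exact except at the ends. After quotienting them out, the remaining complex is
\[
\Lambda^2\U\otimes\Lambda^{j}\U^*\;\longrightarrow\;\U\otimes\Lambda^{j+1}\U^*,
\]
the same complex whose kernel produced $H^{1,2}=\mathcal R=\Gamma(2\pi_2)$ in the Carrollian case.

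The higher groups $H^{1,k}$ are kernels of
\[
\Lambda^2\U\otimes\Lambda^k\U^*\to\U\otimes\Lambda^{k+1}\U^*
\]
modulo the images of $\U\otimes\Lambda^{k-1}\U^*$. By Pieri-type decompositions for $\mathfrak{so}(\U)$/$\mathfrak{sl}(\U)$ weights these are $\Gamma(\pi_2+\pi_k)$, terminating at $\Gamma(\pi_2)$ for $k=d$. Rows $i\ge2$ vanish because the prolongations stabilize into purely Koszul pieces.

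\textbf{Main obstacle.} I expect the main difficulty to be the careful bookkeeping in row $1$. The Koszul pieces must be exact in the right degrees; it matters that $\K^*$ is the injective part here, the opposite of the Carrollian case. The representation-theoretic identification of the kernels also needs care at the boundary degrees $k=d$ and $k=d+1$. Unlike the Carrollian case, no extra $\Gamma(\pi_2+\pi_{k-1})$ summand should appear. Verifying this absence requires checking that the connecting map from the $\U\otimes S^\bullet\K^*$ Koszul piece is surjective. I would confirm this by a dimension count via Euler characteristics of each row.
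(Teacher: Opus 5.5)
Your outline is correct and follows the route the paper intends; the paper only says the argument of Theorem~\ref{Spencer1} carries over. With $\T=\U\oplus\K$, the $\U\otimes S^\bullet\K^*$ Koszul piece is a subcomplex and the $\Lambda^2\U\otimes S^\bullet\K^*$ piece is the quotient, with cohomology only in $\U\otimes\Lambda^\bullet\U^*\subset\g_0\otimes\Lambda^\bullet\T^*$ and $\Lambda^2\U\otimes\Lambda^\bullet\U^*\subset\g_1\otimes\Lambda^\bullet\T^*$. So everything reduces to $\beta_j:\Lambda^2\U\otimes\Lambda^j\U^*\to\U\otimes\Lambda^{j+1}\U^*$: its kernel is $H^{1,j}$, and its surjectivity for $j\ge1$ gives $H^{0,j+1}=\K\otimes\Lambda^{j+1}\T^*$. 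For $j\ge2$ that surjectivity needs a Schur/Pieri argument after lowering an index, not the Levi-Civita argument.

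Two slips should be fixed. First, after the reduction nothing maps into $\Lambda^2\U\otimes\Lambda^k\U^*$, so $H^{1,k}$ is the kernel itself, not a kernel ``modulo images of $\U\otimes\Lambda^{k-1}\U^*$''. Second, the Carrollian-type summand $\Gamma(\pi_2+\pi_{k-1})$ inside $\Lambda^2\U\otimes\K^*\otimes\Lambda^{k-1}\U^*\subset\g_1\otimes\Lambda^k\T^*$ is absent because it is the image of $\Lambda^2\U\otimes\K^*\otimes\Lambda^{k-1}\U^*\subset\g_2\otimes\Lambda^{k-1}\T^*$, i.e.\ the prolongation of $\mathfrak{so}(\U)$ along $\K^*$. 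It does not vanish because of a connecting map from the $\U\otimes S^\bullet\K^*$ piece, and no Euler-characteristic check is needed.
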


In particular, the full curvature module $H^{\bullet,2}(\g)$ consists of 
the intrinsic torsion part $H^{0,2}$ and the intrinsic curvature part $H^{1,2}$.
The fundamental invariants are $d\tau\in H^{0,2}(\g)$ and an analog of the
curvature $R_g$ in $H^{1,2}(\g)$.
The system $\g$ is involutive after $(d+1)$ prolongations: $H^{i,\bullet}=0$ for $i>d+1$.

The affine characteristic variety $\op{Char}_\text{aff}=\K^*$ has both dimension and
degree equal to 1. The kernel bundle over it has fiber $\mathcal{K}_p=M(\g)_p$ 
at $p\in\K^*_\times$, where $M(\g)=\oplus_{k=0}^\infty\g_k^*$ is the symbolic module 
over the polynomial algebra $S\T^*$ and subscript $p$ means localization on $S\T^*_p$. 
In our case $\dim\mathcal{K}_p=d+\binom{d}2=\binom{d+1}2$.
Hence the general solution depends on at most $\binom{d+1}2$ functions of 1 argument.

This latter is indeed realizable for the flat model \eqref{GS}, where the symmetries are
 $$
\p_t,\quad f(t)\p_{x^k},\quad \psi_{ij}(t)(x^i\p_{x^j}- x^j\p_{x^i})\quad (1\leq k\leq d,1\leq i<j\leq d).
 $$
In fact, this largest symmetry can only happen for the flat model, the number of functions decreases otherwise.
In the case of nondegenerate torsion $\tau\wedge d\tau\neq0$ 
(sub-Riemannian metric on completely nonholonomic distribution) 
the structure becomes of finite type, so the symmetry algebra is finite-dimensional.

\subsection{Spencer cohomology for $D$-branes}

Note that in Galillean spacetime $\tau^2=dt\cdot dt$ is the metric on $\K$
and in Carrollian spacetime $K^2=\p_t\cdot\p_t$ is the metric on $\K^*$.
Thus there are metrics on both injective and projective parts of $\T$ 
(and hence also of $\T^*$).
The interpolating spacetime between the two is given by 
a subbundle $\K\subset\T=TM$ and the quotient bundle $\U=\T/\K$
(this notation is similar to Carrollian, but opposite to Gallilean),
as well as choices of metrics $\zeta$ and $h$ on these bundles 
(and hence on the duals). In other words, $\K$ is the injective part, while
$\U$ is projective part. Opposite for duals. The flat model is
 \begin{equation}\label{IS}
M=\R^{m+n}(x^1,\dots,x^m,y^1,\dots,y^n),\quad h=\sum_1^m dx^i\cdot dx^i,\quad \zeta=\sum_1^n \p_{y^i}\cdot\p_{y^i}.
 \end{equation}
The leaves of $\K$, also known as $D$-branes (obtained by contracting $\zeta$ 
with $\T$), are $\R^n(y^1,\dots,y^n)$ and the quotient space (space of leaves)
is $\R^m(x^1,\dots,x^m)$.

Both spaces $\K,\U$ are equipped with Riemannian metrics, so we identify 
$\K=\K^*$, $\U^*=\U$ as well as $\mathfrak{so}(\K)=\Lambda^2\K$, 
$\mathfrak{so}(\U)=\Lambda^2\U^*$, and the previous conventions.

The symbol of the Lie equation for symmetries (Lie algebra of the structure group) 
is $\g=\Lambda^2\K+\K\otimes\U^*+S^2\U^*\subset\mathfrak{gl}(\T)$. 
Its prolongations are:
 $$
\g_k=\g^{(k-1)}=(\Lambda^2\K\otimes S^{k-1}\U^*)\oplus(\K\otimes S^k\U^*)
\subset \T\otimes S^k\T^*,\quad k>1,
 $$
which we complete by $\g_0=\T$, $\g_1=\g$. 

 \begin{theorem}
The non-trivial Spencer cohomology groups for $\g$ are the following:
 \begin{gather*}
H^{0,0}=\T,\quad H^{0,1}=(S^2\K)\oplus(\K\otimes\U^*)\oplus(S^2\U^*),\\ 
H^{0,k}=(\Lambda^{k-1}\K\otimes S^2\U^*)\oplus(\Lambda^k\K\otimes\U^*)\ \text{ for }\ 1<k\leq n+1,\\
H^{1,k}=\Gamma_\K(\pi_2+\pi_k)\oplus
\sum_{i=0}^{k-2}\Lambda^i\K\otimes\Gamma_\U(\pi_2+\pi_{k-i}),
 \end{gather*}
where in the latter line $\Gamma_\U$ is the corresponding module over $A_{m-1}=\mathfrak{sl}(\U)$ and
similar $\Gamma_\K$ is over $A_{n-1}=\mathfrak{sl}(\K)$; we adopt the convention
$\Gamma_{A_q}(\pi_2+\pi_j)$ is $\Gamma_{A_q}(\pi_2)$ for $j=q$ and $0$ for $j>q$.
 \end{theorem}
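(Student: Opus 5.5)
The plan is to compute the Spencer cohomology of the $D$-brane symbol by the same filtration/Koszul-subcomplex method as in Theorem \ref{Spencer1}, using the non-canonical splitting $\T=\K\oplus\U$ (with $\K$ injective, $\U$ projective) and $\T^*=\K^*\oplus\U^*$. We identify $\K\simeq\K^*$ and $\U\simeq\U^*$ via $\zeta$ and $h$. This lets us grade every term $\g_i\otimes\Lambda^j\T^*$ by the number of $\U^*$ versus $\K^*$ factors. The Spencer differential $\delta$ is the symbol of de Rham, so it respects this bigrading up to the filtration shift, and on associated graded pieces it splits into Koszul-type differentials.

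\textbf{Low-degree groups.} $H^{0,0}=\T$ is immediate. For $H^{0,1}=\T\otimes\T^*/\g$ we decompose $\mathfrak{gl}(\T)$ into its $\K\otimes\K^*$, $\K\otimes\U^*$, $\U\otimes\K^*$ and $\U\otimes\U^*$ blocks and take the quotient by $\g=\Lambda^2\K+\K\otimes\U^*+S^2\U^*$ (here $S^2\U^*$ is read as the $\mathfrak{so}$-complement in $\U\otimes\U^*$ after identification).

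\textbf{The groups $H^{0,k}$ and $H^{1,k}$.} For the $i=0,1$ rows we write the complex with $\g_k=(\Lambda^2\K\otimes S^{k-1}\U^*)\oplus(\K\otimes S^k\U^*)$. The summand $(\K\otimes S^{\bullet}\U^*\otimes\Lambda^{\bullet}\U^*,\delta)$ forms an exact Koszul subcomplex, except in its lowest degree. The $\Lambda^2\K\otimes S^{\bullet}\U^*$ part pairs with the $\K^*$ factors of $\Lambda^{\bullet}\T^*=\bigoplus_i\Lambda^i\K^*\otimes\Lambda^{\bullet-i}\U^*$. We quotient successively by these exact subcomplexes. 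What survives in row $0$ is $(\Lambda^{k-1}\K\otimes S^2\U^*)\oplus(\Lambda^k\K\otimes\U^*)$, which is the $\K$-torsion (integrability of the distribution $\K$) together with derivatives of $h$ and $\zeta$ along leaves. This row vanishes for $k>n+1$ because $\Lambda^{k-1}\K=0$ there.

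In row $1$ we get a direct sum over $i$ of pieces $\Lambda^i\K^*\otimes\bigl(\ker$ of skew-symmetrization maps $\Lambda^2\U^*\otimes\Lambda^{k-i}\U^*\to\U^*\otimes\Lambda^{k-i+1}\U^*\bigr)$. Exactly as in Theorem \ref{Spencer1}, each such kernel is a Bianchi-type module $\Gamma_\U(\pi_2+\pi_{k-i})$. The extra summand $\Gamma_\K(\pi_2+\pi_k)$ comes from the analogous Bianchi kernel for the $\Lambda^2\K$ block, which is the leafwise curvature of $\zeta$. The dimension-dependent conventions (giving $\Gamma(\pi_2)$ at the top degree and zero beyond it) follow from the vanishing of $\Lambda^{j}$ for $j>\dim$.

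\textbf{Remaining rows and main obstacle.} For rows $i\ge2$ we must show vanishing, i.e.\ involutivity. I would argue this via the Koszul exactness of $S^{\bullet}\U^*\otimes\Lambda^{\bullet}\U^*$ tensored with fixed $\Lambda^{\bullet}\K^*$ factors. The main obstacle is the bookkeeping of the mixed terms, where the $\K^*$ and $\U^*$ form factors interact through the $\Lambda^2\K\otimes S^{k-1}\U^*$ part of $\g_k$. There one must check that the cross-differentials produce no extra cohomology and that the identification of the kernels with irreducible $\mathfrak{sl}$-modules is correct. I would first verify this by a dimension count (an Euler characteristic check) in small $(m,n)$ before trusting the general pattern.
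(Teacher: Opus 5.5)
Your route is the one the paper intends. The paper gives no separate argument for this theorem beyond the Koszul-reduction method of Theorem~\ref{Spencer1}. However, your proposal stops exactly at the step that carries the content, so there is a genuine gap.

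\textbf{The gap.} You leave open how one term of $\delta$ interacts with the $\U^*$-Koszul differential. This is the term on $\Lambda^2\K\otimes S^{p}\U^*\subset\g_{p+1}$ that moves the $\K^*$-slot of $\mathfrak{so}(\K)$ into the form and lands in $\K\otimes S^{p}\U^*\subset\g_p$. You propose to settle this by an Euler characteristic count, which cannot work. An Euler characteristic only sees the alternating sum in each total degree. So it neither proves $H^{i,\bullet}=0$ for $i\ge2$ nor rules out extra classes in row $1$ cancelling against classes elsewhere. An example would be $\Gamma_\K$-type classes carrying $\Lambda^b\U^*$ form factors with $b>0$.

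Your row-$0$ and row-$1$ claims silently depend on this unresolved point. The vanishing of the $\K$-valued component $\K\otimes\Lambda^k\K^*$ of row $0$ for $k\ge2$ is not a Koszul fact. It is the surjectivity of $\delta\colon\mathfrak{so}(\K)\otimes\Lambda^{k-1}\K^*\to\K\otimes\Lambda^k\K^*$ (Levi-Civita at $k=2$, then wedging). The survivors $(\Lambda^{k-1}\K\otimes S^2\U^*)\oplus(\Lambda^k\K\otimes\U^*)$ are asserted rather than derived.

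\textbf{The fix.} One observation makes everything immediate. Fix $\T=\K\oplus\U$, so that $\Lambda^\bullet\T^*=\Lambda^\bullet\K^*\otimes\Lambda^\bullet\U^*$. Then $\g=\bigl(\Lambda^2\K\oplus(\K\otimes\U^*)\bigr)\oplus\mathfrak{so}(\U)$ splits into a $\K$-valued and a $\U$-valued part. Since $\g_k$ is $\K$-valued for $k\ge2$ and $\delta$ never changes the value slot, the Spencer complex is a direct sum of two subcomplexes.

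The $\U$-valued subcomplex is $\mathfrak{so}(\U)\otimes\Lambda^\bullet\T^*\to\U\otimes\Lambda^{\bullet+1}\T^*$. Here $\Lambda^\bullet\K^*$ is a spectator, so this is $\Lambda^\bullet\K^*$ tensored with the Spencer complex of $\mathfrak{so}(\U)$. It contributes $\Lambda^k\K^*\otimes\U$ and $\Lambda^{k-1}\K^*\otimes S^2\U^*$ to $H^{0,k}$. It contributes $\Lambda^a\K^*\otimes\Gamma_\U(\pi_2+\pi_{k-a})$, for $a\le k-2$, to $H^{1,k}$.

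The $\K$-valued subcomplex consists of both towers in all form degrees. It is exactly the tensor product of two complexes:
\begin{itemize}
\item the two-term Spencer complex $\mathfrak{so}(\K)\otimes\Lambda^a\K^*\to\K\otimes\Lambda^{a+1}\K^*$;
\item the Koszul complex $(S^p\U^*\otimes\Lambda^b\U^*,\delta)$.
\end{itemize}
Writing $\delta=\delta_\K+\delta_\U$ as the symbol of de Rham in split coordinates, the troublesome cross-term is just $\delta_\K\otimes 1$. The Koszul complex is acyclic away from $(p,b)=(0,0)$. Hence the part with $(p,b)\neq(0,0)$ is an exact subcomplex that may be quotiented out in the paper's spirit (or apply K\"unneth). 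What remains is exactly the Spencer cohomology of $\mathfrak{so}(\K)$: $\K$ and $S^2\K^*$ in row $0$, and $\Gamma_\K(\pi_2+\pi_k)$ in $H^{1,k}$. After the identifications $\K\simeq\K^*$ and $\U\simeq\U^*$, this gives every listed group and $H^{i,\bullet}=0$ for $i\ge2$ at once.

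\textbf{Two smaller corrections.} First, the $\U\otimes\U^*$-block of $\g$ must be $\mathfrak{so}(\U)\simeq\Lambda^2\U^*$; the $S^2\U^*$ in the paper's display of $\g$ is a misprint. If your ``$\mathfrak{so}$-complement'' means the symmetric endomorphisms, then $H^{0,1}$ would contain $\Lambda^2\U^*$ instead of $S^2\U^*$. The stated $\g_k$ would also fail, since symmetric endomorphisms have nonzero prolongation. Second, $H^{0,2}$ consists of $L_Xh$ for $X\in\Gamma(\K)$ together with the Frobenius tensor of $\K$. First derivatives of $\zeta$ do not enter, because they are absorbed by $\mathfrak{so}(\K)$ and $\K\otimes\U^*$.
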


In particular, the full curvature module consists of the torsion and the curvature:
 $$
H^{\bullet,2}(\g)=(\K\otimes S^2\U^*)\oplus(\Lambda^2\K\otimes\U^*)\oplus
\Gamma_\K(2\pi_2)\oplus\Gamma_\U(2\pi_2).
 $$
The system $\g$ is involutive after $(n+1)$ prolongations ($H^{i,\bullet}=0$ for $i>n+1$).

The affine characteristic variety is $\op{Char}_\text{aff}=\U^*$ of degree 1, 
its kernel bundle has rank $\binom{n+1}2$.
Hence the general solution depends on at most $\binom{n+1}2$ functions of $m$ arguments.

This latter is indeed realizable for the flat model \eqref{IS}, where the symmetries are
 $$
f_0(x^1,\dots,x^n)\p_{y^k},\quad \psi_{ij}(x^1,\dots,x^n)(y^i\p_{y^j}-y^j\p_{y^i}),\quad 
\p_{x^k},\quad x^i\p_{x^j}- x^j\p_{x^i}.
 $$
In fact, this largest symmetry can only happen for the flat model.

\vspace{-3pt}
 \hspace{-20pt} {\hbox to 12cm{ \hrulefill }}
\vspace{5pt}

{\footnotesize \hspace{-10pt}\textsc{
Department of Mathematics and Statistics, UiT the Arctic University of Norway, 
Troms\o\ 9037, Norway}.

\hspace{-10pt} E-mails: \quad boris.kruglikov@uit.no,\quad eivind.schneider@uit.no,\quad wijnand.s.steneker@uit.no}
\vspace{-10pt}


\begin{thebibliography}{50}

\bibitem{ABL}
A.\ Ashtekar, C.\ Beetle, J.\ Lewandowski,
{\it Geometry of generic isolated horizons}, Class.\ Quantum Grav.\ {\bf 19}, 1195 (2002).

\bibitem{Bagchi}
A. Bagchi, R. Basu, A. Mehra, P. Nandi, {\it Field theories on null manifolds}, J. High Energ. Phys. {\bf 2020}, 141 (2020).

\bibitem{BM}
X.\ Bekaert, K.\ Morand, {\it Connections and dynamical trajectories 
in generalised Newton-Cartan gravity. II. An ambient perspective},
J.\ Math.\ Phys.\ {\bf 59}, 072503 (2018).

\bibitem{Blitz}
S.\ Blitz, D.\ McNutt, {\it Horizons that gyre and gimble: a differential characterization of null hypersurfaces}, Eur. Phys. J. C {\bf 84}, 561 (2024). 

\bibitem{BMN}
S.\ Blitz, D.\ McNutt, P.\ Nurowski, 
{\it Unique Carrollian manifolds emerging from Einstein spacetimes},
Class.\ Quantum Grav.\ {\bf 42}, 075006 (2025).

\bibitem{CST}
P.\,T.\ Chrusciel, S.\,J.\ Szybka, P.\ Tod, {\it Towards a classification 
of vacuum near-horizons geometries}, Class.\ Quantum Grav.\ {\bf 35}, 015002 (2018).

\bibitem{Ciambelli} 
L.\ Ciambelli, R.\,G.\ Leigh, C.\ Marteau, P.\ Marios Petropoulos, {\it Carroll Structures, Null Geometry and Conformal Isometries}, Phys. Rev. D {\bf 100}, 046010 (2019). 

\bibitem{CP}
L.\ Ciambelli, P.\ Jai-aksonc, {\it Foundations of Carrollian geometry},
Physics Reports {\bf 1188}, 1--51 (2026).

\bibitem{CG}
S.\ Curry, A.\,R.\ Gover, {\it An Introduction to Conformal Geometry and Tractor 
Calculus, with a view to Applications in General Relativity},
Asymptotic Analysis in General Relativity, 86--170, Cambridge Univ.\ Press (2017).

\bibitem{Duval} 
C. Duval, G.W. Gibbons, P.A. Horvathy, P.M. Zhang, {\it Carroll versus Newton and Galilei: two dual non-Einsteinian concepts of time}, Class. Quantum Grav. {\bf 31}, 085016 (2014). 

\bibitem{Duval2} 
C. Duval, G.W. Gibbons, P.A. Horvathy, {\it Conformal Carroll groups and BMS
symmetry}, Class. Quantum Grav. {\bf 31}, 092001 (2014). 

\bibitem{Figueroa} 
J.\ Figueroa-O'Farrill, {\it On the intrinsic torsion of spacetime structures}, arXiv:2009.01948 (2020). 

\bibitem{Figueroa1} 
J.\ Figueroa-O'Farrill, {\it Non-Lorentzian spacetimes}, Diff.\ Geom.\ Appl.\ 
{\bf 82}, 101894 (2022).

\bibitem{Figueroa2} 
J. Figueroa-O'Farrill, E.\ Have, S.\ Prohazka, J.\ Salzer, 
{\it Carrollian and celestial spaces at infinity}, J.\ High Energ.\ Phys.\ 
{\bf 2022}, 7 (2022).

\bibitem{FLTC}
A.\ Fino, T.\ Leistner, A.\ Taghavi-Chabert, {\it Optical geometries},
Ann.\ Sc.\ Norm.\ Super.\ Pisa Cl.\ Sci.\ {\bf 26}, no.\ 1, 341--396 (2025).

\bibitem{Gibbons}
G.\,W.\ Gibbons, {\it The Ashtekar-Hansen universal structure at spatial infinity 
is weakly pseudo-Carrollian}, arXiv:1902.09170 (2019).

\bibitem{Herfray}
Y.\ Herfray, {\it Carrollian manifolds and null infinity: a view from Cartan geometry}, Class.\ Quantum Grav.\ {\bf 39}, 215005 (2022).


\bibitem{BK}
B.~Kruglikov, {\it Poincare function for moduli of differential-geometric structures}, Moscow Math.\ Journ.\ {\bf 19}, no.4, 761--788 (2019).

\bibitem{KL1}
B.~Kruglikov, V.~Lychagin, {\it Geometry of differential equations},
in D.~Krupka, D.~Saunders (eds.), Handbook of Global Analysis, 725--771, Elsevier (2008).

\bibitem{KL2}
B.~Kruglikov, V.~Lychagin, {\it The global Lie-Tresse theorem}, Selecta Math.\ {\bf 22}, 1357--411, (2016).

\bibitem{KS}
B.\ Kruglikov, D.\ McNutt, E.\ Schneider, 
{\it Differential invariants of Kundt waves},
Class.\ Quantum Grav.\ {\bf 36}, 155011 (2019).


\bibitem{MS}
D.\ McNutt, E.\ Schneider, {\it Detecting horizons of symmetric black holes 
using relative differential invariants}, Class.\ Quantum Grav.\ {\bf 42}, 105011 (2025).

\bibitem{Morand}
K.\ Morand, {\it Embedding Galilean and Carrollian geometries. I. Gravitational waves}, J.\ Math.\ Phys.\ {\bf 61}, 082502 (2020).

\bibitem{NR}
P.\ Nurowski, D.\,C.\ Robinson, {\it Intrinsic geometry of a null hypersurface},
Class.\ Quantum Grav.\ {\bf 17}, 4065--4084 (2000).

\bibitem{O}
P.\ Olver, {\it Equivalence, Invariants, and Symmetry}, Cambridge University Press (1995).

\bibitem{PLJ}
T.\ Pawlowski, J.\ Lewandowski, J.\ Jezierski, {\it Spacetimes foliated by Killing horizons}, Class.\ Quantum Grav.\ {\bf 21}, 1237 (2004). 

\bibitem{S}
S.\ Sternberg, {\it Lectures on differential geometry}, N.J., Prentice-Hall (1964).

\end{thebibliography}
\end{document}